\documentclass{article}

\usepackage{endnotes}
\let\footnote=\endnote

\usepackage{tikz}
\usetikzlibrary{positioning}
\usetikzlibrary{calc}
\usetikzlibrary{fit}

\usepackage{subcaption, multirow}
\usepackage{amsmath, mathtools}

\usepackage{xltabular}
\PassOptionsToPackage{hyphens}{url}\usepackage[hypertexnames=false]{hyperref}

\usepackage{enumitem}

\usepackage{natbib}
 \bibpunct[, ]{(}{)}{,}{a}{}{,}

\usepackage{multibib}
\newcites{appendix}{References}

\usepackage{setspace}
\onehalfspacing
\usepackage{booktabs}

\usepackage[margin=1in]{geometry}
\geometry{letterpaper} 

\usepackage{apptools}

\usepackage{amsthm,amssymb}

\newtheorem{theorem}{Theorem}
\newtheorem{lemma}{Lemma}
\newtheorem{proposition}{Proposition}
\newtheorem{corollary}{Corollary}

\newtheorem{remark}{Remark}
\newtheorem{definition}{Definition}

\AtAppendix{\counterwithin{figure}{section}}
\AtAppendix{\counterwithin{table}{section}}
\AtAppendix{\counterwithin{equation}{section}}
\AtAppendix{\renewcommand{\arraystretch}{1}}
\hypersetup{
           breaklinks=true,
           colorlinks=false
        }
        
\clubpenalty 10000
\widowpenalty 10000
\predisplaypenalty 10000
\interdisplaylinepenalty 100
\displaywidowpenalty 1500
\postdisplaypenalty 0
\interlinepenalty 0
\brokenpenalty 500
\adjdemerits 100
\linepenalty 10
\doublehyphendemerits 10000
\finalhyphendemerits 5000
\hyphenpenalty 50
\exhyphenpenalty 50
\binoppenalty 150
\relpenalty 100
\interfootnotelinepenalty 100

\usepackage{algorithm}
\usepackage{algpseudocode}
\usepackage{tikz}

\usepackage{graphicx}%
\usepackage{multirow}%
\usepackage{amsmath,amssymb,amsfonts}%
\usepackage{mathrsfs}%
\usepackage[title]{appendix}%
\usepackage{xcolor}%
\usepackage{textcomp}%
\usepackage{manyfoot}%
\usepackage{booktabs}%
\usepackage{algorithm}%
\usepackage{algorithmicx}%
\usepackage{algpseudocode}%
\usepackage{listings}%
\usepackage{diagbox}
\usepackage{dsfont}
\usepackage{hyperref}
\usepackage{tikz}
\usepackage{pgfplots}
\usepackage{xcolor}%
\newcommand{\rev}[1]{{\color{black}#1}}
\usepackage{scalefnt}

\makeatletter
\makeatother

\newcommand{\tighttable}{%
  \setlength{\tabcolsep}{3pt}        
  \renewcommand{\arraystretch}{1} 
}

\title{Superadditivity-based valid inequalities and asymptotic bounds for the vehicle routing problem with stochastic demands}
\author{
Robin Legault\textsuperscript{1}\thanks{Corresponding author. \href{mailto:legault@mit.edu}{legault@mit.edu}} \and
Panca Jodiawan\textsuperscript{2} \and
Jean-François Côté\textsuperscript{3,4} \and
Leandro C. Coelho\textsuperscript{3,4}
}

\date{
\normalsize{
\textsuperscript{1}Operations Research Center, Massachusetts Institute of Technology, Cambridge, MA, USA\\
\textsuperscript{2}Department of Industrial Engineering and Management, Yuan Ze University, Taoyuan City, Taiwan\\
\textsuperscript{3}CIRRELT and Faculté des Sciences de l’Administration, Université Laval, Québec, Canada\\
\textsuperscript{4}Canada Research Chair in Integrated Logistics, Université Laval, Québec, Canada
}
}

\begin{document}

\maketitle

\begin{abstract}
Over the past thirty years, the vehicle routing problem with stochastic demands (VRPSD) has emerged as a canonical application of the integer L-shaped method. Recently, the disaggregated integer L-shaped (DL-shaped) method, which decomposes the recourse function by customer rather than treating it as an aggregate cost, has been proposed for the VRPSD under the classical detour-to-depot policy. However, its generalizability to other recourse policies has not been investigated. In this work, we identify the property that characterizes the validity of the DL-shaped reformulation: the superadditivity of the recourse function under path concatenation. We show that superadditivity holds under the optimal restocking policy, and rectify an incorrect argument from the original paper on the DL-shaped method, rigorously establishing its validity under the detour-to-depot policy. We then introduce a new family of valid inequalities, \rev{the edge-set cuts, which generalize the original DL-shaped cuts and are analytically shown to provide structural advantages over existing inequalities}. Building on these results, we develop a DL-shaped algorithm for the VRPSD with optimal restocking. Our algorithm outperforms existing methods in the high customer-to-vehicle ratio regime and solves 14 open single-route instances. \rev{We further derive asymptotic bounds on the optimal value of the VRPSD in a Euclidean setting with i.i.d.\ customers. This analysis reveals an asymptotic equivalence between the VRPSD and the split-delivery vehicle routing problem. It also yields tight bounds on the cost of requiring the total expected demand on each route not to exceed the vehicle capacity, resolving an open question in our asymptotic setting.}
\end{abstract}

\textit{Keywords: } Vehicle routing problem, Stochastic demands, Optimal restocking, Integer L-shaped method, Superadditivity

\section{Introduction} \label{sec:Intro}

Since the seminal work of \cite{tillman1969multiple}, the vehicle routing problem with stochastic demands (VRPSD) has been extensively studied \cite{oyola2017stochastic, oyola2018stochastic}. \rev{The VRPSD consists in designing routes for capacity-limited vehicles to serve customers with stochastic demands at minimum expected cost, where each demand realization is revealed only when the vehicle arrives at the corresponding customer. This paper considers the VRPSD in its classical form: homogeneous vehicles, a single depot, a single commodity, and no time windows.}

We study the problem under the \textit{a priori} optimization framework \cite{bertsimas1990priori}, in which routes are chosen before the demand realizations are observed \rev{and vehicles may perform restocking trips to the depot as they execute their routes. The available actions for managing capacity shortages are determined by a recourse policy. Under the classical \textit{detour-to-depot} (DTD) policy \cite{dror1989vehicle}, a vehicle returns to the depot only when a \textit{failure} occurs, that is, when the realized demand of the current customer exceeds its residual capacity. Under the \textit{optimal restocking} (OR) policy \cite{yee1980note, yang2000stochastic}, a vehicle may also perform a \textit{preventive} restocking trip to the depot between two consecutive customers.}

\paragraph{Exact algorithms for the VRPSD.} A rich body of exact algorithms has been developed to solve the VRPSD under the DTD and OR policies. These algorithms can be divided into integer L-shaped methods \cite{gendreau1995exact, hjorring1999new, laporte2002integer, jabali2014partial, louveaux2018exact, salavati2019exact, hoogendoorn2023improved, parada2024disaggregated, hoogendoorn2025evaluation} and branch-price-and-cut (BP\&C) methods \cite{christiansen2007branch, gauvin2014branch, florio2020new, florio2023recent}. Integer L-shaped approaches model the VRPSD as a two-stage stochastic program and rely on edge-flow or arc-flow formulations. Within a branch-and-cut (B\&C) framework, they iteratively generate valid inequalities that tighten the approximation of the recourse function and characterize feasible routes. On the other hand, BP\&C methods embed column generation inside a branch-and-bound (B\&B) tree. The master problem is a set-partitioning model whose columns represent complete routes. Each iteration solves a resource-constrained shortest-path problem \cite{irnich2005shortest} to identify a new route with negative reduced costs.

Although both integer L-shaped and BP\&C algorithms operate within a B\&B framework, their computational bottlenecks differ fundamentally, resulting in complementary performance profiles. In integer L-shaped algorithms, most of the effort is spent repeatedly solving the linear programming (LP) relaxation of the relaxed master problem, which becomes increasingly complex as more cuts are generated. Computational efficiency therefore relies on devising valid inequalities that rapidly tighten the master problem, thereby limiting the number of explored nodes. In comparison, the main computational burden in BP\&C methods lies in solving an NP-hard pricing problem \cite{ota2025hardness} at each iteration. In state-of-the-art BP\&C algorithms, this pricing subproblem is solved via a labeling algorithm, where each label represents a partial path from a customer back to the depot. The worst-case number of labels generated grows exponentially with the maximum number of customers that can be visited on a route, and the computational cost of extending each label scales linearly with both the vehicle capacity and the cardinality of the random demand supports. Recent developments in BP\&C methods for the VRPSD largely focus on techniques designed to limit label proliferation, notably dominance rules and reduced-cost bounds. The BP\&C methods of \cite{florio2020new, florio2023recent}, which leverage effective labeling strategies and benefit from the tight LP relaxation provided by the set-partitioning formulation of the VRPSD, achieve state-of-the-art results on instances with many short routes and moderate vehicle capacities. However, when optimal solutions consist of only a few routes, the decomposition benefit of column generation weakens significantly, as each column encapsulates a substantial portion of the decisions that form a feasible solution. Notably, when the solution comprises a single route, the set-partitioning formulation collapses to solving a single resource-constrained shortest-path problem. Reflecting a structural similarity with the traveling salesman problem (TSP), where the best-performing exact methods are based on edge-flow formulations strengthened by cutting planes \cite{applegate2006traveling}, integer L-shaped algorithms remain the state of the art for instances involving a small number of long routes.

\paragraph{Integer L-shaped algorithms.} Traditional integer L-shaped methods model recourse costs using a single auxiliary variable, which is progressively tightened by valid inequalities. In the VRPSD literature, these inequalities have typically been categorized as either \textit{optimality cuts} or \textit{lower bounding functionals} (LBFs), although both types can be described as optimality cuts from a broader Benders decomposition perspective \cite{hooker2023logic}. Optimality cuts denote valid inequalities that are tight at the current solution or explicitly exclude the current solution from the feasible domain, whereas LBFs are more general lower bounds that can be \textit{active}, \rev{i.e., impose a strictly positive bound on the recourse cost of a first-stage solution}, over a larger subset of the feasible domain. Optimality cuts alone are sufficient to ensure convergence to an optimal solution. However, they are typically active for a single integer feasible solution and thus do not substantially contribute to improving the global lower bound.

The significant progress of integer L-shaped methods over the last thirty years is principally attributable to the development of LBFs that either provide tighter bounds or apply to broader classes of solutions. Most contributions along these lines are based on the notion of a \textit{partial route}, which corresponds to a sequence of ordered and unordered sets of nodes that starts and ends with singletons comprising only the depot. Any route that visits the same customers and respects their partial ordering is then said to \textit{adhere} to the partial route. The \textit{partial route inequalities} were introduced by \cite{hjorring1999new} for the single-route VRPSD under the DTD policy, extended to multiple routes by \cite{laporte2002integer}, generalized by \cite{jabali2014partial}, and adapted to the OR policy by \cite{louveaux2018exact} and \cite{salavati2019exact}. Recently, \cite{hoogendoorn2023improved, hoogendoorn2025evaluation} departed from the common practice of representing the overall recourse cost by a single auxiliary variable. They developed the \textit{partial route-split} and \textit{multi-route-split} inequalities, which decompose the recourse cost into individual routes. These LBFs impose a lower bound that is enforced whenever the solution contains routes that adhere to each partial route in a collection.

Subsequently, \cite{parada2024disaggregated} proposed further decomposing the recourse cost by customer. They introduced new optimality cuts and LBFs called \textit{path cuts} (P-cuts) and \textit{set cuts} (S-cuts), which remain active across all solutions featuring specific paths or sets of consecutively visited customers. This contrasts with all the optimality cuts and partial route inequalities previously described in the literature, which are only active when a solution contains a set of routes that form a predefined partition of a given set of customers. The resulting B\&C algorithm, called the \textit{disaggregated integer L-shaped} (DL-shaped) method, empirically dominates the previous integer L-shaped methods under the DTD policy. However, in addition to relying on policy-specific lower bounds, the DL-shaped method requires specific conditions on the recourse function to be a valid exact algorithm. These conditions have only been developed for the DTD policy, leaving the generalizability of the DL-shaped method as an open question. More fundamentally, as we will show, the conditions have not been adequately characterized. We summarize our contributions as follows.

\paragraph{Contributions.}
\begin{enumerate}
    \item \textbf{Analysis of the DL-shaped method:} We provide a formal and general presentation of the DL-shaped reformulation of the VRPSD. We show that superadditivity of the recourse function under path concatenation is the structural property that characterizes the validity of this reformulation.
    \item \textbf{Recourse properties:} We study the properties of the DTD and OR recourse functions. We exhibit an incorrect argument in the proof of the main result of \cite{parada2024disaggregated}, and show that the validity of their results can be rigorously established based on superadditivity. We then show that the superadditivity property is always satisfied under the OR policy.
    \item \textbf{New valid inequalities:} We introduce the \textit{edge-set cuts} (E-cuts), a new family of DL-shaped cuts generalizing the P-cuts and the S-cuts. \rev{We study their relation to partial route-split inequalities and exhibit a family of instances where a single E-cut suffices for convergence while arbitrarily many existing valid inequalities are required. We propose an edge selection strategy for the E-cuts and develop lower bounds on the OR recourse function that enable their efficient instantiation under this policy.}
    \item \textbf{Algorithm and computational study:} Building on these new inequalities, we develop a DL-shaped algorithm for the VRPSD under the OR policy. Computational experiments on standard and new instances show that our algorithm outperforms existing methods \rev{on instances with a high customer-to-vehicle ratio}. In the single-route case, we increase the number of solved instances across two benchmark sets from 2 to 16.
    \item \rev{\textbf{Asymptotic analysis:} We provide an asymptotic analysis of the optimal value of the VRPSD in a Euclidean setting with i.i.d.\ customers. Under both the DTD and OR policies, we show that the asymptotic per-customer cost of the VRPSD matches that of the deterministic split-delivery VRP, and that requiring the total expected demand on each route not to exceed the vehicle capacity inflates this rate by a factor determined by a bin-packing constant. In particular, this factor is at most 3, which resolves an open question of \cite{hoogendoorn2025evaluation} in our asymptotic setting.}
\end{enumerate}
\rev{\paragraph{Outline.} The remainder of the paper is organized as follows. Section~\ref{sec:ProbForm} introduces the VRPSD. Section~\ref{sec:analysis_DL_shaped} presents the DL-shaped reformulation and characterizes the superadditivity condition for its validity. Section~\ref{sec:properties} studies the DTD and OR recourse functions. Section~\ref{sec:E-cuts} introduces the E-cuts. Section~\ref{sec:implementation} describes our DL-shaped algorithm, and Section~\ref{sec:experiments} reports the computational results. Section~\ref{sec:asymptotics} presents the asymptotic analysis. Section~\ref{sec:Conc} concludes.}

\section{Problem definition} \label{sec:ProbForm}

Section~\ref{subsec:notation} introduces our setup and basic definitions. \rev{Section~\ref{subsec:assumptions} states the modeling assumptions. Section~\ref{subsec:limitations} delineates the scope of the work and highlights the limitations of our framework.} Section~\ref{subsec:VRPSDform} presents the mathematical formulation of the VRPSD and defines the problem variants we study. Section~\ref{subsec:ExpRecCost} defines the recourse function under the OR and DTD policies.

\subsection{Setup and definitions}\label{subsec:notation}

\paragraph{Graph.} The problem is defined on a complete undirected graph $G := (N_0, E)$ with nodes $N_0 := \{0\} \cup N$, where $0$ represents the depot and $N := \{1, \ldots, n\}$ is the set of customers. The edge set is $E := \{\{i,j\} : i, j \in N_0,\, i \neq j\}$, and each edge $e \in E$ is assigned a non-negative traveling cost $c_e \rev{\in \mathbb{R}_{\geq 0}}$. These costs satisfy the triangle inequality. For any subset $S \subseteq N$, we write $E(S) := \{\{i,j\} \in E : i, j \in S\}$ for the edges with both endpoints in~$S$. For any node $i \in N_0$, we write $\delta(i) := \{e \in E : i \in e\}$ for the edges incident to~$i$, and for disjoint sets $A, B \subset N_0$, $\delta(A,B) := \{\{i,j\} \in E : i \in A,\, j \in B\}$.
 
\paragraph{Sequences of customers.} A \textit{path} is a sequence $p = (i_1, \dots, i_t)$ of distinct customers; we denote by $N(p) := \{i_1, \dots, i_t\} \subseteq N$ the set of nodes it visits. A sequence $p' = (j_1, \dots, j_{t'})$ is a \textit{subsequence} of~$p$ if there exist indices $1 \leq a_1 < \dots < a_{t'} \leq t$ such that $j_k = i_{a_k}$ for all $k \in \{1, \dots, t'\}$. For $1 \leq a \leq b \leq t$, the sequence $(i_a, \dots, i_b)$ is a \textit{subpath} of~$p$. A subsequence is thus obtained by removing arbitrary elements from a path, whereas a subpath is obtained by removing elements only from its beginning and end. We say that a path~$p'$ is a subsequence (respectively subpath) of a set of paths~$\bar{\mathcal{P}}$ if $p'$ is a subsequence (respectively subpath) of some $p \in \bar{\mathcal{P}}$.

\rev{
\subsection{Modeling assumptions}\label{subsec:assumptions}

\paragraph{Locations and demands.}
Customer locations are known, while demands are modeled as independent, non-negative random variables $\{\xi_i\}_{i \in N}$ with known distributions, supports $\{\Xi_i\}_{i \in N}$, and finite means $\{\mu_i\}_{i \in N}$. The realization of the demand~$\xi_i$ is revealed only upon the vehicle's arrival at customer~$i$. In general, no restrictions are imposed on the supports beyond non-negativity. Each $\Xi_i \subseteq \mathbb{R}_{\geq 0}$ may be discrete or continuous, and bounded or unbounded. For the OR policy, we assume in the algorithmic sections of the paper that the demands are integer-valued, i.e., $\Xi_i \subseteq \mathbb{Z}_{\geq 0}$, and denote the probability masses by $\rho^s_i := \mathbb{P}[\xi_i = s]$ for each $s \in \Xi_i$. This allows the recourse function to be expressed through finite-state dynamic programming. The OR policy remains well-defined for general non-negative demand distributions with finite means. 
 
\paragraph{Fleet and routes.}
We consider a single depot and a fleet of identical vehicles, each with capacity~$Q \in \mathbb{R}_{> 0}$. A set of routes, each forming a simple cycle on~$G$ starting and ending at the depot, must be selected, with the requirement that each customer $i \in N$ appears in exactly one route. Each route is permitted to cover a set of customers whose total expected demand does not exceed~$fQ$, where $f \in \mathbb{R}_{>0}$ is called the \textit{load factor}. For feasibility, we assume $\max_{i \in N}\{\mu_i\} \leq fQ$, so that each customer can be assigned to a route. In addition, one may impose explicit restrictions on the number of routes that can appear in a solution. We denote by $M \subseteq \mathbb{N}$ the set of admissible route counts, with $|M|=1$ if the number of routes is fixed, and $M = \{1, \dots, n\}$ when the number of routes is not restricted beyond the load factor requirement. 
 
\paragraph{Recourse structure.}
We consider \emph{route-based} recourse policies~\cite{dror1986stochastic}, in which vehicles serve disjoint sets of customers and the recourse actions available to a vehicle depend only on the demands observed along its own route. Under this assumption, the total expected recourse cost decomposes additively by route.
 
\paragraph{Recourse cost parameters.}
A \emph{failure} occurs when the residual capacity of a vehicle does not suffice to serve a customer's demand; a \emph{preventive return} is a voluntary round trip to the depot to restock before visiting the next customer. For the sake of generality, we assume that in addition to travel distance, fixed penalty terms $b^F \geq 0$ and $b^P \geq 0$ with $b^P \leq b^F$ can be applied for failures and preventive returns, respectively. For each customer $i \in N$, the failure cost is then $c^F_i := b^F + 2\,c_{0,i}$. For each pair of consecutively visited customers $i,j \in N$, the preventive return cost is $c^P_{i,j} := b^P + c_{0,i} + c_{0,j} - c_{i,j}$.

\subsection{Scope and limitations}\label{subsec:limitations}
 
Two assumptions deserve further comment. First, the independence of the demands is essential to our framework. It implies that, conditional on the demands observed along a route, the distribution of the remaining unserved demands coincides with their marginal distribution. This property is central to the proof of superadditivity of the OR recourse function (Theorem~\ref{thm:Q_superadditive}), where a concatenated path is decomposed into a prefix and a suffix, and the cost incurred on the suffix is bounded below by the OR recourse cost of that suffix as a standalone path. Under correlated demands, the information revealed along the prefix may help guide restocking decisions later in the route, so this lower-bound comparison need not remain valid. Moreover, under correlated demands, the OR recursion would generally need to track not only the residual vehicle capacity but also the conditional distribution of the remaining demands. While it may be possible to identify sufficient conditions that preserve superadditivity in this setting, we leave this direction for future work.

Second, the route-based recourse framework excludes VRPSD settings with coordinated recourse decisions across vehicles, such as the overlapped-routing paradigm of~\cite{ledvina2022new}, in which the same customer can appear on multiple routes, and the recourse assignment depends jointly on all realized demands. In such models, the recourse cost of serving a path or a subset of customers cannot be characterized independently of the rest of the solution. Consequently, the path- and set-indexed coefficients that appear in the DL-shaped cuts are no longer well defined as functions of those objects alone. More fundamentally, the superadditivity property (Definition~\ref{def:superadditivity}) relies on the concatenation of paths and on the recourse function being a well-defined map from individual paths to costs. When routes may overlap, this concatenation operation no longer suffices to express the recourse structure, making route separability a foundational assumption of this work. Generalizing superadditivity to broader mappings from routing solutions to recourse costs is a natural direction for extending the applicability of the DL-shaped methodology.
}

\subsection{Mathematical formulation of the VRPSD} \label{subsec:VRPSDform}
 
We adopt an edge-flow formulation of the problem. For each edge $e \in E$, an integer variable~$x_e$ represents the number of times $e$~is traversed in the a priori solution. For edges $e \in E(N)$ between customers, $x_e$ is binary; for edges $e \in \delta(0)$ incident to the depot, $x_e \in \{0,1,2\}$, with $x_e = 2$ indicating that a vehicle serves a single customer. A binary variable~$z_m$ indicates whether the solution comprises $m$~routes. The objective is to minimize the sum of the first-stage cost (travel distance along the planned routes) and the second-stage cost (expected recourse cost). The VRPSD is formulated as follows:
\begingroup
\allowdisplaybreaks
\begin{alignat}{2}
\min & \sum_{e \in E} c_{e}x_{e} + \mathcal{Q}(x) \label{objective} \\
\text{s.t.} &     \sum_{e \in \delta(0)}x_{e} = 2\sum_{m \in M}mz_{m}, \label{utilizedVeh} \\
    & \sum_{e \in \delta(i)}x_{e} = 2, &  \rev{\forall} i \in N, \label{flow_cons} \\
    & \sum_{e\in E(S)}x_{e} \leq |S| - \left \lceil \frac{1}{fQ}\sum_{i \in S}\mu_{i} \right \rceil, &   \rev{\forall} S \subseteq N,\label{cap_cut} \\
    & \sum_{m \in M} z_{m} = 1, \label{vehassign} \\
    & x_{e} \in \{0, 1, 2\}, & \rev{\forall} e \in \delta(0), \label{x_depot} \\
    & x_{e} \in \{0, 1\}, & \rev{\forall} e \in E(N), \label{x_custs} \\
    & z_{m} \in \{0, 1\}, & \rev{\forall} m \in M. \label{z_val}
\end{alignat}
\endgroup
 
Constraint~\eqref{utilizedVeh} controls the number of routes in the solution. Constraints~\eqref{flow_cons} guarantee that each customer is visited exactly once. The rounded capacity inequalities~\eqref{cap_cut} eliminate subtours and routes with total expected demand exceeding the allowed load factor. Constraint~\eqref{vehassign} selects exactly one number of routes, and constraints~\eqref{x_depot}--\eqref{z_val} define the domain of the variables.
 
\rev{\paragraph{Variants of the VRPSD.}
Formulation~\eqref{objective}--\eqref{z_val} is parameterized by the set~$M$ of allowed number of routes and the load factor~$f$. Following \cite{hoogendoorn2025evaluation}, we distinguish four variants according to whether the \emph{fixed route constraint} (FRC) and the \emph{expected capacity constraints} (ECCs) are enforced. The FRC imposes a specific number of routes ($|M|=1$), and the ECCs require that the total expected demand assigned to each route does not exceed the vehicle capacity ($f=1$). We call these variants \emph{ECC-FRC}, \emph{ECC}, \emph{FRC}, and \emph{basic}, depending on whether both, only the ECCs, only the FRC, or neither of these conditions is enforced. In the ECC and basic variants, the route-count is not restricted ($M=\{1,\dots,n\}$). In the FRC and basic variants, the right-hand side of constraints~\eqref{cap_cut} simplifies to $|S|-1$, so that the rounded capacity inequalities reduce to subtour-elimination constraints. Throughout the paper, we consider the general formulation~\eqref{objective}--\eqref{z_val} unless otherwise specified. 
}

\subsection{Recourse function}\label{subsec:ExpRecCost}

\rev{A solution $(x,z)$ satisfying constraints \eqref{utilizedVeh}--\eqref{z_val}} defines a set of undirected cycles on $G$, each specifying a sequence of distinct customers that may be visited in either orientation. We characterize such a solution comprising $m$ routes as a collection of paths $\mathcal{P}(x) = \{p_1, \dots, p_m\}$, each taken in an arbitrary orientation, where $\{N(p_1), \dots, N(p_m)\}$ forms a partition of $N$. Denote by $\bar{\mathcal{Q}}_{p}$ the expected recourse cost of the route $(0,p,0)$ specified by path $p=(i_1,\dots,i_t)$, and let $\mathcal{Q}_{p} := \min\{\bar{\mathcal{Q}}_{(i_1,\dots,i_t)}, \bar{\mathcal{Q}}_{(i_t,\dots,i_1)}\}$ be its expected recourse cost in its best orientation. The recourse function evaluated at $x$ is $\mathcal{Q}(x) := \sum_{p \in \mathcal{P}(x)}\mathcal{Q}_p$, \rev{which is independent of the choice of orientations in $\mathcal{P}(x)$}. \rev{Next, we present the expected recourse cost of a route under the OR and DTD policies.}
 
\subsubsection{Optimal restocking policy}\label{sec:OR_pol}

\rev{Under the OR policy, after serving a customer, the vehicle may either perform a preventive restocking trip or proceed directly to the next customer. Here, for exact evaluation of the expected recourse cost by finite-state dynamic programming, we assume that the demands and vehicle capacity are integer-valued.} Denote by $\Psi(s,q) := \max\{\lceil(s - q)/Q\rceil,0\}$ the number of restocking trips required to fulfill a demand of~$s$ given residual capacity~$q$. For a route $(0, p, 0) = (i_0, i_{1}, \dots, i_{t}, i_{t+1})$, the expected recourse cost-to-go $F^{p}_{i_{j}}(q)$ when the vehicle leaves node $i_{j}$, for $j \in \{0,\dots,t{-}1\}$, with residual capacity $q \in \{0,\dots,Q\}$, is given by the following Bellman equation:
\begin{equation}
    F^p_{i_j}(q) := \min
    \begin{cases} \label{def:cost-to-go}
        H^p_{i_{j+1}}(q) := \sum\limits_{s \in \Xi_{i_{j+1}}} \left[c_{i_{j+1}}^F\Psi(s,q) + F^p_{i_{j+1}}\left(\Psi(s,q)Q + q -s  \right) \right]\rho_{i_{j+1}}^s, \\
        H^{*p}_{i_j,i_{j+1}} := c_{i_j, i_{j+1}}^P+H^p_{i_{j+1}}(Q),
    \end{cases}
\end{equation}
with the boundary condition $F^p_{i_{t}}(q):=0 \ \forall q \in \{0,\dots,Q\}$. This boundary reflects that, after serving the last customer $i_{t}$, the vehicle travels back to the depot as planned in the a priori solution, which does not incur a recourse cost. The terms $H^p_{i_{j+1}}(q)$ and $H^{*p}_{i_j,i_{j+1}}$ respectively denote the expected cost-to-go of proceeding directly to customer $i_{j+1}$ and of making a preventive return beforehand. Since travel distances respect the triangle inequality, proceeding to the next customer is always preferable when the vehicle has full capacity, i.e., $F^p_{i_j}(Q) = H^p_{i_{j+1}}(Q)$. We denote by $\bar{\mathcal{Q}}^{\text{OR}}_p:=F^p_{i_0}(Q)$ the expected recourse cost of route $(0,p,0)$ under the OR policy.

\begin{remark}\label{rem:OR-continuous}
    The recursion \eqref{def:cost-to-go} generalizes to random demands with continuous supports by replacing the state space $q \in \{0, \dots, Q\}$ by $q \in [0, Q]$ and defining $H^p_{i_{j+1}}(q) := \mathbb{E}\!\left[c^F_{i_{j+1}} \Psi(\xi_{i_{j+1}}, q) + F^p_{i_{j+1}}\big(\Psi(\xi_{i_{j+1}}, q)Q + q - \xi_{i_{j+1}}\big)\right]$. The monotonicity and superadditivity properties of the OR recourse function extend to this setting; Lemma \ref{lemma:monotonicity_cost_to_go} carries over by replacing in its proof the sums over $\Xi_{i_j}$ with the corresponding expectations, and Theorem \ref{thm:Q_superadditive} then applies verbatim.
\end{remark}
 
\subsubsection{Detour-to-depot policy}\label{sec:DTD_pol}
 
Under the DTD policy, \rev{the vehicle restocks only after a failure}. The expected recourse cost $\bar{\mathcal{Q}}^{\text{DTD}}_p$ of route $(0, p, 0) = (i_0, i_{1}, \dots, i_{t}, i_{t+1})$ is defined as:
\begin{equation}\label{rec:DTD}
    \bar{\mathcal{Q}}^{\text{DTD}}_p := \sum_{j=1}^t\sum_{l=1}^{+\infty}\mathbb{P}\left[ \sum_{k=1}^{j-1}\xi_{i_k} \leq lQ < \sum_{k=1}^j\xi_{i_k} \right]c^F_{i_{j}},
\end{equation}
where the factor multiplying each coefficient $c^F_{i_{j}}$ corresponds to the expected number of failures at customer $i_j$.

\rev{
\begin{remark}\label{rem:DTD_restriction_of_OR}
    The DTD policy can equivalently be framed as the restriction of the OR policy in which preventive restocking trips are prohibited, i.e., $H^{*p}_{i_j,i_{j+1}} = +\infty$. In particular, $\bar{\mathcal{Q}}^{\text{OR}}_p \leq \bar{\mathcal{Q}}^{\text{DTD}}_p$ holds for any path $p$.
\end{remark}
}

\section{The disaggregated integer L-shaped method} \label{sec:analysis_DL_shaped}
We start by reviewing the standard integer L-shaped method in Section \ref{sec:analysis_DL_shaped:L-shaped}. We then present the DL-shaped reformulation of the VRPSD in Section \ref{sec:analysis_DL_shaped:DL-shaped}.

\subsection{Integer L-shaped method}\label{sec:analysis_DL_shaped:L-shaped}
A textbook integer L-shaped method for the VRPSD (as described in \cite{laporte1998solving}) can be implemented from the edge-flow formulation \eqref{objective}--\eqref{z_val} by replacing the recourse function $\mathcal{Q}(x)$ by a single auxiliary variable $\Theta \geq 0$. The problem can then be solved through B\&C by \rev{separating, at each node of the B\&B tree, the rounded capacity inequalities \eqref{cap_cut} that are violated at the current solution}, and adding, at each integer node with solution $(x^{\nu}, \Theta^{\nu})$ such that $\Theta^{\nu} < \mathcal{Q}(x^{\nu})$, the classical optimality cut:
\begin{equation} \label{classic_opt_cut}
    \Theta \geq L + (\mathcal{Q}(x^{\nu}) - L) \left( \sum_{\substack{e \in E(N) \\x^{\nu}_{e}=1}}x_{e} - \sum_{\substack{e \in E(N)}}x^{\nu}_{e} + 1 \right),
\end{equation}
where $L \geq 0$ is a global lower bound on the expected recourse. Inequality \eqref{classic_opt_cut} reduces to $\Theta \geq \mathcal{Q}(x^{\nu})$ if $x=x^{\nu}$, and imposes a lower bound of at most $L$ on $\Theta$ for any integer solution $x \neq x^{\nu}$. It is thus a valid lower bound on the recourse function $\mathcal{Q}$ for any feasible first-stage solution, and is tight at $x=x^{\nu}$. Since the feasible domain of the VRPSD has finite cardinality, these properties suffice to guarantee the finite convergence of the integer L-shaped method to an optimal solution. In practice, this cut has several drawbacks. First, it is only valid for a fixed number of vehicles ($|M|=1$), as discussed in \cite{hoogendoorn2025evaluation}. Furthermore, it is active for a single integer solution, which makes it inefficient at improving the LP relaxation of the master problem. To accelerate convergence, practical implementations such as those of \cite{laporte2002integer}, \cite{jabali2014partial}, \cite{louveaux2018exact}, and \cite{salavati2019exact} also generate LBFs. We refer the reader to the recent works of \cite{hoogendoorn2023improved, hoogendoorn2025evaluation} for a complete review and corrected versions of the LBFs implemented in the integer L-shaped algorithms from the literature. 

\subsection{Disaggregated integer L-shaped method}\label{sec:analysis_DL_shaped:DL-shaped}
The DL-shaped method, introduced in \cite{parada2024disaggregated}, decomposes the auxiliary variable $\Theta$ into customer-specific variables $\{\theta_{i}\}_{i \in N}$. We introduce notation, then formulate its master problem. Let $\mathcal{P} := \{\text{$p$ : $\sum_{i \in N(p)} \mu_i \leq fQ$}\}$ denote the set of feasible paths under the vehicle capacity $Q$ and the load factor $f$. For each path $p = (i_1, \dots, i_t) \in \mathcal{P}$, define the \rev{linear functional $p(x) := \sum_{j=1}^{t-1} x_{i_j, i_{j+1}}$, which evaluates the total flow along its edges for any $x \in \mathbb{R}^E$}. For each set of customers $S \subseteq N$, define $\Pi(S,m) := \left\{ \{p_k\}_{k=1}^m \subseteq \rev{\mathcal{P}} : \{N(p_k)\}_{k=1}^m \text{ partitions $S$} \right\}$ as the set of all partitions of $S$ into exactly $m$ feasible paths, $m(S) := \min\{m \in \mathbb{N} : \Pi(S,m) \neq \emptyset\}$ as the minimum number of vehicles required to cover $S$, and $\mathcal{L}(S, m) := \min_{ \pi \in \Pi(S,m)} \sum_{p \in \pi} \mathcal{Q}_{p}$ as the smallest recourse cost achievable by partitioning $S$ into exactly $m$ feasible paths. Let $\mathcal{S} \subseteq \{S \subseteq N\}$ be any collection of subsets of customers, and for each $S \in \mathcal{S}$, let $1 \leq m_S \leq m(S)$ and $0 \leq \mathcal{L}_{S}\leq \mathcal{L}(S, m_S)$ be fixed values. \rev{The DL-shaped reformulation of the VRPSD~\eqref{objective}--\eqref{z_val} yields the following master problem:}
\begingroup
\allowdisplaybreaks
\begin{alignat}{2}
    (\text{MP}) \quad \min & \sum_{e \in E} c_{e}x_{e} + \sum_{i \in N} \theta_{i} \label{DL:obj} \\
\text{s.t.}\ & \text{\eqref{utilizedVeh}--\eqref{z_val}}, \nonumber \\
    & \sum_{i \in N(p)} \theta_{i} \geq \mathcal{Q}_p  \left( p(x)-|N(p)|+2 \right), & \rev{\forall} p \in \mathcal{P}, \label{DL:p-cut}  \\
    & \sum_{i \in S} \theta_{i} \geq \mathcal{L}_{S}\left( \sum_{e \in E(S)}x_{e} - |S| + m_S + 1 \right), &\ \rev{\forall} S \in \mathcal{S},\label{DL:s-cut} \\
    & \theta_{i} \geq 0, & \rev{\forall} i \in N. \label{DL:theta}
\end{alignat}
\endgroup

The P-cut \eqref{DL:p-cut} associated with a path $p \in \mathcal{P}$ imposes that the total recourse cost attributed to the customers of $N(p)$ be at least the recourse cost $\mathcal{Q}_p$ of route $(0,p,0)$ if $p$ is a subpath of $\mathcal{P}(x)$. By contrast, the S-cut \eqref{DL:s-cut} associated with a set $S \in \mathcal{S}$ imposes that the total recourse cost attributed to the customers of $S$ be at least $\mathcal{L}_S$ when there exists $m_S$ subpaths of $\mathcal{P}(x)$ that partition the customers of $S$. The P-cuts and the S-cuts respectively act as optimality cuts and LBFs. In Theorem \ref{thm:validity_DL-shaped}, we characterize the validity of the DL-shaped reformulation based on the following superadditivity property. 

\begin{definition}[Superadditivity property]\label{def:superadditivity}
    An instance of the VRPSD satisfies the superadditivity property if $\mathcal{Q}_{(p_1,p_2)}\geq\mathcal{Q}_{p_1}+\mathcal{Q}_{p_2} \ \forall\,p_1,p_2\in\mathcal P$ such that $(p_1,p_2) \in \mathcal{P}$.
\end{definition}

\begin{theorem}\label{thm:validity_DL-shaped}
\rev{The DL-shaped model MP is a valid reformulation of \eqref{objective}--\eqref{z_val} for every $M \subseteq \{1,\dots,n\}$} if and only if the superadditivity property holds.
\end{theorem}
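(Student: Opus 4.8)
The plan is to first translate ``valid reformulation'' into a per-solution statement. Replacing $\mathcal{Q}(x)$ by $\sum_{i\in N}\theta_i$ in \eqref{DL:obj}, and noting that for any feasible $x$ with route set $\mathcal{P}(x)=\{q_1,\dots,q_m\}$ the P-cut \eqref{DL:p-cut} taken with $p=q_k$ is binding and reads $\sum_{i\in N(q_k)}\theta_i\ge\mathcal{Q}_{q_k}$, summation over $k$ forces $\sum_{i\in N}\theta_i\ge\mathcal{Q}(x)$ at every feasible $x$. Hence MP can only over-estimate the recourse, and it is a valid reformulation precisely when, for every feasible $x$, some $\theta\ge0$ satisfying \eqref{DL:p-cut}--\eqref{DL:theta} attains $\sum_{i\in N}\theta_i=\mathcal{Q}(x)$. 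A short sign analysis then pins down which cuts matter: since all customer--customer edge variables are binary, \eqref{DL:p-cut} has positive right-hand side only when $p$ is a subpath of some $q_k$ (the bound is then $\mathcal{Q}_p$), and \eqref{DL:s-cut} only when $S$ is partitioned by exactly $m_S=m(S)$ subpaths of $\mathcal{P}(x)$ (the bound is then $\mathcal{L}_S$); every other cut is slack for all $\theta\ge0$.

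\textbf{Sufficiency.} Assuming superadditivity, I would build such a $\theta$ route by route. Fixing an orientation $q_k=(i_1,\dots,i_t)$, set $\theta_{i_1}:=\mathcal{Q}_{(i_1)}$ and $\theta_{i_j}:=\mathcal{Q}_{(i_1,\dots,i_j)}-\mathcal{Q}_{(i_1,\dots,i_{j-1})}$ for $j\ge2$. These telescope, so $\sum_{j=1}^{t}\theta_{i_j}=\mathcal{Q}_{q_k}$ and, for $1\le a\le b\le t$, the sum $\sum_{j=a}^{b}\theta_{i_j}$ equals $\mathcal{Q}_{(i_1,\dots,i_b)}$ when $a=1$ and $\mathcal{Q}_{(i_1,\dots,i_b)}-\mathcal{Q}_{(i_1,\dots,i_{a-1})}$ otherwise. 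Applying Definition~\ref{def:superadditivity} to the concatenation that forms $(i_1,\dots,i_b)\in\mathcal{P}$ bounds this below by $\mathcal{Q}_{(i_a,\dots,i_b)}$ --- exactly the active P-cut on that subpath --- and the case $a=b=j$ together with $\mathcal{Q}_{(i_j)}\ge0$ gives $\theta_{i_j}\ge0$. Summing the telescoped totals over all routes gives $\sum_{i\in N}\theta_i=\mathcal{Q}(x)$. Finally, an active S-cut corresponds to a partition of $S$ into $m(S)$ feasible subpaths $r_1,\dots,r_{m(S)}$ of $\mathcal{P}(x)$; since each $\sum_{i\in N(r_l)}\theta_i\ge\mathcal{Q}_{r_l}$, we get $\sum_{i\in S}\theta_i\ge\sum_l\mathcal{Q}_{r_l}\ge\mathcal{L}(S,m(S))=\mathcal{L}(S,m_S)\ge\mathcal{L}_S$ by definition of $\mathcal{L}(S,\cdot)$ and the choice of $\mathcal{L}_S$. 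So $\theta$ is feasible, proving validity.

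\textbf{Necessity.} Conversely, suppose superadditivity fails: $\mathcal{Q}_{(p_1,p_2)}<\mathcal{Q}_{p_1}+\mathcal{Q}_{p_2}$ for some $p_1,p_2\in\mathcal{P}$ with $(p_1,p_2)\in\mathcal{P}$. I would take the feasible solution $x$ whose routes are $(0,p_1,p_2,0)$ and one singleton route per remaining customer (feasible: \eqref{cap_cut} for $N(p_1)\cup N(p_2)$ holds because $(p_1,p_2)\in\mathcal{P}$, and the resulting fleet size is admissible, e.g.\ for an unlimited fleet). At this $x$ the P-cuts for $p_1$ and for $p_2$ are both active, and their sum yields $\sum_{i\in N(p_1)\cup N(p_2)}\theta_i\ge\mathcal{Q}_{p_1}+\mathcal{Q}_{p_2}>\mathcal{Q}_{(p_1,p_2)}$; since the cuts on the singleton variables contribute exactly $\sum\mathcal{Q}_{(i)}$ and do not interact with the rest (this uses $\mathcal{S}=\emptyset$ and the absence of edges between the route and the singletons), no feasible $\theta$ reaches $\sum_{i\in N}\theta_i=\mathcal{Q}(x)$. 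Thus MP strictly over-estimates, and is not a valid reformulation --- already with $\mathcal{S}=\emptyset$, so the defect lies with the P-cuts themselves.

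\textbf{Anticipated obstacle.} The conceptually non-trivial ingredient is the sign analysis: the identity ``edges $=|S|-(\text{number of components})$'' for a disjoint union of paths, combined with the fact that those components form a feasible partition of $S$ and hence number at least $m(S)\ge m_S$, is what rigorously replaces the informal notion of a cut being ``active on a family of solutions'' and, in particular, rules out S-cut right-hand sides larger than $\mathcal{L}_S$. The remaining delicate points are bookkeeping: fixing the precise meaning of ``valid reformulation'' --- I would adopt the per-solution version above, which is what a branch-and-cut method needs at every node --- and checking feasibility (including fleet size) of the witness solution in the necessity argument; under the weaker ``equal optimal value'' reading one would in addition have to perturb the edge costs $c_e$ so as to make the witness solution optimal. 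The telescoping construction, by contrast, is essentially immediate once superadditivity is available, so the difficulty is organizational rather than technical.
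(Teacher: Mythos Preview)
Your proposal is correct and follows essentially the same approach as the paper: the sufficiency direction uses the identical telescoping assignment $\theta_{i_j}=\mathcal{Q}_{(i_1,\dots,i_j)}-\mathcal{Q}_{(i_1,\dots,i_{j-1})}$ and verifies the P-cuts via superadditivity and the S-cuts by summing the P-cut bounds of the subpaths, while the necessity direction exploits the same observation that the P-cuts for $p_1$ and $p_2$ together force $\sum_{i\in N(p_1)\cup N(p_2)}\theta_i>\mathcal{Q}_{(p_1,p_2)}$ at any solution containing the concatenated route. The only cosmetic differences are that the paper leaves the remaining routes $p_2,\dots,p_m$ arbitrary whereas you instantiate them as singletons (and note that taking $\mathcal{S}=\emptyset$ already exhibits the failure), and that you make the sign analysis for the S-cut factor---the edge--component count forcing $m_S=m(S)$ when the cut is active---more explicit than the paper's brief ``by definition of $m_S$''.
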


\begin{proof}
\rev{Necessity is shown by considering the unrestricted route-count case $M=\{1,\ldots,n\}$, where there exists a feasible solution for which the DL-shaped reformulation overestimates the recourse cost whenever the superadditivity property does not hold.} We then show that the superadditivity property is sufficient for MP to be a valid reformulation of \eqref{objective}--\eqref{z_val}.

\paragraph{\textcolor{black}{Necessity.}}\rev{Consider the case $M = \{1, \dots, n\}$, i.e., the number of routes is not restricted beyond the load factor requirement.} Assume the recourse function is not superadditive, i.e., there exist two paths \(p',p''\in\mathcal P\) whose concatenation \(p_1:=(p',p'')\in\mathcal P\) satisfies $\mathcal{Q}_{p_1} < \mathcal{Q}_{p'}+\mathcal{Q}_{p''}$. Consider the first-stage solution $(x^{\nu}, z^{\nu})$ \rev{that comprises route $(0, p_1, 0)$ and assigns every remaining customer to its own route, i.e., $\mathcal{P}(x^{\nu}) = \{p_1\}\cup \{(i) : i \in N\setminus N(p_1)\}$. This solution is feasible since $p_1 \in \mathcal{P}$ by assumption, each singleton path also respects the load factor as $\mu_i \leq fQ$ for every $i \in N$, and the resulting number of routes $m = n + 1 - |N(p_1)|$ lies in $M$. Its recourse cost is $\mathcal{Q}(x^{\nu}) = \sum_{p \in \mathcal{P}(x^{\nu})}\mathcal{Q}_p = \mathcal{Q}_{p_1} + \sum_{i\in N{\setminus}N(p_1)} \mathcal{Q}_{(i)}$.}

Since $p'(x^{\nu})=|N(p')|-1$ and $p''(x^{\nu})=|N(p'')|-1$ by construction, \rev{the factor multiplying the coefficient $\mathcal{Q}_p$ on the right-hand side of \eqref{DL:p-cut} evaluates to $1$ at $x=x^{\nu}$ for each path $p \in \{p', p''\}$. For the first-stage solution $(x^{\nu}, z^{\nu})$, summing the P-cuts of paths $p'$ and $p''$ yields the following lower bound on the auxiliary variables $\{\theta_i\}_{i \in N(p_1)}$:}
\begin{equation}\label{eq:thetaLB_p}
    \sum_{i\in N(p_1)}\theta_i
      = \sum_{i\in N(p')}\theta_i + \sum_{i\in N(p'')}\theta_i
      \geq \mathcal{Q}_{p'} + \mathcal{Q}_{p''} > \mathcal{Q}_{p_1}.
\end{equation}

\rev{For any singleton path $p = (i)$, $i \in N$, we have $p(x)=0$ and $|N(p)|=1$, so the P-cut \eqref{DL:p-cut} simplifies to the unconditional bound $\theta_i \geq \mathcal{Q}_{(i)}$. By adding inequality \eqref{eq:thetaLB_p} and the P-cuts of every singleton path $p \in \{(i) : i \in N\setminus N(p_1)\}$, we obtain that the recourse cost} of the first-stage solution $(x^{\nu},z^{\nu})$ is strictly higher for MP than for the original formulation \eqref{objective}--\eqref{z_val}:
\begin{equation*}
\sum_{i \in N} \theta_i = \rev{\sum_{i\in N(p_1)}\theta_i + \sum_{i\in N{\setminus}N(p_1)}\theta_i > \mathcal{Q}_{p_1} + \sum_{i\in N{\setminus}N(p_1)} \mathcal{Q}_{(i)}} = \mathcal{Q}(x^{\nu}).
\end{equation*}

\paragraph{Sufficiency.} Assume the recourse function is superadditive.
Let $(x^{\nu},z^{\nu})$ be an integer-feasible solution of the first-stage
constraints \eqref{utilizedVeh}--\eqref{z_val}, and let $\mathcal{P}(x^{\nu}) = \{p_1, \dots, p_m\} \in \Pi(N,m)$ be the set of paths it forms. The original formulation’s recourse evaluates to:
\begin{equation}\label{eq:VRPSD_recourse}
    \mathcal{Q}(x^{\nu}) = \sum_{j=1}^{m}\mathcal{Q}_{p_j}.
\end{equation}

For every path $p_j \in \mathcal{P}(x^{\nu})$, the P-cut \eqref{DL:p-cut} reads
$\sum_{i\in N(p_j)}\theta_i \ge \mathcal{Q}_{p_j}$. Summing over $j=1,\dots,m$ gives the lower bound:
\begin{equation}\label{eq:DL_LB}
    \sum_{i\in N}\theta_i \geq \mathcal{Q}(x^{\nu}).
\end{equation}

It follows from \eqref{eq:VRPSD_recourse} and \eqref{eq:DL_LB} that the solution $(x^{\nu},z^{\nu})$ yields an objective value that is not smaller for MP than for the original formulation. To conclude the proof, it remains to exhibit an assignment $\{\theta^{\nu}_i\}_{i \in N}$ of the auxiliary variables that satisfies constraints \eqref{DL:p-cut}--\eqref{DL:theta} and sums to $\mathcal{Q}(x^{\nu})$. 

For each path $p_j=(i^{j}_{1},\dots,i^{j}_{t_j}) \in \mathcal{P}(x^{\nu})$, we set each $\theta^{\nu}_{i^j_k}$, $k \in \{1,\dots,t_j\}$, to the incremental contribution of customer $i^j_k$ to the expected recourse of the partial path $(i^j_{1},\dots,i^j_{k-1})$:
\[
    \theta^{\nu}_{i^{j}_{k}} = \Delta_{i^{j}_{k}}
      := \mathcal{Q}_{(i^{j}_{1},\dots,i^{j}_{k})}
         -\mathcal{Q}_{(i^{j}_{1},\dots,i^{j}_{k-1})},
\]
where $\mathcal{Q}_{()} = 0$. Superadditivity implies that each incremental contribution satisfies $\Delta_{i^{j}_{k}} \geq 0$, and the non-negativity constraints \eqref{DL:theta} are thus respected. \rev{Summing the auxiliary variables of the customers of path $p_j$} yields:
\[
    \sum_{i\in N(p_j)}\theta^{\nu}_i
      = \sum_{k=1}^{t_j}\Delta_{i^{j}_{k}}
      \rev{= \sum_{k=1}^{t_j} \left( \mathcal{Q}_{(i^{j}_{1},\dots,i^{j}_{k})}
         -\mathcal{Q}_{(i^{j}_{1},\dots,i^{j}_{k-1})} \right) } 
      = \mathcal{Q}_{p_j},
\]
and we indeed verify that $\sum_{i \in N} \theta^{\nu}_i = \sum_{j=1}^m \mathcal{Q}_{p_j} = \mathcal{Q}(x^{\nu})$. From there, we show that no P-cut \eqref{DL:p-cut} is violated at $(x,\theta) = (x^{\nu},\theta^{\nu})$. Consider an arbitrary path $p \in \mathcal{P}$. First, if there is no path $p_j \in \mathcal{P}(x^{\nu})$ of which $p$ is a subpath, then $p(x^{\nu})\leq |N(p)|-2$. The right-hand side of the P-cut is then non-positive at $x=x^{\nu}$, and the P-cut is trivially satisfied. Otherwise, $p=(i^j_a,\dots,i^j_b)$ for some $j \in \{1,\dots,m\}$ and $1 \leq a \leq b \leq t_j$, and the right-hand side is $\mathcal{Q}_p$. The left-hand side evaluates to:
\begingroup
\allowdisplaybreaks
\begin{align*}
    \sum_{i \in N(p)} \theta^{\nu}_i &= \sum_{k=a}^b \Delta_{i^{j}_{k}}\\
    &= \sum_{k=a}^b \left(\mathcal{Q}_{(i^{j}_{1},\dots,i^{j}_{k})} -\mathcal{Q}_{(i^{j}_{1},\dots,i^{j}_{k-1})}\right)\\
    &= \mathcal{Q}_{(i^{j}_{1},\dots,i^{j}_{b})} -\mathcal{Q}_{(i^{j}_{1},\dots,i^{j}_{a-1})} \\
    &\geq \mathcal{Q}_{p},
\end{align*}
\endgroup
where the inequality applies superadditivity to the split $(i^{j}_{1},\dots,i^{j}_{a-1}), (i^{j}_{a},\dots,i^{j}_{b})$, which gives:
\begin{align*}
    & \mathcal{Q}_{(i^j_1, \dots, i^j_b)} \geq \mathcal{Q}_{(i^{j}_{1},\dots,i^{j}_{a-1})} + \mathcal{Q}_{(i^{j}_{a},\dots,i^{j}_{b})}\\
    \iff & \mathcal{Q}_{(i^j_1, \dots, i^j_b)} - \mathcal{Q}_{(i^{j}_{1},\dots,i^{j}_{a-1})} \geq \mathcal{Q}_{(i^{j}_{a},\dots,i^{j}_{b})} \\
    \iff & \mathcal{Q}_{(i^j_1, \dots, i^j_b)} - \mathcal{Q}_{(i^{j}_{1},\dots,i^{j}_{a-1})} \geq \mathcal{Q}_{p}.
\end{align*}

Therefore, all the P-cuts \eqref{DL:p-cut} are satisfied. It remains to show that the S-cuts \eqref{DL:s-cut} are respected at $(x,\theta) = (x^{\nu},\theta^{\nu})$. Consider any set $S \in \mathcal{S}$. The factor multiplying $\mathcal{L}_{S}$ in the S-cut can be non-positive at $x = x^{\nu}$, in which case the cut is trivially respected. Otherwise, by definition of $m_S$, this factor is exactly one, meaning that the customers of $S$ are covered by exactly $m_S$ paths of $\mathcal{P}(x^{\nu})$, and appear consecutively in each of these paths. In this case, we denote by $\{\bar{p}_{j_k}\}_{k=1}^{m_S} \in \Pi(S,m_S)$ the subpaths that form a partition of $S$ in the solution, where for each $k \in \{1,\dots,m_S\}$, $j_k \in \{1,\dots,m\}$ is the index of the path of $\mathcal{P}(x^{\nu})$ of which $\bar{p}_{j_k}$ is a subpath. By construction, $\bar{p}_{j_k}(x^{\nu})=|N(\bar{p}_{j_k})|-1$ for each of these subpaths, so that summing their P-cuts at $x=x^{\nu}$ gives:
\begin{align*}
    \sum_{i \in S}\theta_i = \sum_{k=1}^{m_S} \sum_{i \in N(\bar{p}_{j_k})} \theta_i \geq \sum_{k=1}^{m_S} \mathcal{Q}_{\bar{p}_{j_k}} \geq \mathcal{L}_{S},
\end{align*}
where the last inequality follows from the definition of $\mathcal{L}_{S}$ and the fact that $\{\bar{p}_{j_k}\}_{k=1}^{m_S} \in \Pi(S,m_S)$. Therefore, the S-cuts are implied by the P-cuts, which have been shown to hold at $(x,\theta) = (x^{\nu},\theta^{\nu})$. 
\end{proof}

\rev{
\begin{remark}\label{rem:necessity}
Theorem~\ref{thm:validity_DL-shaped} establishes the superadditivity property as a necessary and sufficient condition for MP to be a valid reformulation of the VRPSD across all choices of the route-count restriction \(M \subseteq \{1,\ldots,n\}\). For a specific choice of $M$, the DL-shaped reformulation can remain valid even without superadditivity. This is the case for $M = \{n\}$, where the only feasible first-stage solution $(x^{\nu}, z^{\nu})$ consists of the singleton paths $\mathcal{P}(x^{\nu}) = \{(i)\}_{i \in N}$. Indeed, since $x^{\nu}_e = 0$ for every customer edge $e \in E(N)$, the resulting flow is $p(x^{\nu}) = 0$ for each path $p \in \mathcal{P}$. At $x = x^{\nu}$, the only non-trivial P-cuts \eqref{DL:p-cut} are thus those of the singleton paths, which impose $\theta_i \geq \mathcal{Q}_{(i)}$ for each $i \in N$. Regardless of whether the superadditivity property holds, the assignment $\theta^{\nu}_i = \mathcal{Q}_{(i)}$ then satisfies all P-cuts (and hence all S-cuts) and recovers the correct recourse cost: $\sum_{i \in N}\theta^{\nu}_i = \sum_{i \in N}\mathcal{Q}_{(i)} = \mathcal{Q}(x^{\nu})$. In contrast, the proof of Theorem~\ref{thm:validity_DL-shaped} establishes necessity from the case $M = \{1, \dots, n\}$, which is also the case relevant to the basic and ECC variants, making the superadditivity property necessary for the validity of the DL-shaped reformulation in both of these variants.
\end{remark}
}

\section{Properties of the recourse function}\label{sec:properties}
In Section \ref{sec:properties:monotonicity}, we review the key structural property of the DTD recourse function studied in \cite{parada2024disaggregated}, and evaluate its implications for the OR policy. We then show that the proof of the main result of \cite{parada2024disaggregated} relies on an incorrect argument. In Section \ref{sec:properties:superadditivity}, we provide a corrected proof of this result by leveraging the superadditivity property. Finally, we show that the superadditivity property is always satisfied under the OR policy.

\subsection{Monotonicity properties}\label{sec:properties:monotonicity}
\begin{definition}[Monotonicity property \cite{parada2024disaggregated}]\label{def:mono_prop_parada}
    An instance of the VRPSD satisfies the monotonicity property if, for any set $S \subseteq N$ with $|S| \geq 2$ such that $\sum_{i \in S}\mu_i \leq fQ$, for any pair $(a,b) \in S\times S$ with $a\neq b$, and any subset $\widetilde{S} \subseteq S \setminus \{a,b\}$, the following inequality holds for any positive integer $l \in \mathbb{N}$:
    \begin{equation}\label{ineq:mono_prop}
        \mathbb{P}\left[\sum_{i \in \widetilde{S} \cup \{a\} } \xi_i \leq lQ < \sum_{i \in \widetilde{S} \cup \{a,b\} } \xi_i \right] \geq \mathbb{P}\left[\sum_{i \in \widetilde{S} } \xi_i \leq lQ < \sum_{i \in \widetilde{S} \cup \{b\} } \xi_i \right]
    \end{equation}
\end{definition}

Under the monotonicity property, \cite{parada2024disaggregated} showed that the expected recourse cost of a feasible path is higher than or equal to that of any of its subsequences. \rev{This result, which we recall in Proposition \ref{prop:DTD_monotonic_subseq}, is true and is not affected by the incorrect argument used in \cite{parada2024disaggregated}}. However, we show in Proposition \ref{prop:OR_non_monotonic_subseq} that it does not extend to the OR policy.

\begin{proposition}[\cite{parada2024disaggregated}, Proposition 2] \label{prop:DTD_monotonic_subseq} Under the monotonicity property, any subsequence $p'$ of a path $p \in \mathcal{P}$ satisfies $\mathcal{Q}^{\text{DTD}}_{p'} \leq \mathcal{Q}^{\text{DTD}}_{p}$.
\end{proposition}

\begin{proposition}\label{prop:OR_non_monotonic_subseq}
Even if the monotonicity property holds, there may exist a subsequence $p'$ of a path $p \in \mathcal{P}$ such that $\mathcal{Q}^{\text{OR}}_{p'} > \mathcal{Q}^{\text{OR}}_{p}$.
\end{proposition}
\begin{proof} 
Consider the instance illustrated in Figure \ref{fig:non_monotonic_recourse}, with vehicle capacity $Q=20$, penalties $b^F=b^P=0$, and random demands $\xi_1,\xi_3 \sim \text{Poisson}(\lambda=9)$ and $\xi_2 \sim \text{Poisson}(\lambda=1)$. By Proposition 3 of \cite{parada2024disaggregated}, this instance satisfies the monotonicity property. Yet, we can verify that path $p=(1,2,3)$ and its subsequence $p'=(1,3)$ satisfy $\mathcal{Q}^{\text{OR}}_p \approx 3.25 < 6.08 \approx \mathcal{Q}^{\text{OR}}_{p'}$.
\end{proof}

\begin{figure}[H]
  \centering
  \begin{tikzpicture}[scale=1.705,
      every node/.style={font=\small},
      nd/.style ={circle,   draw, thick, minimum size=5mm, inner sep=0pt},
      rt/.style ={rectangle,draw, thick, minimum size=5mm, inner sep=0pt}]

    \node[rt] (N0) at ( 0,  0)   {0};
    \node[nd] (N1) at (-1.2, 1.8) {1};
    \node[nd] (N2) at ( 0,  0.8) {2};
    \node[nd] (N3) at ( 1.2, 1.8) {3};

    \draw (N0) to[bend left=15]  node[left]                  {$12$} (N1);
    \draw (N0) --                 node[left,  inner sep=0.8pt] {$2$} (N2);
    \draw (N0) to[bend right=15] node[right]                 {$12$} (N3);
    \draw (N2) to[bend right=15] node[right]                 {$10$} (N1);
    \draw (N2) to[bend left=15]  node[left]                  {$10$} (N3);
    \draw (N1) --                 node[above]                  {$8$} (N3);

  \end{tikzpicture}
  \caption{Instance with non-monotone OR recourse function \rev{(edges labeled with their cost $c_e$)}}
  \label{fig:non_monotonic_recourse}
\end{figure}

The main result of \cite{parada2024disaggregated}, given in their Proposition 10, states that the P-cuts \eqref{DL:p-cut} are valid optimality cuts under the DTD recourse policy if the monotonicity property holds. The proof of this proposition leverages Proposition~\ref{prop:DTD_monotonic_subseq} by claiming that, if a path $p \in \mathcal{P}(x^{\nu})$ satisfies $\mathcal{Q}^{\text{DTD}}_p \geq \mathcal{Q}^{\text{DTD}}_{p'}$ for any of its subsequences $p'$, then there exists a valid assignment $\{\theta_i^{\nu}\}_{i \in N(p)}$ of the variables $\{\theta_i\}_{i \in N(p)}$ such that $\sum_{i \in N(p)} \theta_i^{\nu} = \mathcal{Q}^{\text{DTD}}_p$. In Proposition \ref{prop:monotonicity_subsequence_not_sufficient}, we show this claim to be false. \rev{We exhibit a path $p=(p_1,p_2)$ whose expected recourse cost under the DTD policy exceeds that of any of its subsequences, while the P-cuts of paths $p_1$ and $p_2$ jointly impose $\sum_{i \in N(p)} \theta_i > \mathcal{Q}^{\text{DTD}}_{p}$ if $p \in \mathcal{P}(x^{\nu})$.}

\begin{proposition}\label{prop:monotonicity_subsequence_not_sufficient} A path $p \in \mathcal{P}$ may satisfy $\mathcal{Q}^{\text{DTD}}_p \geq \mathcal{Q}^{\text{DTD}}_{p'}$ for each of its subsequences $p'$, while $\mathcal{Q}^{\text{DTD}}_{p} < \mathcal{Q}^{\text{DTD}}_{p_1} + \mathcal{Q}^{\text{DTD}}_{p_2}$ for some $p_1, p_2 \in \mathcal{P}$ such that $p=(p_1,p_2)$.
\end{proposition}


\rev{
\begin{proof}
Assume all customers $i \in N=\{1,2,3,4,5,6,7,8\}$ have i.i.d.\ demands $\xi_i \sim \text{Bern}(\mu)$, and let the vehicle capacity be $Q=3$. In this setting, under the DTD policy, a failure occurs at the $j$-th customer served by a vehicle if and only if its demand is $\xi_j=1$, and the total demand of the first $j-1$ customers is a nonzero multiple of 3. For $j\leq 8$, we can thus write the expected number of failures at the $j$-th served customer as $\phi_j := \mu \cdot \mathbb{P}[\mathrm{Bin}(j-1,\mu) \in \{3,6\} ]$. This gives $\phi_1=\phi_2=\phi_3=0$, $\phi_4=\mu^4$, $\phi_5=4\mu^4(1-\mu)$, $\phi_6=10\mu^4(1-\mu)^2$, $\phi_7=20\mu^4(1-\mu)^3+\mu^7$, and $\phi_8=35\mu^4(1-\mu)^4+7\mu^7(1-\mu)$.

Now, assume $N$ is partitioned into $A=\{1,4,5,8\}$ and $B=\{2,3,6,7\}$, where every customer $i \in A$ is at distance $c_{0,i} = \frac{1}{2}$ from the depot and every customer $i \in B$ is located at the depot, i.e. $c_{0,i} = 0$. Take $b^F=0$, so that the failure cost at customer $i \in N$ is $c^F_i = 1$ if $i \in A$ and $c^F_i = 0$ otherwise. Denoting by $\boldsymbol{1}_{A}$ the indicator function of set $A$, i.e., $\boldsymbol{1}_{A}(i)=1$ if $i \in A$ and $\boldsymbol{1}_{A}(i)=0$ otherwise, we can then write the expected DTD recourse of any path $p'=(i_1,\dots,i_t)$ as $\bar{\mathcal{Q}}_{p'}^{\text{DTD}} = \sum_{j=1}^t \phi_j \boldsymbol{1}_{A}(i_j)$.

Take $\mu=\frac{9}{10}$ and a load factor $f \geq 8\mu/Q$ so that $p=(1,2,3,4,5,6,7,8)$ is feasible, i.e., $p \in \mathcal{P}$, and take $p_1=(1,2,3,4)$ and $p_2=(5,6,7,8)$. In either orientation, paths $p_1$ and $p_2$ visit customers of $A$ in positions $1$ and $4$, whereas path $p$ visits customers of $A$ in positions $1$, $4$, $5$, and $8$. This gives:
\begin{align*}
    \mathcal{Q}^{\mathrm{DTD}}_{p} = \phi_4+\phi_5+\phi_8 = \frac{62782209}{50000000} < \frac{6561}{5000} = 2\phi_4 = \mathcal{Q}^{\mathrm{DTD}}_{p_1} + \mathcal{Q}^{\mathrm{DTD}}_{p_2}.
\end{align*}

It remains to show that any subsequence $p'$ of $p$ satisfies $\mathcal{Q}^{\mathrm{DTD}}_{p'}\leq \mathcal{Q}^{\mathrm{DTD}}_p$. Since this holds with equality for $p'=p$, assume that $p'$ is a proper subsequence of $p$. For $\mu=\frac{9}{10}$, we directly verify that $\phi_4 > \phi_7 > \phi_8 > \phi_5 > \phi_6$, and $\phi_5 + \phi_8 > \phi_7$; in particular, $\mathcal{Q}^{\mathrm{DTD}}_p > \phi_4+\phi_7$. If $|N(p')\cap A|\leq 2$, then $\mathcal{Q}^{\mathrm{DTD}}_{p'}\leq \phi_4+\phi_7<\mathcal{Q}^{\mathrm{DTD}}_p$, since $\phi_4$ and $\phi_7$ are the two largest values among $\{\phi_j\}_{j=1}^8$. Now suppose $|N(p')\cap A|\geq 3$. We consider three cases and show that $\mathcal{Q}^{\mathrm{DTD}}_{p'} \leq \phi_4 + \phi_7$ holds in each of them. (i) If $|N(p')\cap A|=3$, then $p'$ is a subsequence of $(1,2,3,4,5,6,7,8)$ that visits three out of the four customers of $A=\{1,4,5,8\}$. Therefore, $p'$ has an orientation in which a customer of $A$ occupies position 1. The two remaining customers of $N(p')\cap A$ contribute at most $\phi_4+\phi_7$ to the expected recourse cost, so $\mathcal{Q}^{\text{DTD}}_{p'} \leq \phi_1 + \phi_4 + \phi_7 = \phi_4 + \phi_7$. (ii) If $A \subseteq N(p')$ and $|N(p') \cap \{2,3\}| \leq 1$, then visiting $p'$ in its canonical orientation places customer 1 in position 1 and customer 4 in position 2 or 3. Since $\phi_1=\phi_2=\phi_3=0$, the probability of failure at these two customers of $A$ is zero. As before, two customers of $A$ remain, hence $\mathcal{Q}^{\text{DTD}}_{p'} \leq \phi_4 + \phi_7$. (iii) If $A \subseteq N(p')$ and $|N(p') \cap \{6,7\}| \leq 1$, then visiting $p'$ in its reverse orientation places customer 8 in position 1 and customer 5 in position 2 or 3, both with zero failure probability, so again $\mathcal{Q}^{\text{DTD}}_{p'} \leq \phi_4 + \phi_7$. This exhausts the relevant cases, as otherwise either $|N(p')\cap A|\leq 2$ or $p'=p$. 
\end{proof}
}

\rev{By invalidating the proof of Proposition 10 of \cite{parada2024disaggregated}, Proposition~\ref{prop:monotonicity_subsequence_not_sufficient} raises the question of whether the monotonicity property is a sufficient condition for the validity of the P-cuts~\eqref{DL:p-cut} under the DTD policy. We answer this question affirmatively in the next section, by establishing the validity via the superadditivity property rather than via subsequence monotonicity.}

\subsection{Superadditivity properties}\label{sec:properties:superadditivity}
In this section, we apply Theorem \ref{thm:validity_DL-shaped} to verify the validity of the DL-shaped reformulation under the DTD and OR policies. In Theorem \ref{thm:Q_DTD_superadditive}, we show that the monotonicity property implies the superadditivity property for the DTD policy, hence that Proposition 10 of \cite{parada2024disaggregated} holds despite the incorrect argument of its original proof. We then show in Theorem \ref{thm:Q_superadditive} that the superadditivity property always holds under the OR policy.

\begin{theorem}\label{thm:Q_DTD_superadditive} 
    The superadditivity property holds for $\mathcal{Q}=\mathcal{Q}^{\text{DTD}}$ if the monotonicity property is satisfied.
\end{theorem}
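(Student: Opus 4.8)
The plan is to first establish the stronger, orientation-specific inequality
\[
\bar{\mathcal{Q}}^{\text{DTD}}_{(q_1,q_2)} \;\ge\; \bar{\mathcal{Q}}^{\text{DTD}}_{q_1} + \bar{\mathcal{Q}}^{\text{DTD}}_{q_2}
\qquad \text{for all } q_1,q_2 \text{ with } (q_1,q_2)\in\mathcal{P},
\]
and then deduce Definition~\ref{def:superadditivity} by a short case analysis on orientations. Writing $\tilde p$ for the reversal of a path $p$, note that reversing preserves feasibility and, in its own orientation, the recourse cost, so $\tilde p_1,\tilde p_2\in\mathcal P$ with $\bar{\mathcal Q}^{\text{DTD}}_{\tilde p_i}\ge\mathcal Q_{p_i}$, and $(\tilde p_2,\tilde p_1)\in\mathcal P$ since it visits the same customer set as $(p_1,p_2)$. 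Now $\mathcal Q_{(p_1,p_2)}$ equals either $\bar{\mathcal Q}^{\text{DTD}}_{(p_1,p_2)}$ or $\bar{\mathcal Q}^{\text{DTD}}_{(\tilde p_2,\tilde p_1)}$; in the first case apply the displayed inequality to $(q_1,q_2)=(p_1,p_2)$ and bound $\bar{\mathcal Q}^{\text{DTD}}_{p_i}\ge\mathcal Q_{p_i}$, in the second apply it to $(q_1,q_2)=(\tilde p_2,\tilde p_1)$ and bound $\bar{\mathcal Q}^{\text{DTD}}_{\tilde p_i}\ge\mathcal Q_{p_i}$; either way $\mathcal Q_{(p_1,p_2)}\ge\mathcal Q_{p_1}+\mathcal Q_{p_2}$.

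To prove the displayed inequality, write $q_1=(a_1,\dots,a_s)$, $q_2=(b_1,\dots,b_r)$, and split the double sum in \eqref{rec:DTD} for $\bar{\mathcal Q}^{\text{DTD}}_{(q_1,q_2)}$ into the terms indexed by the customers of $q_1$ and those indexed by the customers of $q_2$. The crucial structural observation is that in \eqref{rec:DTD} the factor multiplying $c^F_{i_j}$ (the expected number of failures at $i_j$) depends only on the \emph{set} $\{i_1,\dots,i_{j-1}\}$ of customers preceding $i_j$, not on their order. Consequently, the $q_1$-block of $\bar{\mathcal Q}^{\text{DTD}}_{(q_1,q_2)}$ is exactly $\bar{\mathcal Q}^{\text{DTD}}_{q_1}$ (for each $a_j$ the preceding set is $\{a_1,\dots,a_{j-1}\}$ in both routes), so it remains to show that the $q_2$-block is at least $\bar{\mathcal Q}^{\text{DTD}}_{q_2}$.

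For the $q_2$-block, fix $m\in\{1,\dots,r\}$ and $l\ge1$. In the concatenated route the preceding set of $b_m$ is $N(q_1)\cup\{b_1,\dots,b_{m-1}\}$, whereas in the stand-alone route $(0,q_2,0)$ it is $\{b_1,\dots,b_{m-1}\}$. I would interpolate between these by inserting the customers of $q_1$ one at a time: set $\widetilde S_0=\{b_1,\dots,b_{m-1}\}$ and $\widetilde S_h=\widetilde S_{h-1}\cup\{a_h\}$; each step from $\widetilde S_{h-1}$ to $\widetilde S_h$ is an instance of the monotonicity inequality \eqref{ineq:mono_prop} with the same $l$, taking $a=a_h$, $b=b_m$, ambient set $S=N(q_1)\cup\{b_1,\dots,b_m\}$, and $\widetilde S=\widetilde S_{h-1}$. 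Its hypotheses hold because the customers of $(q_1,q_2)$ are distinct (so $a_h$, $b_m$, and $\widetilde S_{h-1}$ are mutually disjoint), $|S|\ge2$, and $\sum_{i\in S}\mu_i\le\sum_{i\in N((q_1,q_2))}\mu_i\le fQ$ as $(q_1,q_2)\in\mathcal P$. Chaining over $h=1,\dots,s$ gives the pointwise inequality comparing the two preceding sets; summing over $l\ge1$, multiplying by $c^F_{b_m}\ge0$, and summing over $m=1,\dots,r$ shows the $q_2$-block is at least $\bar{\mathcal Q}^{\text{DTD}}_{q_2}$. Adding the two blocks gives the displayed inequality, completing the proof.

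I do not expect a deep obstacle here; the work is bookkeeping. The one conceptual point requiring care is the order-independence of DTD failure counts among the preceding customers, which is exactly what lets the telescoping via the (set-based) monotonicity property align the $q_1$- and $q_2$-blocks; the other point to get right is the orientation case analysis, which relies on the reversal of a feasible path being feasible with an equal best-orientation recourse bound.
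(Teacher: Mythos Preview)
Your argument is correct and matches the paper's proof in substance: both split the DTD double sum \eqref{rec:DTD} into a prefix block that equals $\bar{\mathcal Q}^{\text{DTD}}_{p_1}$ exactly and a suffix block that is bounded below by $\bar{\mathcal Q}^{\text{DTD}}_{p_2}$ via $s$ successive applications of the monotonicity inequality~\eqref{ineq:mono_prop}. The only cosmetic difference is that the paper handles orientation by a one-line ``without loss of generality $\bar{\mathcal Q}^{\text{DTD}}_p=\mathcal Q^{\text{DTD}}_p$'' (implicitly relying on your observation that the reverse of $(p_1,p_2)$ is $(\tilde p_2,\tilde p_1)$), whereas you spell out the two cases explicitly; your treatment is arguably cleaner and also checks the hypotheses of Definition~\ref{def:mono_prop_parada} more carefully than the paper does.
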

\begin{proof}
Let $p_1,p_2 \in \mathcal{P}$ be two paths such that $(p_1,p_2)\in \mathcal{P}$. Let $p = (p_1,p_2) = (i_1,\dots,i_t)$ be their concatenation. Without loss of generality, we assume that $\bar{\mathcal{Q}}^{\text{DTD}}_p = \mathcal{Q}^{\text{DTD}}_p$, i.e., $p$ is in its best orientation. Write $p_1=(i_1,\dots,i_s)$ and $p_2=(i_{s+1},\dots,i_t)$. Starting from equation \eqref{rec:DTD}, the proof is direct:
\begingroup
\allowdisplaybreaks
\begin{align*}
    \mathcal{Q}^{\text{DTD}}_p &= \sum_{j=1}^t\sum_{l=1}^{+\infty}\mathbb{P}\left[ \sum_{k=1}^{j-1}\xi_{i_k} \leq lQ < \sum_{k=1}^j\xi_{i_k} \right]c^F_{i_{j}}\\
    &= \bar{\mathcal{Q}}^{\text{DTD}}_{p_1} + \sum_{j=s+1}^t\sum_{l=1}^{+\infty} \mathbb{P}\left[ \sum_{k=1}^{j-1}\xi_{i_k} \leq lQ < \sum_{k=1}^j\xi_{i_k} \right]c^F_{i_{j}}\\
    &\geq \bar{\mathcal{Q}}^{\text{DTD}}_{p_1} + \sum_{j=s+1}^t\sum_{l=1}^{+\infty}\mathbb{P}\left[ \sum_{k=s+1}^{j-1}\xi_{i_k} \leq lQ < \sum_{k=s+1}^j\xi_{i_k} \right]c^F_{i_{j}}\\
    &= \bar{\mathcal{Q}}^{\text{DTD}}_{p_1} + \bar{\mathcal{Q}}^{\text{DTD}}_{p_2}\\
    &\geq \mathcal{Q}^{\text{DTD}}_{p_1} + \mathcal{Q}^{\text{DTD}}_{p_2}.
\end{align*}
\endgroup
The first inequality follows from $s$ successive applications of \eqref{ineq:mono_prop} to each element of the summation.
\end{proof}

Before moving to Theorem \ref{thm:Q_superadditive}, we show that the cost-to-go function of the OR policy, \rev{as defined in Section \ref{sec:OR_pol}}, is monotone. 
\begin{lemma}
\label{lemma:monotonicity_cost_to_go}
For any \rev{route $(0, p, 0) = (i_0, i_{1}, \dots, i_{t}, i_{t+1})$ and every position $j \in \{0,\dots,t\}$}, the expected recourse cost-to-go $F^{p}_{i_{j}}(q)$ is non-increasing in the residual capacity \rev{$q \in \{0,\dots,Q\}$}.
\end{lemma}
\begin{proof}
\rev{The proof is by reverse induction on $j\in\{0,\dots,t\}$. We first prove the claim directly for $j=t$ and $j=t-1$, and then show that, for each $j\in\{1,\dots,t-1\}$, monotonicity of $F^p_{i_j}(\cdot)$ implies monotonicity of
$F^p_{i_{j-1}}(\cdot)$.

The claim holds trivially for
$j=t$, since $F^p_{i_t}(q)=0$ for all $q \in \{0,\dots,Q\}$. We next verify the claim for
$j=t-1$. Since $F^p_{i_{t-1}}(q)
=
\min\{H^p_{i_t}(q),H^{*p}_{i_{t-1},i_t}\}$,
and the preventive-restocking term $H^{*p}_{i_{t-1},i_t}$ is independent of $q$, it is enough to
show that $H^p_{i_t}(\cdot)$ is non-increasing. Take integers $q_1,q_2$ with $0\le q_1<q_2\le Q$. Using that $F^p_{i_t}(q)=0$ for all $q \in \{0,\dots,Q\}$, we can write:
\[
H^p_{i_t}(q_1)-H^p_{i_t}(q_2)
=
\sum_{s\in\Xi_{i_t}}
c^F_{i_t}\bigl(\Psi(s,q_1)-\Psi(s,q_2)\bigr)\rho_{i_t}^s,
\]
which is non-negative since $\Psi(s,q)=\max\{\lceil(s - q)/Q\rceil,0\}$ is non-increasing in $q$ for any given $s$. Hence
$F^p_{i_{t-1}}(\cdot)$ is non-increasing. 

Now fix $j\in\{1,\dots,t-1\}$ and suppose that $F^p_{i_j}(\cdot)$ is non-increasing. We show that $F^p_{i_{j-1}}(\cdot)$ is non-increasing. Again, as $F^p_{i_{j-1}}(q) = \min\{H^p_{i_j}(q),H^{*p}_{i_{j-1},i_j}\}$,
and the preventive-restocking term is independent of $q$, it is enough to show that $H^p_{i_j}(\cdot)$ is non-increasing.

Take integers $q_1,q_2$ with $0\le q_1<q_2\le Q$. We show that $H^p_{i_j}(q_1)\ge H^p_{i_j}(q_2)$. For $\ell\in\{1,2\}$, denote by $\psi_\ell(s):=\Psi(s,q_\ell)$ and $r_\ell(s):=\psi_\ell(s)Q+q_\ell-s$ the number of restocking trips required to satisfy demand $s$ and the resulting residual capacity after serving customer $i_j$, respectively, when the vehicle arrives at $i_j$ with residual capacity $q_\ell$. Since $0\le q_1<q_2\le Q$, we have $\psi_1(s)-\psi_2(s)\in\{0,1\}$ for all $s\ge 0$. Thus, expanding the definition of $H^p_{i_j}$ and splitting the demand realizations according to whether starting from $q_2$ instead of $q_1$ reduces the number of required restocking trips, we obtain:
\begin{align}
H^p_{i_j}(q_1)-H^p_{i_j}(q_2)
=&
\sum_{s\in\Xi_{i_j}}
\left[
c^F_{i_j}(\psi_1(s)-\psi_2(s))
+
F^p_{i_j}(r_1(s))-F^p_{i_j}(r_2(s))
\right]\rho_{i_j}^s
\notag\\
=&
\sum_{\substack{s\in\Xi_{i_j}\\ \psi_1(s)=\psi_2(s)}}
\left[
F^p_{i_j}(r_1(s))-F^p_{i_j}(r_2(s))
\right]\rho_{i_j}^s
\notag\\
&+
\sum_{\substack{s\in\Xi_{i_j}\\ \psi_1(s)=\psi_2(s)+1}}
\left[
c^F_{i_j}
+
F^p_{i_j}(r_1(s))-F^p_{i_j}(r_2(s))
\right]\rho_{i_j}^s .
\label{eq:split}
\end{align}
The first sum in \eqref{eq:split} is non-negative by the induction hypothesis: when
$\psi_1(s)=\psi_2(s)$, we have $r_1(s)<r_2(s)$, and
$F^p_{i_j}(\cdot)$ is non-increasing.

It remains to bound the terms in the second sum. If
$\psi_1(s)=\psi_2(s)+1$, then $r_1(s)\in\{0,\dots,Q\}$, so the induction hypothesis
gives $F^p_{i_j}(r_1(s))\ge F^p_{i_j}(Q)$.
Moreover, by the Bellman recursion \eqref{def:cost-to-go} at node $i_j$, $F^p_{i_j}(r_2(s))
\le
H^{*p}_{i_j,i_{j+1}}
=
c^P_{i_j,i_{j+1}}+H^p_{i_{j+1}}(Q)$.
Therefore, each term in the second summation satisfies:
\begin{align*}
    c^F_{i_j} + F^p_{i_j}(r_1(s))-F^p_{i_j}(r_2(s)) & \geq c^F_{i_j}
+
F^p_{i_j}(Q)
-
c^P_{i_j,i_{j+1}}
-
H^p_{i_{j+1}}(Q) \\
&= c^F_{i_j}-c^P_{i_j,i_{j+1}} \\
&= \left(c_{0,i_j}+c_{i_j,i_{j+1}}-c_{0,i_{j+1}}\right)
+
\left(b^F-b^P\right) \\
& \geq 0.
\end{align*}
The first equality uses that $F^p_{i_j}(Q)=\min\{H^p_{i_{j+1}}(Q),c^P_{i_j, i_{j+1}} + H^p_{i_{j+1}}(Q)\}=H^p_{i_{j+1}}(Q)$, i.e., proceeding directly is always optimal at full capacity. The second equality uses the definitions of $c^F_{i_j}$ and $c^P_{i_j,i_{j+1}}$. The last inequality follows from the triangle inequality and
$b^F\ge b^P$. 

Hence both sums in \eqref{eq:split} are non-negative, and
$H^p_{i_j}(q_1)\ge H^p_{i_j}(q_2)$. We conclude that $H_{i_j}^p(\cdot)$ is non-increasing, and hence so is $F^p_{i_{j-1}}(\cdot)$.
}
\end{proof}

\begin{theorem}\label{thm:Q_superadditive}
    The superadditivity property holds for $\mathcal{Q}=\mathcal{Q}^{\text{OR}}$.
\end{theorem}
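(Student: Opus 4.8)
The plan is to first prove the one‑orientation inequality
\(\bar{\mathcal{Q}}^{\text{OR}}_{(q_1,q_2)} \ge \bar{\mathcal{Q}}^{\text{OR}}_{q_1} + \bar{\mathcal{Q}}^{\text{OR}}_{q_2}\)
for every \(q_1,q_2\in\mathcal{P}\) with \((q_1,q_2)\in\mathcal{P}\), and then deduce the superadditivity property of Definition~\ref{def:superadditivity}. This last step is short: \(\mathcal{Q}^{\text{OR}}_{(p_1,p_2)}\) is the minimum of \(\bar{\mathcal{Q}}^{\text{OR}}_{(p_1,p_2)}\) and \(\bar{\mathcal{Q}}^{\text{OR}}_{(\mathrm{rev}(p_2),\mathrm{rev}(p_1))}\), where \(\mathrm{rev}\) reverses a path; applying the one‑orientation bound to the splits \((p_1,p_2)\) and \((\mathrm{rev}(p_2),\mathrm{rev}(p_1))\) — both admissible, since membership in \(\mathcal{P}\) depends only on node sets — and then using \(\bar{\mathcal{Q}}^{\text{OR}}_{q}\ge\mathcal{Q}^{\text{OR}}_q\) and \(\mathcal{Q}^{\text{OR}}_{\mathrm{rev}(q)}=\mathcal{Q}^{\text{OR}}_q\) shows both arguments of the minimum are at least \(\mathcal{Q}^{\text{OR}}_{p_1}+\mathcal{Q}^{\text{OR}}_{p_2}\). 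Together with Theorem~\ref{thm:validity_DL-shaped}, this also yields that the DL‑shaped method is always valid under the OR policy.

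For the core inequality I would fix \(q_1=(i_1,\dots,i_s)\), \(q_2=(i_{s+1},\dots,i_t)\) and write \(p:=(q_1,q_2)=(i_1,\dots,i_t)\). Let \(F^{q_1}\) denote the cost‑to‑go of route \((0,q_1,0)\), so \(F^{q_1}_{i_s}(\cdot)\equiv 0\) and \(\bar{\mathcal{Q}}^{\text{OR}}_{q_1}=F^{q_1}_{i_0}(Q)\). The heart of the argument is the pointwise bound
\[
 F^p_{i_j}(q)\;\ge\;F^{q_1}_{i_j}(q)+\bar{\mathcal{Q}}^{\text{OR}}_{q_2}\qquad\text{for all }j\in\{0,\dots,s\},\ q\in[0,Q],
\]
since specializing it to \(j=0,\ q=Q\) gives \(\bar{\mathcal{Q}}^{\text{OR}}_{(q_1,q_2)}=F^p_{i_0}(Q)\ge F^{q_1}_{i_0}(Q)+\bar{\mathcal{Q}}^{\text{OR}}_{q_2}=\bar{\mathcal{Q}}^{\text{OR}}_{q_1}+\bar{\mathcal{Q}}^{\text{OR}}_{q_2}\). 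Two preliminary facts set up the base case \(j=s\). First, because \(p\) and \(q_2\) share the customer tail \(i_{s+1},\dots,i_t\) and the same terminal condition, a routine backward induction over that tail gives \(F^p_{i_k}(\cdot)=F^{q_2}_{i_k}(\cdot)\) and \(H^p_{i_k}(\cdot)=H^{q_2}_{i_k}(\cdot)\) for \(k\in\{s+1,\dots,t\}\); combined with \(F^{q_2}_{0}(Q)=H^{q_2}_{i_{s+1}}(Q)\) (proceeding is optimal from full capacity, as noted after \eqref{def:cost-to-go}), this yields \(\bar{\mathcal{Q}}^{\text{OR}}_{q_2}=H^p_{i_{s+1}}(Q)\). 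Second, Lemma~\ref{lemma:monotonicity_cost_to_go} gives \(F^p_{i_s}(q)\ge F^p_{i_s}(Q)\), and \(F^p_{i_s}(Q)=H^p_{i_{s+1}}(Q)\) by the same identity, so \(F^p_{i_s}(q)\ge\bar{\mathcal{Q}}^{\text{OR}}_{q_2}\) for every \(q\) — exactly the bound at \(j=s\), since \(F^{q_1}_{i_s}(\cdot)\equiv 0\).

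The bound then propagates by backward induction on \(j\). Assuming it holds at \(j+1\le s\), and noting that \(\Psi\), the failure costs \(c^F_{i_{j+1}}\), and the preventive costs \(c^P_{i_j,i_{j+1}}\) along \(i_1,\dots,i_s\) coincide in \(p\) and \(q_1\), substituting the hypothesis into the definition of \(H^p_{i_{j+1}}(q)\) in \eqref{def:cost-to-go} and using \(\sum_{s\in\Xi_{i_{j+1}}}\rho^s_{i_{j+1}}=1\) yields \(H^p_{i_{j+1}}(q)\ge H^{q_1}_{i_{j+1}}(q)+\bar{\mathcal{Q}}^{\text{OR}}_{q_2}\) for every \(q\); inserting this into both branches of the Bellman equation for \(F^p_{i_j}(q)\) and factoring out the constant \(\bar{\mathcal{Q}}^{\text{OR}}_{q_2}\) gives \(F^p_{i_j}(q)\ge F^{q_1}_{i_j}(q)+\bar{\mathcal{Q}}^{\text{OR}}_{q_2}\), closing the induction and hence finishing the proof.

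The delicate point — the main obstacle — is \emph{where} to split the dynamic program. The tempting choice of comparing \(F^p_{i_{s+1}}\) (the cost‑to‑go once \(q_2\) has been entered) with \(\bar{\mathcal{Q}}^{\text{OR}}_{q_2}\) fails, because \(F^p_{i_{s+1}}\) does not include the failure cost possibly incurred while serving the first customer \(i_{s+1}\) of \(q_2\), whereas \(\bar{\mathcal{Q}}^{\text{OR}}_{q_2}\) does. One must instead split at the instant the vehicle finishes serving the last customer \(i_s\) of \(q_1\): there, Lemma~\ref{lemma:monotonicity_cost_to_go} carries the whole argument, since along the a priori route the vehicle can have at most full capacity when it resumes at \(i_{s+1}\), so the subproblem it faces along \(p\) dominates the one it would face starting \(q_2\) afresh from the depot. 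Once the split is placed correctly, the rest is mechanical: the Bellman recursion is affine in the continuation value, and the only inequality used besides monotonicity is \(c^P_{i_j,i_{j+1}}\ge 0\), which is just the triangle inequality together with \(b^P\ge 0\).
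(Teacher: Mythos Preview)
Your proof is correct and rests on the same two ingredients as the paper's: the split at the last customer $i_s$ of the first segment, and Lemma~\ref{lemma:monotonicity_cost_to_go} to lower-bound $F^p_{i_s}(q)$ by $\bar{\mathcal Q}^{\text{OR}}_{q_2}$. The difference is in how the prefix is handled. You establish the pointwise value-function inequality $F^p_{i_j}(q)\ge F^{q_1}_{i_j}(q)+\bar{\mathcal Q}^{\text{OR}}_{q_2}$ for all $j\le s$ by backward induction through the Bellman recursion, exploiting that a constant additive shift in the continuation value passes through both branches of~\eqref{def:cost-to-go}. The paper instead introduces random cost components $C_1(\xi)$ and $C_2(\xi)$ and obtains the prefix bound in one stroke via a policy-suboptimality argument: the OR decisions designed for $(0,p,0)$, restricted to the first $s$ customers, constitute a feasible (but generally suboptimal) recourse policy for $(0,q_1,0)$, so $\mathbb E[C_1(\xi)]\ge\bar{\mathcal Q}^{\text{OR}}_{q_1}$. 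Your induction yields a slightly stronger intermediate statement (the pointwise inequality at every $j$ and $q$), while the paper's argument is more concise and highlights the policy interpretation; both routes are standard DP duals of one another and lead to the same conclusion.
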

\begin{proof}
Let $p_1,p_2\in\mathcal{P}$ such that $(p_1,p_2)\in\mathcal{P}$. Let $p = (p_1,p_2) = (i_1,\dots,i_t)$ be their concatenation. Without loss of generality, we assume that $\bar{\mathcal{Q}}^{\text{OR}}_p = \mathcal{Q}^{\text{OR}}_p$, i.e., $p$ is in its best orientation. Write $p_1=(i_1,\dots,i_j)$ and $p_2=(i_{j+1},\dots,i_t)$. Let $\xi=(\xi_1,\dots,\xi_t)$ denote the vector of random demands along route $(0,p,0)$. Apply the OR policy on that route, and define the following two random cost components:
\begin{enumerate}
    \item $C_1(\xi)$ := total recourse cost incurred \emph{up to and including}
            the service of customer $i_j$.
    \item $C_2(\xi)$ := total recourse cost incurred \emph{after}
            the service of customer $i_j$.
\end{enumerate}

Clearly, $C_1(\xi)+C_2(\xi)$ is the total recourse cost on $p$, hence the expected recourse cost satisfies:
\begin{equation}\label{eq:superadd_Q_decomp}
    \mathcal{Q}^{\text{OR}}_{p} = \mathbb E\left[C_1(\xi)+C_2(\xi)\right].
\end{equation}

\paragraph{Prefix bound.}
First, by applying the OR decisions designed for $(0,p,0)$ until customer $i_j$ is served, and then returning to the depot as planned in the a priori route $(0,p_1,0)$, we obtain a suboptimal recourse policy for route $(0,p_1,0)$, with expected cost $\mathbb E[C_1(\xi)]$. Because $\bar{\mathcal{Q}}^{\text{OR}}_{p_1}$ is, by construction of the OR policy, the minimum expected recourse cost achievable on that route, we obtain:
\begin{equation}\label{eq:superadd_prefix}
    \mathbb E[C_1(\xi)] \geq \bar{\mathcal{Q}}^{\text{OR}}_{p_1} \geq \mathcal{Q}^{\text{OR}}_{p_1}.
\end{equation}

\paragraph{Suffix bound.}
Condition on the demands $\xi_{i_1:i_j}:=(\xi_{i_1},\dots,\xi_{i_j})$.  
Given these values, the residual capacity after serving $i_j$ is uniquely determined as $q(\xi_{i_1:i_j})\in \rev{\{0,\dots,Q\}}$. From there, expanding the conditional expectation of $C_2(\xi)$ using the Bellman recursion \eqref{def:cost-to-go} yields:
\begingroup
\allowdisplaybreaks
\begin{align*}
    \mathbb E\bigl[C_2(\xi)
                 \,\bigl\lvert\,
                 \xi_{i_1:i_j}\bigr] &= F^{p}_{i_j}\bigl(q(\xi_{i_1:i_j})\bigr)\\
    &= \min\left\{ H^p_{i_{j+1}}\bigl(q(\xi_{i_1:i_j})\bigr), \ H^{*p}_{i_j,i_{j+1}} \right\} \\
    &= \min\left\{ H^{p_2}_{i_{j+1}}\bigl(q(\xi_{i_1:i_j})\bigr), \  c^P_{i_j, i_{j+1}}+ H^{p_2}_{i_{j+1}}(Q) \right\} \\
    &\geq H^{p_2}_{i_{j+1}}(Q)\\
    &= \bar{\mathcal{Q}}^{\text{OR}}_{p_2}\\
    &\geq \mathcal{Q}^{\text{OR}}_{p_2}.
\end{align*}
\endgroup

The first inequality follows from the monotonicity of $H^{p_2}_{i_{j+1}}(\cdot)$, demonstrated in Lemma~\ref{lemma:monotonicity_cost_to_go}, and the non-negativity of the preventive recourse action cost $c^{P}_{i_j,i_{j+1}}$.
Because the bound holds for every realization of $\xi_{i_1:i_j}$, the \rev{law of total expectation} gives:
\begin{equation}\label{eq:superadd_suffix}
  \mathbb E[C_2(\xi)] \geq \mathcal{Q}^{\text{OR}}_{p_2}.
\end{equation}

By adding~\eqref{eq:superadd_prefix} and~\eqref{eq:superadd_suffix} and substituting the resulting inequality into~\eqref{eq:superadd_Q_decomp}, we obtain that $\mathcal{Q}^{\text{OR}}_{p} \geq \mathcal{Q}^{\text{OR}}_{p_1} + \mathcal{Q}^{\text{OR}}_{p_2}$.
\end{proof}



\section{New valid inequalities} \label{sec:E-cuts}

The performance of the DL-shaped method critically relies on the strength of its LBFs. However, the S-cuts \eqref{DL:s-cut} cannot efficiently tighten the approximation of the recourse function for sets $S \subseteq N$ that can be partitioned into $m_S$ paths with \rev{near-zero} recourse costs. This is a common \rev{occurrence} under the OR policy, where recourse costs can often be moved to the first stage by constructing long routes that make available cheap preventive recourse actions. To obtain stronger LBFs, we propose a generalization of the S-cuts that bounds the recourse for a restricted set of the feasible partitions of $S$ into paths. This subset is defined based on the edges that can compose these paths, and we thus call our new valid inequalities the edge-set cuts (E-cuts).

\rev{Section~\ref{sec:E-cuts:def_and_validity} defines the E-cuts and shows they generalize the P-cuts and the S-cuts. Section~\ref{sec:ecut-prs} analytically compares the E-cuts with the partial route-split inequalities of~\cite{hoogendoorn2023improved}. The remainder of the section then focuses on the OR policy, which is the setting in which we implement the DL-shaped method computationally. Section~\ref{sec:E-cuts:wheel} exhibits a family of instances where a single E-cut suffices for convergence of the DL-shaped method, whereas at least $n$ valid inequalities from the literature would be necessary. Section~\ref{sec:E-cuts:strategy} describes our edge-set selection strategy, and Section~\ref{sec:bounds} presents the lower bounds we use to instantiate the E-cuts under the OR policy.}


\subsection{Definition and basic properties of the E-cuts} \label{sec:E-cuts:def_and_validity}

For a set of customers $S \subseteq N$ and an edge set $E_S \subseteq E(S)$, let $\bar{\mathcal{P}}(E_S) \subseteq \mathcal{P}$ denote the set of feasible paths on the subgraph $(S,E_S)$ of $G$. Define $\bar{\Pi}(E_S,m) := \left\{ \pi \in \Pi(S,m) : \pi \subseteq \bar{\mathcal{P}}(E_S) \right\}$ as the set of all partitions of $S$ into exactly $m$ feasible paths formed only of edges of $E_S$, $\bar{m}(E_S) := \min\{m \in \mathbb{N} : \bar{\Pi}(E_S,m) \neq \emptyset\}$ as the minimum number of vehicles needed to cover the customers of $S$ on graph $(S,E_S)$, and $\bar{\mathcal{L}}(E_S, m) := \min_{ \pi \in \bar{\Pi}(E_S,m)} \sum_{p \in \pi} \mathcal{Q}_{p}$ as the smallest recourse cost achievable by partitioning $S$ into exactly $m$ of these feasible paths. The E-cut of the edge set $E_S$ is given by:
\begin{equation}
    \sum_{i \in S} \theta_{i} \geq \mathcal{L}_{E_S}\left( \sum_{e \in E_S}x_{e} - |S| + m_{E_S} + 1 \right), \label{DL:e-cut}
\end{equation}
where $1 \leq m_{E_S} \leq \bar{m}(E_S)$ and $0 \leq \mathcal{L}_{E_S}\leq \bar{\mathcal{L}}(E_S, m_{E_S})$ are fixed values.

\begin{theorem}\label{thm:validity_E-cuts}
The E-cut \eqref{DL:e-cut} is valid if the superadditivity property holds.
\end{theorem}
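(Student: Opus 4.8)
The plan is to follow the same template as the S-cut portion of the sufficiency argument for Theorem~\ref{thm:validity_DL-shaped}, reducing the validity of the E-cut to that of the P-cuts. Since superadditivity holds, Theorem~\ref{thm:validity_DL-shaped} already tells us that MP is a valid reformulation and that, for every integer-feasible first-stage solution $x^{\nu}$ with $\mathcal{P}(x^{\nu})=\{p_1,\dots,p_m\}$, the incremental-contribution assignment $\theta^{\nu}$ constructed there is non-negative, satisfies all P-cuts and S-cuts, and sums to $\mathcal{Q}(x^{\nu})$. Hence it suffices to show that this $\theta^{\nu}$ --- and, more strongly, any $\theta\ge 0$ satisfying the P-cuts --- also satisfies the E-cut \eqref{DL:e-cut} at $x=x^{\nu}$, so that appending E-cuts leaves MP a valid reformulation.

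First I would dispose of the trivial case: if the factor $\sum_{e\in E_S}x^{\nu}_e-|S|+m_{E_S}+1$ is non-positive, the right-hand side of \eqref{DL:e-cut} is at most $0$ while $\sum_{i\in S}\theta_i\ge 0$, so the cut holds. Otherwise the factor is a positive integer, and I would analyze the structure of $x^{\nu}$ restricted to $E_S$. Because every route is a simple cycle through the depot, the customer edges of $x^{\nu}$ lying inside $S$ form vertex-disjoint subpaths of $p_1,\dots,p_m$ that cover $S$, and keeping only those belonging to $E_S$ (deleting the used edges of $E(S)\setminus E_S$) merely fragments these subpaths into shorter ones. This produces a partition of $S$ into $r'$ vertex-disjoint paths $\tilde p_1,\dots,\tilde p_{r'}$, each a subpath of some $p_j$ and each built only from edges of $E_S$; hence $\{\tilde p_1,\dots,\tilde p_{r'}\}\in\bar{\Pi}(E_S,r')$ and therefore $r'\ge \bar{m}(E_S)\ge m_{E_S}$. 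Counting edges gives $\sum_{e\in E_S}x^{\nu}_e=|S|-r'$, so the factor equals $m_{E_S}-r'+1\le 1$; combined with positivity it is exactly $1$ and $r'=m_{E_S}$ --- the same collapse that makes the S-cuts tight in the proof of Theorem~\ref{thm:validity_DL-shaped}.

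To finish, each $\tilde p_l$ is a subpath of some $p_j$, so all of its edges are traversed by $x^{\nu}$ and its P-cut \eqref{DL:p-cut} is active, giving $\sum_{i\in N(\tilde p_l)}\theta_i\ge \mathcal{Q}_{\tilde p_l}$. Summing over $l=1,\dots,m_{E_S}$ and using $\{\tilde p_l\}\in\bar{\Pi}(E_S,m_{E_S})$, the definition of $\bar{\mathcal{L}}(E_S,m_{E_S})$, and $\mathcal{L}_{E_S}\le\bar{\mathcal{L}}(E_S,m_{E_S})$ yields
\[
    \sum_{i\in S}\theta_i=\sum_{l=1}^{m_{E_S}}\sum_{i\in N(\tilde p_l)}\theta_i\;\ge\;\sum_{l=1}^{m_{E_S}}\mathcal{Q}_{\tilde p_l}\;\ge\;\bar{\mathcal{L}}(E_S,m_{E_S})\;\ge\;\mathcal{L}_{E_S},
\]
which is exactly the right-hand side of \eqref{DL:e-cut} because the factor equals $1$. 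I expect the only delicate point to be the middle step: verifying rigorously that restricting the solution to $E_S$ produces a genuine element of $\bar{\Pi}(E_S,r')$ with $r'\ge m_{E_S}$ --- so that the factor is forced to be at most one --- including the edge cases where some $\tilde p_l$ is a singleton or arises from a single-customer route. Everything else is the edge-counting and telescoping bookkeeping already performed for Theorem~\ref{thm:validity_DL-shaped}.
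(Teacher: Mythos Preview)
Your proposal is correct and follows precisely the route the paper intends: its proof is the one-line remark that the S-cut portion of the sufficiency argument in Theorem~\ref{thm:validity_DL-shaped} ``directly extends to the E-cuts,'' and you have spelled out that extension---showing that the P-cuts imply the E-cut by restricting the active edges to $E_S$, obtaining a partition in $\bar{\Pi}(E_S,r')$ with $r'\ge m_{E_S}$, collapsing the factor to $1$, and summing the active P-cuts of the resulting subpaths. The edge cases you flag (singletons, single-customer routes) cause no difficulty, since singleton paths lie in $\bar{\mathcal{P}}(E_S)$ and their P-cuts are still active.
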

\begin{proof}
\rev{Let $(x^{\nu},z^{\nu},\theta^{\nu})$ be a feasible solution of the DL-shaped formulation MP, which, by Theorem \ref{thm:validity_DL-shaped}, is a valid reformulation of \eqref{objective}--\eqref{z_val} under superadditivity. Fix $S \subseteq N$, $E_S \subseteq E(S)$, $1 \leq m_{E_S} \leq \bar{m}(E_S)$, and $0 \leq \mathcal{L}_{E_S}\leq \bar{\mathcal{L}}(E_S, m_{E_S})$. We show that the resulting E-cut is satisfied at $(x,\theta) = (x^{\nu},\theta^{\nu})$.
The factor multiplying $\mathcal{L}_{E_S}$ in the E-cut can be non-positive at $x=x^\nu$, in which case the cut is trivially respected. Otherwise, by feasibility of $(x^\nu,z^\nu)$ and the definition of $\bar m(E_S)$, $\sum_{e \in E_S} x^\nu_e \leq |S|-\bar m(E_S) \leq |S|-m_{E_S}$, so the factor is at most one; being a strictly positive integer, it is exactly one. Equivalently, the customers of $S$ are covered by exactly $m_{E_S}$ subpaths of $\mathcal{P}(x^\nu)$, each formed only of edges in $E_S$. Denote these subpaths by $\{\bar{p}_{j_k}\}_{k=1}^{m_{E_S}} \in \bar{\Pi}(E_S,m_{E_S})$, where for each $k \in \{1,\dots,m_{E_S}\}$, $j_k \in \{1,\dots,m\}$ is the index of the path of $\mathcal{P}(x^\nu)$ of which $\bar{p}_{j_k}$ is a subpath.
By construction, $\bar{p}_{j_k}(x^\nu)=|N(\bar{p}_{j_k})|-1$ for each of these subpaths. Since $(x^\nu,z^\nu,\theta^\nu)$ is feasible for MP, the P-cuts \eqref{DL:p-cut} of these subpaths are satisfied, and summing them at $x=x^\nu$ gives:
\begin{align*}
    \sum_{i \in S}\theta^{\nu}_i
    = \sum_{k=1}^{m_{E_S}}\sum_{i \in N(\bar{p}_{j_k})} \theta^{\nu}_i
    \geq \sum_{k=1}^{m_{E_S}} \mathcal{Q}_{\bar{p}_{j_k}}
    \geq \bar{\mathcal{L}}(E_S, m_{E_S})
    \geq \mathcal{L}_{E_S},
\end{align*}
where the second inequality follows from the definition of $\bar{\mathcal{L}}(E_S, m_{E_S})$, since $\{\bar{p}_{j_k}\}_{k=1}^{m_{E_S}} \in \bar{\Pi}(E_S,m_{E_S})$. The E-cut is thus respected at $(x,\theta)=(x^\nu,\theta^\nu)$.}

\end{proof}

\begin{proposition}\label{prop:E-cuts-generalize}
The E-cuts \eqref{DL:e-cut} generalize the P-cuts \eqref{DL:p-cut} and the S-cuts \eqref{DL:s-cut}.
\end{proposition}

\begin{proof}
The S-cut associated with a set $S \subseteq N$ is obtained by taking $E_S = E(S)$. The P-cut of a path $p = (i_1,\dots,i_t) \in \mathcal{P}$ corresponds to the strongest valid E-cut for customers $S=N(p)$ and the edge set $E_S = \{\{i_j,i_{j+1}\}\}_{j=1}^{t-1}$.
\end{proof}

\rev{We summarize in Table~\ref{Tab:cuts_summary} the structure of the three families of DL-shaped cuts. Each is indexed by a different object: a path, a customer set, or an edge set. For a given set of customers $S$, the P-cuts and S-cuts arise as the two extreme cases of the E-cut family, corresponding respectively to the smallest valid edge set (the $|S|-1$ edges of a single path) and the largest ($E(S)$, the set of all edges with both endpoints in $S$). By Theorems~\ref{thm:validity_DL-shaped} and~\ref{thm:validity_E-cuts}, all three are valid under the superadditivity property.}

\begin{table}[H]
\centering
\rev{
\caption{Structure of the P-cuts, S-cuts, and E-cuts} \label{Tab:cuts_summary}
\begin{tabular*}{0.8\textwidth}{lccc}
\toprule
Cut family & Indexed by & Active at an integer solution $x$ if & Coefficient \\
\midrule
P-cut \eqref{DL:p-cut} & path $p \in \mathcal{P}$ & $p$ is a subpath of $\mathcal{P}(x)$ & $\mathcal{Q}_p$ \\
\addlinespace
\addlinespace
S-cut \eqref{DL:s-cut} & set $S \subseteq N$ & $m_S$ subpaths of $\mathcal{P}(x)$ partition $S$ & $\mathcal{L}_S$ \\
\addlinespace
\multirow{2}{*}{E-cut \eqref{DL:e-cut}} & \multirow{2}{*}{edge set $E_S \subseteq E(S)$} & $m_{E_S}$ subpaths of $\mathcal{P}(x)$ cover $S$ & \multirow{2}{*}{$\mathcal{L}_{E_S}$} \\
 & & using only edges in $E_S$ & \\
\bottomrule
\end{tabular*}
}
\end{table}


\rev{
\subsection{Relation to the partial route-split inequalities} \label{sec:ecut-prs}

We recall the framework of~\cite{hoogendoorn2023improved}. A \emph{partial route} \cite[Section~4.1]{hoogendoorn2023improved} is an ordered sequence $h = (U_0, U_1, \ldots, U_b)$ of subsets of~$N_0$, for $b \geq 2$, that (i) starts and ends at the depot, i.e., $U_0 = U_b = \{0\}$, and (ii) visits disjoint subsets of customers $U_{j} \subseteq N$, $j \in \{1,\dots,b-1\}$, where (iii) at least one of any two consecutive subsets is a singleton, i.e., $|U_j|\geq 2 \implies |U_{j+1}|=1$, $j \in \{1,\dots,b-1\}$. We denote by $N(h) := \bigcup_{j=1}^{b-1} U_j$ the set of customers covered by $h$. A subset~$U_j$ with $|U_j| \geq 2$ is an \emph{unstructured component}: its customers may be visited in any order. If $h$ does not comprise unstructured components, it determines a unique route, and we call it \emph{fully structured}. A route $(0,p,0)$ with $N(p)=N(h)$ is said to \emph{adhere} to~$h$ if it respects the ordering prescribed by~$h$; we denote by $R_h$ the set of all routes that adhere to~$h$ in at least one of their two orientations. A \emph{partial route activation function} $W_h(x)$ equals~$1$ at every integer feasible solution containing a route in $R_h$, and is nonpositive at all other integer feasible solutions \cite[Section~4.3]{hoogendoorn2023improved}. Given a partial route $h$ and a lower bound~$P_h \leq \mathcal{Q}_p$ on the expected recourse cost of every route $(0,p,0) \in R_h$, the \emph{partial route-split inequality} \cite[equation~(16)]{hoogendoorn2023improved} reads:
\begin{equation}\label{eq:prs}
  \theta_{v(h)} \geq P_h \, W_h(x),
\end{equation}
where $v(h)$ denotes the lowest-indexed customer in~$N(h)$.

In addition to partial route-split inequalities, \cite{hoogendoorn2023improved} introduce \textit{route-split inequalities}, which are the special case where $h$ is fully structured, and \textit{multi-route-split inequalities}, which bound the aggregate
recourse of a customer set~$S$ without prescribing a partial ordering. The partial route-split inequalities occupy an intermediate position between the route-split inequalities, which are tight but active only at solutions containing a specific route, and the multi-route-split inequalities, which are active for a wider set of solutions but provide weaker bounds. E-cuts similarly interpolate between P-cuts and S-cuts. However, the two families achieve this flexibility through different mechanisms: partial route-split inequalities vary the partial ordering of customers, whereas E-cuts vary the set of edges available to form routes. In Theorem~\ref{thm:ecut-prs}, we establish that under superadditivity, for every partial route-split inequality \eqref{eq:prs}, there exists an E-cut \eqref{DL:e-cut} providing at least as strong a bound on the aggregate recourse $\sum_{i \in N(h)}\theta_i$ at every integer feasible solution.

\begin{theorem}\label{thm:ecut-prs}
Let $h = (U_0, U_1, \ldots, U_b)$ be a partial route and $P_h \geq 0$ a lower bound on $\mathcal{Q}_p$ for any route $(0,p,0) \in R_h$. If the superadditivity property holds, the E-cut~\eqref{DL:e-cut} with $S = N(h)$, $E_S = E_S^h :=  \bigcup_{j=1}^{b-1} E(U_j) \cup \bigcup_{j=1}^{b-2} \delta(U_j, U_{j+1})$, $m_{E_S}=1$ and $\mathcal{L}_{E_S} = P_h$ is valid. Moreover, at every integer feasible solution~$x$, the following inequality holds and can be strict:
\begin{equation}\label{eq:ecut-vs-prs}
  \min\Bigl\{\sum_{i \in N(h)}\theta_i :
  \text{\eqref{DL:e-cut}},\; \theta \geq 0\Bigr\}
  \geq
  \min\Bigl\{\sum_{i \in N(h)}\theta_i :
  \text{\eqref{eq:prs}},\; \theta \geq 0\Bigr\}.
\end{equation}
\end{theorem}

\begin{proof}
Fix any path $p$ such that $\{p\} \in \bar{\Pi}(E_S, 1)$; by definition, $p$ visits all customers of $S = N(h)$ using only edges of~$E_S$. For any unstructured component $U_j$, condition~(iii) in the definition of partial route ensures that $U_{j-1} = \{u\}$ and $U_{j+1} = \{w\}$ are singletons. Since $E_S$ contains no edges between non-consecutive components by construction, the only nodes outside $U_j$ adjacent to a node of $U_j$ in the graph $(S, E_S)$ are $u$ and $w$. It follows that $p$ traverses each unstructured component as a contiguous block, and hence $U_1, \ldots, U_{b-1}$ in sequence (in one of its two orientations). Therefore, $(0,p,0) \in R_h$, and so $P_h \leq \mathcal{Q}_p$. This holds for all $\{p\} \in \bar{\Pi}(E_S, 1)$, hence $P_h \leq \bar{\mathcal{L}}(E_S, 1)$. The E-cut is therefore valid under superadditivity by Theorem~\ref{thm:validity_E-cuts}.

Now fix an integer feasible solution~$x$. The left-hand side of~\eqref{eq:ecut-vs-prs} evaluates to $\max\{0,\; P_h(\sum_{e \in E_S} x_e - |S| + 2)\}$, and the right-hand side to $\max\{0,\; P_h\, W_h(x)\}$. If $W_h(x) = 1$, the solution contains a route $(0,p,0) \in R_h$, whose $|S|-1$ customer edges all lie in~$E_S^h$ by construction, so both sides equal~$P_h$. If $W_h(x) \leq 0$, the right-hand side equals~$0$, while the left-hand side is non-negative.

The inequality \eqref{eq:ecut-vs-prs} can be strict. Take a partial route $h=(\{0\},\{1\},\{0\})$ with $P_h > 0$. The E-cut \eqref{DL:e-cut} associated with $h$ is obtained by taking $S=\{1\}$, $E_S= \emptyset$, $m_{E_S}=1$ and $\mathcal{L}_{E_S} = P_h$. It reads $\theta_1 \geq P_h(\sum_{e \in \emptyset} x_e - 1 + 2) = P_h$ and is thus active at every solution, whereas the partial route-split inequality \eqref{eq:prs} is only active at an integer feasible solution if it contains route $(0,1,0)$.
\end{proof}

\begin{remark}\label{rem:dominance}
The result established in Theorem~\ref{thm:ecut-prs} concerns the aggregate recourse bound $\sum_{i \in N(h)}\theta_i$ at integer feasible solutions, and is not a dominance relation in the polyhedral sense. The partial route-split inequality~\eqref{eq:prs} additionally constrains the individual variable~$\theta_{v(h)}$, which may provide complementary tightening in the continuous relaxation. We note that E-cuts cannot be adapted to constrain a single variable~$\theta_v$ in a similar fashion. Indeed, unlike partial route-based cuts, E-cuts for nested edge sets $E_{S'} \subset E_S$ can be simultaneously active at the same integer feasible solution, and assigning their bounds to individual variables would lead to compounding. For instance, consider the P-cuts for $p = (2)$ and $p' = (1,2)$ with $\mathcal{Q}_p = 1$ and $\mathcal{Q}_{p'} = 2$. At the solution corresponding to route $(0,1,2,0)$, both P-cuts are active. If they were indexed as $\theta_2 \geq 1$ and $\theta_1 \geq 2$, their sum would force $\theta_1 + \theta_2 \geq 3 > 2 = \mathcal{Q}_{p'}$, exceeding the true recourse cost. The $\sum_{i \in S}\theta_i$ formulation avoids this: $\theta_2 \geq 1$ and $\theta_1 + \theta_2 \geq 2$ are jointly valid.
\end{remark}


\subsection{One E-cut can replace $n$ cuts from the literature} \label{sec:E-cuts:wheel}

In this section, we show that the E-cuts provide a fundamental advantage over existing valid inequalities for approximating the OR recourse function. Specifically, we construct a family of instances for which the DL-shaped method can terminate after generating a single E-cut, while at least~$n$ cuts from~\cite{hoogendoorn2023improved} and~\cite{parada2024disaggregated} are needed, a number that grows arbitrarily with the instance size.

\paragraph{Instance family.} Consider the wheel graph formed by connecting $n\geq4$~customers by the cycle $E_1 := \{ e_k \}_{k=1}^n$, where $e_k := \{k, k+1\}$ for $k \leq n-1$ and $e_{n} := \{n,1\}$, and connecting the depot~$0$ to every customer $i \in N$. Define the traveling cost between nodes as their distance on this graph, i.e., $c_{0,i}=1$ for $i\in N$, and for distinct customers $i,j \in N$, $c_{i,j}=1$ if $\{i,j\}\in E_1$ and $c_{i,j}=2$ otherwise. As distances on a graph, these costs satisfy the triangle inequality. We refer to the edges in~$E_1$ as \emph{boundary edges} and to those in $E_2 := E(N) \setminus E_1$ as \emph{diagonal edges}. The resulting instance family is illustrated in Figure~\ref{fig:wheel_graphs}.

\begin{figure}[htbp]\rev{
  \centering
  \begin{tikzpicture}[scale=1.705, baseline=(A0.center),
      every node/.style={font=\small},
      nd/.style ={circle,   draw, thick, minimum size=5mm, inner sep=0pt},
      rt/.style ={rectangle,draw, thick, minimum size=5mm, inner sep=0pt}]
    \node[rt] (A0) at ( 0, 0) {0};
    \node[nd] (A1) at ( 0, 1) {1};
    \node[nd] (A2) at ( 1, 0) {2};
    \node[nd] (A3) at ( 0,-1) {3};
    \node[nd] (A4) at (-1, 0) {4};
    \draw (A0) -- (A1);
    \draw (A0) -- (A2);
    \draw (A0) -- (A3);
    \draw (A0) -- (A4);
    \draw[very thick] (A1) -- (A2);
    \draw[very thick] (A2) -- (A3);
    \draw[very thick] (A3) -- (A4);
    \draw[very thick] (A4) -- (A1);
    \draw[dotted, thick] (A1) to[bend right=35] (A3);
    \draw[dotted, thick] (A2) to[bend right=35] (A4);
  \end{tikzpicture}
  \hspace{1cm}
  %
  \begin{tikzpicture}[scale=1.562,baseline=(anchor),
      every node/.style={font=\small},
      nd/.style ={circle,   draw, thick, minimum size=5mm, inner sep=0pt},
      rt/.style ={rectangle,draw, thick, minimum size=5mm, inner sep=0pt}]
    \node[rt] (B0) at (0,0) {0};
    \node[nd] (B1) at ( 90:1.2) {1};
    \node[nd] (B2) at ( 18:1.2) {2};
    \node[nd] (B3) at (306:1.2) {3};
    \node[nd] (B4) at (234:1.2) {4};
    \node[nd] (B5) at (162:1.2) {5};
    \draw (B0) -- (B1);
    \draw (B0) -- (B2);
    \draw (B0) -- (B3);
    \draw (B0) -- (B4);
    \draw (B0) -- (B5);
    \draw[very thick] (B1) -- (B2);
    \draw[very thick] (B2) -- (B3);
    \draw[very thick] (B3) -- (B4);
    \draw[very thick] (B4) -- (B5);
    \draw[very thick] (B5) -- (B1);
    \draw[dotted, thick] (B1) -- (B3);
    \draw[dotted, thick] (B1) -- (B4);
    \draw[dotted, thick] (B2) -- (B4);
    \draw[dotted, thick] (B2) -- (B5);
    \draw[dotted, thick] (B3) -- (B5);
    \coordinate (anchor) at (0,0.1);
  \end{tikzpicture}
  \hspace{1cm}
  %
  \begin{tikzpicture}[scale=1.43,baseline=(C0.center),
      every node/.style={font=\small},
      nd/.style ={circle,   draw, thick, minimum size=5mm, inner sep=0pt},
      rt/.style ={rectangle,draw, thick, minimum size=5mm, inner sep=0pt}]
    \node[rt] (C0) at (0,0) {0};
    \node[nd] (C1) at ( 90:1.2) {1};
    \node[nd] (C2) at ( 30:1.2) {2};
    \node[nd] (C3) at (330:1.2) {3};
    \node[nd] (C4) at (270:1.2) {4};
    \node[nd] (C5) at (210:1.2) {5};
    \node[nd] (C6) at (150:1.2) {6};
    \draw (C0) -- (C1);
    \draw (C0) -- (C2);
    \draw (C0) -- (C3);
    \draw (C0) -- (C4);
    \draw (C0) -- (C5);
    \draw (C0) -- (C6);
    \draw[very thick] (C1) -- (C2);
    \draw[very thick] (C2) -- (C3);
    \draw[very thick] (C3) -- (C4);
    \draw[very thick] (C4) -- (C5);
    \draw[very thick] (C5) -- (C6);
    \draw[very thick] (C6) -- (C1);
    \draw[dotted, thick] (C1) -- (C3);
    \draw[dotted, thick] (C3) -- (C5);
    \draw[dotted, thick] (C5) -- (C1);
    \draw[dotted, thick] (C2) -- (C4);
    \draw[dotted, thick] (C4) -- (C6);
    \draw[dotted, thick] (C6) -- (C2);
    \draw[dotted, thick] (C1) to[bend right=25] (C4);
    \draw[dotted, thick] (C2) to[bend right=25] (C5);
    \draw[dotted, thick] (C3) to[bend right=25] (C6);
  \end{tikzpicture}
  \caption{Wheel-graph instances for $n=4,5,6$. Depot edges (cost $c_e=1$) are thin,
           boundary edges (cost $c_e=1$) are bold,
           and diagonal edges (cost $c_e=2$) are dotted}
  \label{fig:wheel_graphs}}
\end{figure}

We consider the OR policy, under which we define the \textit{wheel-graph instances} by taking demands $\xi_i \sim \text{Bern}(\mu)$, $\mu \in (0,1)$, for each $i \in N$, a single vehicle, i.e., $M=\{1\}$, with capacity $Q=n{-}1$, and penalty parameters $b^F=b^P=0$. The recourse costs are then:
\[
  c_i^F = 2 \;\;\forall\, i\in N, \qquad
  c_e^P = c_{0,i}+c_{0,j}-c_{i,j} = 2-c_{i,j} =
  \begin{cases} 1, & \text{if } e=\{i,j\}\in E_1,\\
                 0, & \text{if } e=\{i,j\}\in E_2.
  \end{cases}
\]

Every feasible solution consists of a single route $(0,p,0)$, for $p=(i_1,\dots,i_n)$ some permutation of~$N$. Call such a route \emph{boundary} if its $n-1$ customer edges $\{i_j, i_{j+1}\}$ all lie in~$E_1$, and \emph{nonboundary} otherwise. There are $n$ boundary routes; for each $k \in \{1,\dots,n\}$, we denote by $(0,p^{(k)},0)$ the one omitting edge~$e_k$, and by~$x^{(k)}$ the corresponding solution.

\begin{lemma}\label{lem:wheel}
On wheel-graph instances, (i) every boundary route has expected recourse cost $q^*:=\mu^{n-1}\min\{2\mu, 1\}$; (ii) every nonboundary route has expected recourse cost~$0$; and (iii) the optimal solutions are exactly the $n$~boundary routes, with optimal value $Z^* = n+1+q^* \in (n+1,n+2)$.
\end{lemma}

\begin{proof}
(i) Let $(0,p,0)=(0,i_1,\dots,i_n,0)$ be a boundary route. The recourse cost-to-go after serving customer~$i_{n-1}$ is $F_{i_{n-1}}^{p}(q)=0$ if $q \geq 1$ and $F_{i_{n-1}}^{p}(0) = \min\{c^F_{i_n}\mu,\, c^P_{i_{n-1},i_n}\} = \min\{2\mu, 1\}$ otherwise. Since $Q=n-1$ and each demand lies in $\{0,1\}$, no failure can occur before the last customer, and the residual capacity after serving customer~$i_j$, $j \leq n-2$, is at least $n-1-j \geq 1$. The cost-to-go after serving $i_j$ is thus at most $\mu^{n-1-j}\cdot F_{i_{n-1}}^{p}(0) \leq \mu < 1 = c^P_{i_j,i_{j+1}}$, so a preventive return is never optimal and thus $\mathcal{Q}^{\text{OR}}_{p} = \mu^{n-1} \cdot F_{i_{n-1}}^{p}(0) = q^*$.

(ii) A nonboundary route traverses at least one diagonal edge $e \in E_2$, with $c^P_e = 0$. A costless preventive return at this edge restores capacity~$Q = n-1$, which suffices for all remaining customers. Hence $\mathcal{Q}^{\text{OR}}_p = 0$.

(iii) A boundary route uses $n-1$ boundary edges of cost~$1$ and two depot edges of cost~$1$, for a first-stage cost of~$n+1$. A nonboundary route uses at least one diagonal edge of cost~$2$ in place of a boundary edge of cost~$1$, for a first-stage cost of at least~$n+2$. Since $q^* \in (0,1)$, the total cost of every boundary route is $Z^* = n+1+q^* \in (n+1,n+2)$, strictly less than the total cost $\geq n+2$ of any nonboundary route.
\end{proof}

\paragraph{Convergence results.} Let RMP denote the relaxed master problem obtained from~MP by relaxing all P-cuts~\eqref{DL:p-cut} and S-cuts~\eqref{DL:s-cut}. Without additional cutting planes, RMP has optimal value~$n+1 < n+1+q^* = Z^*$, since any boundary route with $\theta = 0$ is feasible. We now show that a single E-cut suffices to close this gap (Theorem~\ref{thm:one-ecut}), whereas at least~$n$ cuts from the families of valid inequalities proposed by~\cite{parada2024disaggregated} and~\cite{hoogendoorn2023improved} are needed (Theorems~\ref{thm:n-DL} and~\ref{thm:n-HS}).

\begin{theorem}\label{thm:one-ecut}
On wheel-graph instances, the optimal value of~RMP augmented with a single suitably chosen E-cut~\eqref{DL:e-cut} is~$Z^*$.
\end{theorem}

\begin{proof}
Take $S=N$ and $E_S=E_1$. Every feasible route using only boundary edges is a boundary route, so $\bar{\mathcal{L}}(E_1,1)=q^*$. The corresponding E-cut reads:
\begin{equation}\label{eq:wheel-ecut}
  \sum_{i\in N}\theta_i
  \geq
  q^*\Bigl(\sum_{e\in E_1} x_e - n + 2\Bigr).
\end{equation}
Let $x$ be a feasible solution and $(0,p,0)$ the route it forms. The right-hand side of~\eqref{eq:wheel-ecut} evaluates to~$q^*$ if $(0,p,0)$ is a boundary route, and is nonpositive otherwise. Therefore, by Lemma~\ref{lem:wheel}, $\min\{\sum_{i\in N}\theta_i : \eqref{eq:wheel-ecut},\; \theta \geq 0\} = \mathcal{Q}^{\text{OR}}(x)$.
\end{proof}

\begin{theorem}\label{thm:n-DL}
On wheel-graph instances, the optimal value of~RMP augmented with any collection of fewer than~$n$ P-cuts~\eqref{DL:p-cut} and S-cuts~\eqref{DL:s-cut} is strictly less than~$Z^*$.
\end{theorem}

\begin{proof}
Any strict subset $S \subset N$ satisfies $\sum_{i \in S}\xi_i \leq |S| \leq n-1 = Q$ with probability one, so $\mathcal{L}(S,1) = 0$. Moreover, any nonboundary route $(0,p,0)$ covers $N$ and has $\mathcal{Q}^{\text{OR}}_p = 0$, so $\mathcal{L}(N,1) = 0$. Every S-cut is thus trivial. The P-cut of path~$p$ has coefficient $\mathcal{Q}^{\text{OR}}_p = q^*$ if $(0,p,0)$ is a boundary route, and $\mathcal{Q}^{\text{OR}}_p = 0$ otherwise. The P-cut \eqref{DL:p-cut} for~$p=p^{(k)}$ evaluates to~$q^*$ at~$x^{(k)}$, since $p^{(k)}(x^{(k)})=n-1=|N(p^{(k)})|-1$, but to~$0$ at any~$x^{(j)}$, $j \neq k$, since $p^{(k)}(x^{(j)})=n-2$. Each nontrivial P-cut thus enforces $\sum_{i \in N}\theta_i \geq q^*$ at a single optimal solution. If fewer than~$n$ are added, some optimal solution~$x^{(k)}$ remains uncovered, and $(x^{(k)}, \theta = 0)$ is feasible with objective value $n+1 < Z^*$.
\end{proof}

We now turn to the valid inequalities from \cite{hoogendoorn2023improved}. First, we note that, for single-route instances where there exists a feasible solution with zero recourse cost (in particular, for wheel-graph instances), all the families of valid inequalities therein are subsumed by the partial route-split inequalities \eqref{eq:prs}.

\begin{remark}\label{rem:subsumption}
For $M=\{1\}$ and $\mathcal{L}(N,1) = 0$, the partial route-split inequalities subsume all the other valid inequalities from~\cite{hoogendoorn2023improved}:
\begin{enumerate}\renewcommand{\labelenumi}{(\roman{enumi})}
  \item A \emph{route-split inequality} \cite[equation~(15)]{hoogendoorn2023improved} is a special case of partial route-split inequality where $h$ is fully structured; see \cite[Section~5.1]{hoogendoorn2023improved}.
  \item Since $M=\{1\}$, the \emph{partial route inequality} \cite[equation~(12)]{hoogendoorn2023improved} associated with a partial route $h$ is inactive at any feasible solution if $N(h)\neq N$. Otherwise, since $\mathcal{L}(N,1) = 0$, it reduces to $\sum_{i \in N}\theta_i \geq P_h\, W_h(x)$, which is implied by~\eqref{eq:prs} together with $\sum_{i \in N}\theta_i \geq \theta_{v(h)}$; see \cite[Section~5.2]{hoogendoorn2023improved}.
  \item Since $M=\{1\}$, a \emph{multi-route-split inequality} \cite[equation~(18)]{hoogendoorn2023improved} associated with a set $S \neq N$ is inactive at any feasible solution. For $S = N$ and a single route, it reduces to a partial route inequality; see \cite[Section~5.3]{hoogendoorn2023improved}.
\end{enumerate}
\end{remark}

For each $k \in \{1,\dots,n\}$, let $h^{(k)} = (\{0\}, \{p^{(k)}_1\}, \ldots, \{p^{(k)}_n\}, \{0\})$ denote the fully structured partial route corresponding to the boundary route~$(0,p^{(k)},0)$. Lemma~\ref{lem:prs-characterization} shows that these are the only partial routes that produce nontrivial partial route-split inequalities on wheel-graph instances.

\begin{lemma}\label{lem:prs-characterization}
On wheel-graph instances, the only nontrivial partial route-split inequalities~\eqref{eq:prs} are those for $h = h^{(k)}$, $k \in \{1,\dots,n\}$.
\end{lemma}

\begin{proof}
Fix a partial route $h = (U_0, U_1, \ldots, U_b)$. If $N(h) \neq N$, no feasible route adheres to $h$ since $M=\{1\}$, so $W_h(x) \leq 0$ at every feasible solution~$x$, and the inequality is never active. Now suppose $N(h)=N$. If $h$ is fully structured and corresponds to a nonboundary route, its unique adhering route has recourse~$0$ by Lemma~\ref{lem:wheel}, so $P_h=0$. If $h$ has an unstructured component~$U_a$, $a \in \{1,\dots,b-1\}$, we show that $P_h=0$ by exhibiting a nonboundary route that adheres to $h$. We consider three cases. (i)~If $|U_a|\ge 3$, then $U_a$ contains two customers that are not adjacent on $E_1$; they can be visited consecutively within $U_a$, resulting in a nonboundary route that adheres to $h$. (ii)~If $|U_a|=2$ and $U_a \in E_2$, every adhering route traverses the diagonal edge~$U_a$. (iii)~If $|U_a|=2$ and $U_a = \{i_1, i_2\} \in E_1$, take any customer $j \in (U_{a-1} \cup U_{a+1}) \setminus \{0\}$. Because $n \geq 4$ and $\{i_1, i_2\} \in E_1$, $i_1$ and $i_2$ cannot both be adjacent to $j$ on the cycle $E_1$, so at least one of $\{j, i_1\}, \{j, i_2\}$ lies in~$E_2$. Ordering $U_a$ so that this diagonal edge is visited yields a nonboundary adhering route.

The remaining case is $h = h^{(k)}$, $k \in \{1,\dots,n\}$, with $R_{h^{(k)}} = \{(0,p^{(k)},0)\}$ and $P_{h^{(k)}} = \mathcal{Q}^{\text{OR}}_{p^{(k)}} = q^*$. The resulting partial route-split inequality~\eqref{eq:prs} enforces $\theta_{v(h^{(k)})} \geq q^*$ at~$x^{(k)}$ and is inactive at every other feasible solution.
\end{proof}

\begin{theorem}\label{thm:n-HS}
On wheel-graph instances, the optimal value of~RMP augmented with any collection of fewer than~$n$ valid inequalities from~\cite{hoogendoorn2023improved} is strictly less than~$Z^*$.
\end{theorem}

\begin{proof}
Since $M=\{1\}$ and $\mathcal{L}(N,1) = 0$, Remark~\ref{rem:subsumption} implies that it suffices to consider partial route-split inequalities~\eqref{eq:prs}. By Lemma~\ref{lem:prs-characterization}, each nontrivial partial route-split inequality enforces $\theta_{v(h^{(k)})} \geq q^*$ at a unique optimal solution~$x^{(k)}$, $k \in \{1,\dots,n\}$. If fewer than~$n$ are added, some optimal solution~$x^{(k)}$ remains uncovered, and $(x^{(k)}, \theta = 0)$ is feasible with objective value $n+1 < Z^*$.
\end{proof}
}


\subsection{\rev{Edge-set selection strategy}} \label{sec:E-cuts:strategy}

Building E-cuts \eqref{DL:e-cut} involves a trade-off, as restricting the edge set $E_S$ increases the bound $\bar{\mathcal{L}}(E_S, m)$, but reduces the set of solutions for which the E-cut is active. We propose a simple edge-set selection strategy \rev{that proceeds by removing edges from a complete subgraph $E(S)$.} 

Let $x^{\nu}$ be a first-stage solution, either integer or fractional, and $S \subseteq N$ an arbitrary set of customers. \rev{Let $\bar{c}^P(x^{\nu}, S) := \min\{c^P_e : e \in E(S), \  x^{\nu}_e > 0\}$ denote the minimal preventive return cost among the edges of $E(S)$ active at $x^{\nu}$. We obtain the E-cut associated with the pair $(x^{\nu}, S)$ by excluding from $E(S)$ the edges whose preventive return cost is strictly less than $\bar{c}^P(x^{\nu}, S)$, which yields the edge-set $E_S = E_S(x^{\nu}) := \{e \in E(S) \mid c^P_e \geq \bar{c}^P(x^{\nu}, S)\}$.} This selection ensures that the resulting E-cut \eqref{DL:e-cut} is active for the current solution. 

\rev{We illustrate this strategy on the wheel-graph instances of Section~\ref{sec:E-cuts:wheel}. Suppose RMP is solved to optimality. Without valid inequalities on the recourse, the first-stage cost alone determines the solution, and the master problem returns a boundary route $(0,p^{(k)},0)$, $k \in \{1,\dots,n\}$, with objective value $n+1$ (Lemma~\ref{lem:wheel}). Since all the active customer edges are boundary edges $e \in E_1$ with $c^P_e = 1$, the edge-set selection strategy for $x^{\nu} = x^{(k)}$ and $S=N$ yields $c^P_{e^{\nu}} = 1$ and $E_S(x^{\nu}) = \{e \in E(N) \mid c^P_e \geq 1\} = E_1$. This is precisely the edge set used in Theorem~\ref{thm:one-ecut}. The resulting E-cut, with $\mathcal{L}_{E_S} = \bar{\mathcal{L}}(E_1,1) = q^*$, thus provides an exact representation of the OR recourse function.}

\subsection{Lower bounds on the OR recourse function}\label{sec:bounds}

The E-cut~\eqref{DL:e-cut} requires a lower bound $\mathcal{L}_{E_S}$ on $\bar{\mathcal{L}}(E_S, m_{E_S})$, the smallest recourse cost achievable by an admissible partition of a set of customers $S$ into paths formed of edges in $E_S$. In this section, we develop such lower bounds under the OR policy. \rev{By Proposition~\ref{prop:E-cuts-generalize}, the S-cut is the special case $E_S = E(S)$ of the E-cut, with $\mathcal{L}(S, m_S) = \bar{\mathcal{L}}(E_S, m_{E_S})$, so the same constructions yield lower bounds $\mathcal{L}_S$ for the S-cut~\eqref{DL:s-cut}.} In Section~\ref{subsubsec:RecLBMV_all}, we present a general method that is applicable to any demand distribution. In Section~\ref{subsubsec:RecLBMV_poisson}, this method is specialized to the case of Poisson-distributed demands.
 
\subsubsection{General lower bound} \label{subsubsec:RecLBMV_all}
Consider a set of customers $S \subseteq N$, \rev{an edge set $E_S \subseteq E(S)$}, and a number of routes $m_{E_S} \in \{1,\dots,\bar{m}(E_S)\}$. The lower bound we derive on $\bar{\mathcal{L}}(E_S, m_{E_S})$ depends on an indexing $i_1, \dots, i_{|S|}$ of the customers of $S$; this indexing is a free parameter, and our specific choice is described at the end of this subsection.

Fix a customer indexing, and consider any partition $\pi \in \bar{\Pi}(E_S, m_{E_S})$. Since the fleet is homogeneous, we may arbitrarily index the paths of $\pi$ and their vehicles as $k \in \{1,\dots,m_{E_S}\}$. We do so such that customer $i_j$ is visited by some vehicle $k \leq j$; equivalently, every customer visited by vehicle $k$ belongs to $\{i_k,\dots,i_{|S|}\}$. 
 
From there, $c^F_{(k)} := \min_{j \geq k} c^F_{i_j}$ is the smallest failure cost accessible to vehicle $k$ and \rev{$c^P_{(k)} := \min_{\{i_j,i_\ell\} \in E_S} \{ c^P_{i_j,i_\ell} : j,\ell \geq k \}$ is the smallest preventive return cost accessible to vehicle $k$, with $c^P_{(k)} := +\infty$ if no such edge exists}. Finally, we denote by $c^R_{(k)} := \min\{c^F_{(k)},c^P_{(k)}\}$ the cost of the cheapest recourse action vehicle $k$ can perform.

Let $\bar\mu \in \mathbb{R}_{>0}$ be such that each $\mu_i$, $i \in S$, is an integer multiple of $\bar\mu$; if $\{\mu_i\}_{i \in S} \subseteq \mathbb{N}$, $\bar\mu$ may be taken as $\mathrm{GCD}(\{\mu_i\}_{i \in S})$. \rev{Define the total expected demand of the customers of $S$, the number of groups of $\bar{\mu}$ units this yields, and the maximum number of groups of $\bar{\mu}$ units that can be assigned to a route while respecting the load factor requirement as:
\[
D := \sum_{i \in S}\mu_i, \qquad \bar D := D/\bar\mu, \qquad \bar n := \lfloor fQ/\bar\mu \rfloor.
\]}
\rev{For each $d \in \{0,\bar\mu,\dots,\bar n \bar\mu\}$}, let $\rho(d)$ satisfy
\begin{equation}\label{eq:rho_lower_envelope}
\rho(d)
\leq
\mathds{P}\!\left(\sum_{i \in S'} \xi_i > Q\right)
\quad
\rev{\text{for every } S' \subseteq S \text{ such that } \sum_{i \in S'}\mu_i = d,}
\end{equation}
\rev{with $\rho(d) := 0$ when no such $S'$ exists. The quantity $\rho(d)$ is a lower bound on the probability that a vehicle assigned a set of customers with total expected demand $d$} must perform at least one recourse action (either a failure or a preventive restocking trip) to complete its route. Since every recourse action of vehicle $k$ costs at least $c^R_{(k)}$, the quantity $g_k(d) := \rho(d) \, c^R_{(k)}$
is a lower bound on the expected recourse cost of vehicle $k$ under the OR policy whenever the total expected demand of the customers it visits is $d$.

A lower bound on $\bar{\mathcal{L}}(E_S, m_{E_S})$ can therefore be obtained by partitioning the expected demands $\{\mu_i\}_{i \in S}$ over the $m_{E_S}$ vehicles to minimize $\sum_{k=1}^{m_{E_S}} g_k(d_k)$. \rev{We relax this set-partitioning problem to the assignment problem \eqref{genLB:obj}--\eqref{genLB:demand}, in which a multiple of $\bar{\mu}$ units of expected demand must be assigned to each vehicle rather than a specific subset of $\{\mu_i\}_{i \in S}$.}
\begingroup
\allowdisplaybreaks
\begin{align}
    \mathcal{L}^1_{E_S} := \min & \sum_{k=1}^{m_{E_S}} \sum_{\ell=0}^{\bar n} g_k(\ell \bar\mu) \, y_{k\ell} \label{genLB:obj} \\
    \text{s.t.} & \sum_{\ell=0}^{\bar n} y_{k\ell} = 1, & \rev{\forall} k \in \{1,\dots,m_{E_S}\}, \label{genLB:assignment} \\
    & \sum_{k=1}^{m_{E_S}} \sum_{\ell=0}^{\bar n} \ell \, y_{k\ell} = \bar D, \label{genLB:demand} \\
    & y_{k\ell} \in \{0,1\}, & \rev{\forall} k \in \{1,\dots,m_{E_S}\},\, \rev{\forall} \ell \in \{0,\dots,\bar n\}. \label{genLB:vars_y}
\end{align}
\endgroup
\rev{Constraint~\eqref{genLB:assignment} assigns one demand level to each vehicle, and constraint~\eqref{genLB:demand} requires $\bar D$ groups of $\bar\mu$ units to be assigned in total.} The separability of the objective over vehicles enables a dynamic-programming solution. For each $k \in \{1,\dots,m_{E_S}\}$, the minimum cost of distributing $\bar d \in \{0,\dots,\bar D\}$ groups of $\bar\mu$ units of expected demand into vehicles $1$ to $k$ is given by:
\begin{equation}\label{LB_kd_DP}
G_{k}(\bar{d}) := \begin{cases}
        \min\limits_{y \in \{0,1,\dots,\min\{\bar{d}, \bar{n}\}\}}
        \bigl(g_{k}(y\bar{\mu}) + G_{k-1}(\bar{d}-y)\bigr),
        & \text{if } k \in \{2,\dots,m_{E_S}\},\\[2pt]
        g_{1}(\bar{d}\bar{\mu}),
        & \text{if } k = 1 \text{ and } \bar{d} \leq \bar{n},\\
        +\infty,
        & \text{if } k = 1 \text{ and } \bar{d} > \bar{n}.
    \end{cases}
\end{equation}
In particular, \rev{we show in Proposition \ref{prop:LB1}} that the cost $\mathcal{L}^1_{E_S} = G_{m_{E_S}}(\bar D)$ \rev{of distributing all $\bar D$ groups across the $m_{E_S}$ vehicles yields a valid cut coefficient.}
 
\begin{proposition}\label{prop:LB1}
For $\mathcal{Q}=\mathcal{Q}^{\mathrm{OR}}$, $\mathcal{L}^1_{E_S}$ is a valid coefficient for the E-cut~\eqref{DL:e-cut}, i.e., $\mathcal{L}^1_{E_S} \leq \bar{\mathcal{L}}(E_S,m_{E_S})$. \rev{In particular, $\mathcal{L}^1_S := \mathcal{L}^1_{E(S)} \leq \mathcal{L}(S,m_S)$ is a valid coefficient for the S-cut~\eqref{DL:s-cut}.}
\end{proposition}

\begin{proof}\rev{
    Fix a partition $\pi \in \bar\Pi(E_S, m_{E_S})$. Index the vehicles serving the paths of $\pi$ as $\{p_k\}_{k=1}^{m_{E_S}}$ so that customer $i_j$ is visited by some vehicle $k \leq j$. Write $d_k = \sum_{i \in N(p_k)} \mu_i$. By construction, $\mathcal{Q}^{\mathrm{OR}}_{p_k} \geq g_k(d_k)$. Since each $\mu_i$ is an integer multiple of $\bar\mu$ and $d_k \leq fQ$, the assignment $y_{k\ell} = 1$ if $\ell\bar\mu = d_k$ and $y_{k\ell} = 0$ otherwise is feasible for~\eqref{genLB:assignment}--\eqref{genLB:vars_y}, so $\sum_k g_k(d_k) \geq \mathcal{L}^1_{E_S}$. Combining with the per-vehicle bound yields $\sum_k \mathcal{Q}^{\mathrm{OR}}_{p_k} \geq \mathcal{L}^1_{E_S}$, and since $\pi$ was arbitrary, $\bar{\mathcal{L}}(E_S, m_{E_S}) \geq \mathcal{L}^1_{E_S}$. }
\end{proof}

\paragraph{Implementation.}
We select the customer indexing greedily. At each iteration $k = 1,\dots,m_{E_S}-1$, we assign to index $i_k$ the unassigned customer of $S$ that maximizes the minimum recourse cost $c^R_{(k+1)}$ for vehicle $k+1$, breaking ties by smallest failure cost. The remaining customers are indexed arbitrarily. 

We apply $\mathcal{L}^1_{E_S}$ only to instances in which the demands are identically distributed with expectation $\mu$. For this case, we can select $\bar\mu = \mu$ and precompute the probability of mandatory recourse action as $\rho(t\mu) = \mathds{P}\!\left(\sum_{i=1}^t \xi_i > Q\right)$ for each number of customers $t \in \{0, \dots, \lfloor fQ/\mu \rfloor\}$ that can appear on a route.

\subsubsection{Distribution-specific lower bound} \label{subsubsec:RecLBMV_poisson}
\rev{We refine the bound of Section~\ref{subsubsec:RecLBMV_all} for Poisson-distributed demands by replacing the per-vehicle lower bounding function $g_k$ by an alternative bound $\widetilde{g}_k$. This bound exploits the additivity property of the Poisson distribution and is based on the expected recourse cost-to-go under the OR policy rather than on the probability of a mandatory recourse action.}
 
Consider a path $p_k$ visited by vehicle $k$ with $N(p_k) \subseteq \{i_k, \dots, i_{|S|}\}$. Decompose each customer $i \in N(p_k)$ into $\mu_i / \bar\mu$ \emph{subcustomers} with i.i.d.\ Poisson$(\bar\mu)$ demands; by additivity of the Poisson distribution, the total demand of the subcustomers of $i$ has the same distribution as $\xi_i$. Let $p'_k$ be the path obtained from $p_k$ by visiting all subcustomers of the same group consecutively. The OR policy on $p_k$ is a restricted version of the OR policy on $p'_k$ that prohibits preventive returns between subcustomers of the same group, so $\mathcal{Q}^{\mathrm{OR}}_{p'_k} \leq \mathcal{Q}^{\mathrm{OR}}_{p_k}$. Replacing each failure cost on $p'_k$ by $c^F_{(k)}$ and each preventive return cost by $c^P_{(k)}$ in the cost-to-go recursion~\eqref{def:cost-to-go} yields a lower bound on $\mathcal{Q}^{\mathrm{OR}}_{p'_k}$ that depends on path $p'_k$ only through the number of subcustomers it visits. Concretely, for a residual capacity $q \in \{0, \dots, Q\}$ and $\bar d \in \{1, \dots, \bar n\}$ subcustomers remaining, the resulting cost-to-go is:
\begin{equation}
\label{F_tilde_DP}
\hspace{-0.2cm}
\widetilde{F}^k_{\bar{d}}(q) := \min
\begin{cases}
\widetilde{H}^k_{\bar{d}}(q) := & \sum\limits_{s=0}^{+\infty} \left[ c^F_{(k)}\Psi(s,q) + \widetilde{F}^k_{\bar{d}-1}\!\left(\Psi(s,q)Q + q - s\right) \right] \rho^s, \\
\widetilde{H}^{*k}_{\bar{d}} := & c^P_{(k)} + \widetilde{H}^k_{\bar{d}}(Q),
\end{cases}\hspace{-0.19cm}
\end{equation}
with boundary condition $\widetilde{F}^k_0(q) := 0$ for all $q \in \{0, \dots, Q\}$, and $\rho^s$ the Poisson$(\bar\mu)$ probability mass at $s$. Setting $\widetilde{g}_k(\bar d \bar\mu) := \widetilde{F}^k_{\bar d}(Q)$ yields a per-vehicle lower bound that satisfies $\widetilde{g}_k(d_k) \leq \mathcal{Q}^{\mathrm{OR}}_{p_k}$ for every feasible path $p_k$ with total expected demand $\sum_{i \in N(p_k)} \mu_i = d_k$. Let $\mathcal{L}^2_{E_S}$ denote the optimal value of the assignment problem~\eqref{genLB:obj}--\eqref{genLB:vars_y} with $g_k$ replaced by $\widetilde{g}_k$. Replacing $g_k$ by $\widetilde{g}_k$ in the proof of Proposition~\ref{prop:LB1} yields Proposition~\ref{prop:LB2}.
 
\begin{proposition}\label{prop:LB2}
For $\mathcal{Q}=\mathcal{Q}^{\mathrm{OR}}$, if $\xi_i \sim \mathrm{Poisson}(\mu_i)$ for all $i \in S$, then $\mathcal{L}^2_{E_S}$ is a valid coefficient for the E-cut~\eqref{DL:e-cut}, i.e., $\mathcal{L}^2_{E_S} \leq \bar{\mathcal{L}}(E_S, m_{E_S})$. \rev{In particular, $\mathcal{L}^2_S := \mathcal{L}^2_{E(S)} \leq \mathcal{L}(S, m_S)$ is a valid coefficient for the S-cut~\eqref{DL:s-cut}.}
\end{proposition}

\paragraph{Implementation.}
We compute $\mathcal{L}^2_{E_S}$ using the same greedy indexing as for $\mathcal{L}^1_{E_S}$ and the dynamic-programming recursion~\eqref{LB_kd_DP} with $g_k$ replaced by $\widetilde{g}_k$. The values $\widetilde{g}_k(\bar d \bar\mu)$, $\bar d \in \{0, \dots, \bar n\}$, are precomputed for each vehicle $k$ via the cost-to-go recursion \eqref{F_tilde_DP}.

\section{Implementation of the DL-shaped algorithm} \label{sec:implementation}

We now describe our B\&C implementation of the DL-shaped method. At each node of the B\&B tree, we solve the LP relaxation of the current master problem and separate rounded capacity inequalities \eqref{cap_cut}, P-cuts \eqref{DL:p-cut}, S-cuts \eqref{DL:s-cut}, and E-cuts \eqref{DL:e-cut} from the resulting solution $(x^\nu, \{\theta_i^\nu\}_{i \in N})$. The rounded capacity inequalities are separated heuristically using the CVRPSEP package \cite{Lysgaard2003}. The candidate DL-shaped cuts we separate then depend on the structure of the solution $x^\nu$. We evaluate (i) the S-cuts and E-cuts of the customer sets returned by CVRPSEP, (ii) the S-cuts and E-cuts of the connected components of $x^\nu$ when it is fractional, together with the P-cut of any such component that forms a path, and (iii) all three families of cuts for the subpaths of $\mathcal{P}(x^\nu)$ when $x^\nu$ is integer and no rounded capacity inequality was added in this iteration. In case~(iii), we retain only the six most violated cuts of each type. Algorithm~\ref{alg:dl-shaped} summarizes the separation procedure.

\begin{algorithm}[H]
\caption{Separation procedure at a B\&B node}\label{alg:dl-shaped}
\begin{algorithmic}[1]
\State Solve the LP relaxation of the master problem; obtain $(x^\nu, \{\theta_i^\nu\}_{i \in N})$
\State Separate violated rounded capacity inequalities \eqref{cap_cut} via CVRPSEP
\For{each set $S$ associated with a violated rounded capacity inequality}
    \State Add the S-cut and E-cut of $S$ if violated
\EndFor
\If{$x^\nu$ is fractional}
    \For{each connected component $S$ of $x^\nu$}
        \State Add the S-cut and E-cut of $S$ if violated
        \If{$|\{e \in E(S) : x^{\nu}_e > 0\}| = |S|-1$}
            \State Add the P-cut of the path covering $S$ if violated
        \EndIf
    \EndFor
\ElsIf{$x^\nu$ is integer and no rounded capacity inequality was added}
    \For{each subpath $p$ of $\mathcal{P}(x^\nu)$ with $S = N(p)$}
        \State Evaluate the P-cut of $p$, the S-cut of $S$, and the E-cut of $E_S(x^\nu)$
    \EndFor
    \State Add the six most violated P-cuts, S-cuts, and E-cuts
\EndIf
\end{algorithmic}
\end{algorithm}

The procedure ends when all the nodes of the B\&B tree have been explored; the incumbent solution is then optimal. \rev{The initial relaxed master problem is the DL-shaped reformulation MP without P-cuts, and equipped with an initial pool of S-cuts.} This pool is composed of all the nontrivial S-cuts for sets of customers $S\subseteq N$ of cardinality $|S| \in \{2,3,4\}$ for instances with $n \leq 32$ customers, and of cardinality $|S| \in \{2,3\}$ for larger instances. For these S-cuts, we compute $\mathcal{L}(S,1)$ by enumeration and take $\mathcal{L}_S = \mathcal{L}(S,1)$. For the S-cuts and E-cuts generated in callbacks for a set $S \subseteq N$, we take $m_S = m_{E_S} = \Big\lceil \frac{1}{\lfloor fQ/\bar{\mu} \rfloor \bar{\mu}}\sum_{i \in S}\mu_i \Big\rceil$, where $\bar{\mu}=\text{GCD}(\{\mu_i\}_{i \in S})$. \rev{For the E-cuts, the edge-set $E_S$ is selected according to the procedure described in Section \ref{sec:E-cuts:strategy}. We use the Poisson-specific lower bound $\mathcal{L}^2_{E_S}$ of Proposition~\ref{prop:LB2} for instances with Poisson-distributed demands, and the general lower bound $\mathcal{L}^1_{E_S}$ of Proposition~\ref{prop:LB1} otherwise. The corresponding S-cut coefficients $\mathcal{L}^2_{S}$ and $\mathcal{L}^1_{S}$ are obtained as the special case $E_S = E(S)$.
}

\section{Computational results} \label{sec:experiments}
\rev{This section presents our computational study. The instances and computational setup are introduced in Section \ref{sec:experiments:instances}.} In Section \ref{sec:experiments:e-cuts}, we evaluate the impact of our new valid inequalities on the overall performance of the DL-shaped algorithm. In Section \ref{sec:experiments:existing}, we compare our algorithm to state-of-the-art methods from the literature on standard benchmark instances of the VRPSD under the OR policy. In Section \ref{sec:experiments:characteristics}, we report results on a new set of instances and discuss instance characteristics that affect the performance of our algorithm. 

\subsection{Instances and experimental setup} \label{sec:experiments:instances}
To compare our algorithm with methods from the literature, we use two existing sets of benchmark instances. The first one, proposed by \cite{louveaux2018exact}, is composed of 32 instances in which the demands are identically distributed according to a discrete symmetric triangular distribution. The second set was generated based on 90 instances (sets A, B, E, F, and P) of the CVRPLIB repository \cite{uchoa2017new}. The customer demands follow the Poisson distribution with expected values matching the deterministic demands from the original CVRPLIB instances.

In addition, we adapt two benchmark sets from \cite{jabali2014partial} and \cite{parada2024disaggregated}. These instances were originally designed for the VRPSD under the DTD policy, and use normally distributed demands. The set of \cite{jabali2014partial} contains 270 instances, where the number of customers ranges from 40 to 80, and the number of vehicles ranges from two to four. That of \cite{parada2024disaggregated} comprises 1,980 instances with the number of customers and the number of vehicles ranging from 20 to 120 and two to seven. We modify these instances by assuming that the customer demands are Poisson-distributed, with the same expectations as in the original instances. As the distance matrix provided in existing instances may violate the triangle inequality, we use the Floyd-Warshall algorithm to enforce this assumption in our experiments. To facilitate the comparison with previous works, the penalty parameters are set to $b^F = b^P = 0$ for all the instances, and the load factor parameter is set to $f=1$, unless otherwise specified.

We evaluate all four variants of the VRPSD defined in Section~\ref{subsec:VRPSDform}. Prior works have predominantly benchmarked under the ECCs, as relaxing them yields substantially harder instances. In particular, by Theorem~2 of \cite{yang2000stochastic}, the basic variant under the OR policy always admits an optimal solution comprising a single route. This variant is notoriously challenging for exact methods~\cite{florio2020optimal}, as the decomposition benefit of column generation is lost in the case of single-route solutions, and a large number of feasible routes makes the cutting-plane approximation of the recourse function less efficient in B\&C methods. Only \cite{hoogendoorn2025evaluation} have previously solved the VRPSD in the FRC and basic variants. 

The DL-shaped method was implemented in C{\footnotesize ++} with CPLEX version 22.1 using callbacks, and compiled using g{\tiny ++}. It was run single-threaded on a computing cluster node with an AMD EPYC™ Rome 7532 2.4 GHz processor and 48 GB of RAM. We used an adaptive large neighborhood search algorithm \cite{pisinger2007general} to warm-start the model. A time limit of 1 hour per instance was applied for all the experiments.

\subsection{Impact of the new valid inequalities} \label{sec:experiments:e-cuts}
In this section, we evaluate the impact of our new valid inequalities on the overall performance of the DL-shaped algorithm. We solve the 270 instances adapted from \cite{jabali2014partial} with the implementation described in Section \ref{sec:implementation}, first with the E-cuts disabled, and then enabled. In Table \ref{Tab:Jabali_results}, the instances are grouped based on their number of nodes and vehicles. For example, 60\_2 refers to instances with $|N_0|=60$ nodes in which two vehicles must be used, i.e., $M=\{2\}$. For each variant of the algorithm and each group of instances, we report the number of instances solved to optimality, the average solving time (in seconds), the average optimality gap (in percentage), and the average number of nodes explored in the B\&B tree. For all experiments, we report average times and node counts based only on instances solved to optimality, and optimality gaps based only on instances that could not be solved within the time limit. Columns Time$^*$ and Nodes$^*$ of Table \ref{Tab:Jabali_results} report the average results of the DL-shaped algorithm with E-cuts on the instances that could be solved to optimality with both implementations. \rev{All the instances solved without E-cuts were also solved with E-cuts, so these two columns allow direct comparison with the Time and Nodes columns of the DL-shaped algorithm without E-cuts.}

\renewcommand{\arraystretch}{1.0} 
\begin{table}[H]
\caption{Impact of the E-cuts on performance, instances adapted from \cite{jabali2014partial}} \label{Tab:Jabali_results}
\centering
    \small
    \tighttable
    \begin{tabular*}{\textwidth}{@{\extracolsep{\fill}}crrrrrrrrrr}
\toprule
\multicolumn{1}{c}{\multirow{2}{*}{Set }} & \multicolumn{4}{c}{\textbf{DL-shaped without E-cuts}} & \multicolumn{6}{c}{\textbf{DL-shaped with E-cuts}} \\ \cmidrule(lr){2-5} \cmidrule(lr){6-11} 
 & Opt & \hspace{0.2cm} Gap & \hspace{0.2cm} Time & Nodes & Opt & Gap & Time & Nodes &
 Time$^*$ & Nodes$^*$ \\ \hline
60\_2 & 24 & 0.96 & 210 & 15,277 & 30 & 0.00 & 13  & 2,815 & 8  & 1,765 \\
70\_2 & 18 & 1.06 & 361 & 16,423 & 30 & 0.00 & 57  & 6,252 & 4  & 913  \\
80\_2 & 17 & 1.04 & 198 & 14,592 & 27 & 0.38 & 95  & 6,737 & 7  & 1,353 \\
50\_3 & 29 & 2.63 & 492 & 37,521 & 30 & 0.00 & 108 & 5,322 & 11 & 2,242 \\
60\_3 & 25 & 0.91 & 290 & 20,156 & 29 & 0.80 & 53  & 4,801 & 27 & 2,961 \\
70\_3 & 20 & 1.34 & 326 & 18,917 & 28 & 0.84 & 188 & 8,022 & 24 & 1,315 \\
40\_4 & 30 & 0.00 &  40 &  7,934 & 30 & 0.00 & 16  & 2,652 & 16 & 2,652 \\
50\_4 & 25 & 0.66 & 190 & 11,171 & 30 & 0.00 & 196 & 8,441 & 35 & 3,769 \\
60\_4 & 29 & 0.90 & 460 & 25,625 & 30 & 0.00 & 114 & 5,826 & 75 & 4,622 \\ \hline
All & 217 & 1.07 & 337 & 19,083 & 264 & 0.60 & 93 & 5,625 & 25 & 2,557 \\ \hline
\end{tabular*}
\end{table}

The results of Table \ref{Tab:Jabali_results} show that the E-cuts substantially improve the overall performance of our DL-shaped algorithm. Notably, their addition reduces the number of unsolved instances from 53 to 6. Furthermore, on the 217 instances that were solved by both versions of the algorithm, the E-cuts reduce the average solving time by a factor of more than 13, and the average number of explored nodes by a factor of more than 7. We highlight that these results are achieved based on the simple edge-set selection strategy described at the end of Section \ref{sec:E-cuts}, and that further refinement to this strategy might speed up computations even more. For the remainder of the experiments, the DL-shaped method is always implemented with the E-cuts enabled. 

\subsection{Comparison with state-of-the-art algorithms} \label{sec:experiments:existing}

In this section, we compare our DL-shaped algorithm with two integer L-shaped algorithms from the literature (LSG18, presented in \cite{louveaux2018exact} and HS25, presented in \cite{hoogendoorn2025evaluation}), and \rev{two BP\&C algorithms (FHM20, presented in \cite{florio2020new}, and FGHMV23, presented in \cite{florio2023recent}).} The results we report for each existing method are taken from the corresponding papers. Tables~\ref{tab:louveaux} and~\ref{tab:CVRP} present results for the instances of \cite{louveaux2018exact} and the CVRPLIB instances under the four variants defined in Section~\ref{subsec:VRPSDform}. Results from the literature are reported for the variants considered in each prior work.

\begin{table}[htbp]
    \caption{Comparison with existing methods, instances of \cite{louveaux2018exact}}
    \label{tab:louveaux}
    \centering
    \small
    \tighttable
    \setlength{\tabcolsep}{2pt}
    \begin{tabular*}{\textwidth}{@{\extracolsep{\fill}}ccrrrrrrrrrrrrrr}
    \toprule
    \multirow{2}{*}{Set} & \multirow{2}{*}{Variant} & \multicolumn{4}{c}{\textbf{LSG18}} &  & \multicolumn{4}{c}{\textbf{HS25}} &  & \multicolumn{4}{c}{\textbf{DL-shaped}} \\ \cmidrule{3-6} \cmidrule{8-11} \cmidrule{13-16} 
     &  &  & Time & Gap & Nodes &  & Opt & Time & Gap & Nodes &  & Opt & Time & Gap & Nodes \\ \hline
    E031\_09h & \multirow{4}{*}{ECC-FRC} & 7/8 & 205 & 2.33 & 57,594 &  & 8/8 & 8 & 0.00 & 2,036 &  & 8/8 & 7 & 0.00 & 583 \\
    E051\_05e &  & 7/8 & 639 & 0.13 & 50,526 &  & 8/8 & 29 & 0.00 & 2,677 &  & 8/8 & 7 & 0.00 & 985 \\
    E076\_07s &  & 7/8 & 3,426 & 1.03 & 162,910 &  & 8/8 & 242 & 0.00 & 5,283 &  & 8/8 & 186 & 0.00 & 5,159 \\
    E101\_08e &  & 6/8 & 1,580 & 1.20 & 77,235 &  & 6/8 & 98 & 0.68 & 1,501 &  & 7/8 & 442 & 0.74 & 16,595 \\ \hline
    E031\_09h & \multirow{4}{*}{ECC} & - & - & - & - &  & 8/8 & 8 & 0.00 & 2,096 &  & 8/8 & 8 & 0.00 & 711 \\
    E051\_05e &  & - & - & - & - &  & 8/8 & 30 & 0.00 & 2,813 &  & 8/8 & 6 & 0.00 & 851 \\
    E076\_07s &  & - & - & - & - &  & 8/8 & 164 & 0.00 & 4,308 &  & 8/8 & 97 & 0.00 & 3,549 \\
    E101\_08e &  & - & - & - & - &  & 6/8 & 77 & 0.71 & 1,282 &  & 7/8 & 417 & 0.94 & 16,110 \\ \hline
    E031\_09h & \multirow{4}{*}{FRC} & - & - & - & - &  & 3/8 & 42 & 5.90 & 9,032 &  & 8/8 & 106 & 0.00 & 11,874 \\
    E051\_05e &  & - & - & - & - &  & 0/8 & - & 3.47 & - &  & 5/8 & 899 & 1.44 & 25,621 \\
    E076\_07s &  & - & - & - & - &  & 0/8 & - & 5.38 & - &  & 4/8 & 70 & 1.99 & 4,045 \\
    E101\_08e &  & - & - & - & - &  & 0/8 & - & 2.81 & - &  & 5/8 & 520 & 1.19 & 14,335 \\ \hline
    E031\_09h & \multirow{4}{*}{Basic} & - & - & - & - &  & 1/8 & 752 & 7.77 & 53,625 &  & 4/8 & 48 & 3.65 & 5,035 \\
    E051\_05e &  & - & - & - & - &  & 0/8 & - & 4.67 & - &  & 3/8 & 38 & 2.71 & 5,349 \\
    E076\_07s &  & - & - & - & - &  & 0/8 & - & 5.66 & - &  & 3/8 & 42 & 2.74 & 1,942 \\
    E101\_08e &  & - & - & - & - &  & 0/8 & - & 3.78 & - &  & 3/8 & 1,083 & 2.02 & 8,286 \\ \hline
    All &  & \hspace{-1cm}27/32 \hspace{-0.22cm} &  &  &  &  & \hspace{-1cm}64/128\hspace{-0.22cm} &  &  &  &  & \hspace{-1cm}97/128\hspace{-0.22cm} &  &  &  \\ \hline
    \end{tabular*}
\end{table}

The results in Table~\ref{tab:louveaux} indicate that our DL-shaped method achieves state-of-the-art results for the instances of \cite{louveaux2018exact}. For both the ECC-FRC and ECC variants, we solve one of the two open instances from this benchmark set. The DL-shaped algorithm also consistently explores a smaller number of nodes and achieves better computing times than LSG18 and HS25. The advantage of the DL-shaped method is most significant on the challenging FRC and basic variants, where we solve a total of 35 instances to optimality, compared to only 4 for HS25. In these variants, the absence of ECCs allows for the construction of very long routes, which makes the set of feasible routes extremely large. As a consequence, valid inequalities that remain active over many routes become critical for efficiently approximating the recourse function. \rev{This is precisely the structural advantage of the DL-shaped cuts, whose activation depends only on the internal subpath structure of a solution, not on how the surrounding routes connect to the depot. By contrast, the partial-route-based inequalities implemented in HS25 use depot-incident edges in their activation functions, restricting their activation to routes whose first and last visited customers match a prescribed configuration. This broader activation of the DL-shaped cuts is the same mechanism that underlies Theorem~\ref{thm:ecut-prs} and the algorithmic benefits of the E-cuts illustrated on the wheel-graph instances of Section~\ref{sec:E-cuts:wheel}.} We do not report BP\&C results for the instances of \cite{louveaux2018exact} since, as discussed in \cite{florio2023recent}, BP\&C methods are not competitive on this group of instances due to their large customer-to-vehicle ratio.

\begin{table}[htbp]
    \caption{Comparison with existing methods, CVRPLIB instances \citep{uchoa2017new}}
    \label{tab:CVRP}
    \centering
    \small
    \tighttable
    \begin{tabular*}{\textwidth}{@{\extracolsep{\fill}}ccrrrrrrrrrrrr}
    \toprule
    \multicolumn{1}{c}{\multirow{2}{*}{Set}} & \multicolumn{1}{c}{\multirow{2}{*}{Variant}} & \multicolumn{3}{c}{\rev{\textbf{BP\&C}$^\dagger$}} &  & \multicolumn{3}{c}{\textbf{HS25}} &  & \multicolumn{3}{c}{\textbf{DL-shaped}} \\
    \cmidrule{3-5} \cmidrule{7-9} \cmidrule{11-13}
    & & \rev{Opt} & \rev{Time} & \rev{Gap} & & Opt & Time & Gap & & Opt & Time & Gap \\ \hline
    A & \multirow{5}{*}{ECC-FRC} & - & - & - & & 1/27 & 1,296 & - & & 4/27 & 759 & 10.14 \\
    B & & - & - & - & & - & - & - & & 2/23 & 1,910 & 9.18 \\
    E & & - & - & - & & 6/13 & 346 & - & & 5/13 & 19 & 8.92 \\
    F & & - & - & - & & - & - & - & & 1/3 & 5 & 1.78 \\
    P & & - & - & - & & 7/24 & 61 & - & & 8/24 & 412 & 10.40 \\ \hline
    A & \multirow{5}{*}{ECC} & \rev{20/27} & \rev{1,425} & \rev{0.31$^{(5/7)}$} & & 1/27 & 1,498 & - & & 6/27 & 923 & 8.27 \\
    B & & - & - & - & & - & - & - & & 2/23 & 1,266 & 5.95 \\
    E & & \rev{3/8} & \rev{480} & \rev{0.35$^{(4/5)}$} & & 6/13 & 158 & - & & 6/13 & 174 & 8.15 \\
    F & & - & - & - & & - & - & - & & 1/3 & 6 & 2.07 \\
    P & & \rev{20/23} & \rev{1,884} & \rev{7.07$^{(1/3)}$} & & 7/24 & 11 & - & & 8/24 & 333 & 7.46 \\ \hline
    A & \multirow{5}{*}{FRC} & - & - & - & & 0/27 & - & 20.39 & & 0/27 & - & 36.98 \\
    B & & - & - & - & & - & - & - & & 0/23 & - & 38.07 \\
    E & & - & - & - & & 0/13 & - & 17.62 & & 4/13 & 276 & 21.53 \\
    F & & - & - & - & & - & - & - & & 0/3 & - & 17.12 \\
    P & & - & - & - & & 4/24 & 399 & 20.35 & & 6/24 & 218 & 19.63 \\ \hline
    A & \multirow{5}{*}{basic} & - & - & - & & 0/27 & - & 26.49 & & 0/27 & - & 58.58 \\
    B & & - & - & - & & - & - & - & & 0/23 & - & 67.08 \\
    E & & - & - & - & & 0/13 & - & 29.46 & & 0/13 & - & 29.19 \\
    F & & - & - & - & & - & - & - & & 0/3 & - & 24.99 \\
    P & & - & - & - & & 1/24 & 1,055 & 24.59 & & 3/24 & 75 & 41.00 \\ \hline
    All & & \rev{43/58} & & & & 33/256 & & & & 56/360 & & \\ \hline
    \end{tabular*}
    \par\smallskip
    \begin{minipage}{\textwidth}
    \raggedright
    \small \rev{$^\dagger$ best result across FHM20 \citep{florio2020new} and FGHMV23 \citep{florio2023recent}. Gap superscripts of the form $(a/b)$ indicate that the average gap is computed over $a$ instances among the $b$ that could not be solved to optimality (the remaining $b-a$ have no reported lower bound)}
    \end{minipage}
\end{table}

Unlike the instances of \cite{louveaux2018exact}, the CVRPLIB instances are best-suited for BP\&C algorithms, with customers-per-route ratios rarely exceeding 10 when ECCs are imposed. \rev{FGHMV23 is an elementary BP\&C method that improves upon the dominance-based BP\&C algorithm FHM20 and represents the current state-of-the-art among BP\&C methods. Since FHM20 and FGHMV23 test overlapping but distinct subsets of these instances, we report for each instance the best result obtained by either method, and refer to this combined reference as BP\&C in Table~\ref{tab:CVRP}. The results show that BP\&C methods are dominant for the CVRPLIB instances in the ECC variant, jointly solving 43 of the 58 instances on which they were tested.} Among B\&C methods, our DL-shaped algorithm provides better results than HS25 on sets A and P, whereas HS25 appears to perform slightly better on set E in the ECC-FRC and ECC variants. Since the results of HS25 were obtained using an Intel Xeon W-2123 3.6 GHz processor, which is faster than the processor we used to test our DL-shaped algorithm, replication on the same computing infrastructure would be needed to properly determine the best-performing algorithm for these instances. What is unambiguous from these results is that the DL-shaped algorithm is dominant on the FRC and basic variants, where it solves 13 instances to optimality, compared to 5 for HS25. Overall, our algorithm solves 17 more instances than HS25 among the 256 CVRPLIB instances on which both methods were tested, and thus clearly achieves state-of-the-art results among B\&C methods.

\subsection{Results on new instances} \label{sec:experiments:characteristics}
In Table \ref{Tab:parada_results}, we present results for the new benchmark set adapted from \cite{parada2024disaggregated}. The instances are organized by the number of routes $M=\{\bar{m}\}$ and the number of customers $n$. Two key insights emerge from these experiments. First, for a fixed number of customers, increasing the number of vehicles leads to instances that are significantly more challenging for the DL-shaped method. While the instances with 30 customers and two vehicles can be solved in 18 seconds on average, only 2 out of 30 instances can be solved to optimality when the number of routes is increased to seven. Second, the DL-shaped method demonstrates excellent scalability with respect to the customer-to-vehicle ratio, solving 27 out of the 30 instances with 120 customers and two vehicles. \rev{This customer-to-vehicle ratio of 60 substantially exceeds what has been previously addressed in the exact VRPSD literature.} Indeed, the solved instances with the largest customer-to-vehicle ratio comprise 80 customers and two vehicles \rev{for the B\&C algorithms of \cite{hoogendoorn2023improved, hoogendoorn2025evaluation}}, and only 21 customers and two vehicles \rev{for the BP\&C methods of \cite{florio2020new, florio2023recent}}. These results confirm that our DL-shaped algorithm excels at solving instances with a small number of long routes\rev{, complementing the BP\&C methods, which dominate in the many-short-routes regime}. 

\begin{table}[htbp]
\caption{Results for the DL-shaped method, instances adapted from \cite{parada2024disaggregated}} \label{Tab:parada_results}
\centering
\small       
\tighttable
\setlength{\tabcolsep}{2pt}
\begin{tabular*}{\textwidth}{@{\extracolsep{\fill}}crrrrrrrrrrrrrr}
\toprule
\multicolumn{1}{c}{\multirow{2}{*}{ \diagbox{$n$}{$\bar{m}$} }} & \multicolumn{2}{c}{2} & \multicolumn{2}{c}{3} & \multicolumn{2}{c}{4} & \multicolumn{2}{c}{5} & \multicolumn{2}{c}{6} & \multicolumn{2}{c}{7} & \multicolumn{2}{c}{All} \\ \cmidrule(lr){2-3}\cmidrule(lr){4-5} \cmidrule(lr){6-7} \cmidrule(lr){8-9} \cmidrule(lr){10-11} \cmidrule(lr){12-13} \cmidrule(lr){14-15}
\multicolumn{1}{c}{} & Opt & Time & Opt & Time & Opt & Time & Opt & Time & Opt & Time & Opt & Time & Opt & Time \\ \hline
20  & 30  & 2   & 30  & 4   & 30  & 23  & 27 & 200    & 28 & 454  & 30 & 172  & 175 & 138 \\
30  & 30  & 18  & 29  & 135 & 27  & 314 & 19 & 441    & 7  & 1,673 & 2  & 2,797 & 114 & 339 \\
40  & 30  & 7   & 28  & 77  & 22  & 235 & 11 & 593    & 3  & 1,510 & 0  & -    & 94  & 197 \\
50  & 30  & 4   & 26  & 320 & 16  & 319 & 11 & 848    & 0  & -    & 0  & -    & 83  & 276 \\
60  & 30  & 48  & 27  & 325 & 16  & 582 & 5  & 1,112  & 0  & -    & 0  & -    & 78  & 322 \\
70  & 30  & 26  & 20  & 230 & 15  & 543 & 1  & 72     & 1  & 3,535 & 0  & -    & 67  & 256 \\
80  & 28  & 28  & 21  & 239 & 11  & 386 & 2  & 2,151  & 0  & -    & 0  & -    & 62  & 231 \\
90  & 29  & 103 & 20  & 416 & 10  & 377 & 3  & 508    & 0  & -    & 0  & -    & 62  & 268 \\
100 & 30  & 159 & 18  & 181 & 9   & 199 & 2  & 1,507  & 0  & -    & 0  & -    & 59  & 218 \\
110 & 28  & 108 & 18  & 389 & 8   & 627 & 1  & 565    & 0  & -    & 0  & -    & 55  & 284 \\
120 & 27  & 120 & 17  & 102 & 9   & 702 & 1  & 3,577  & 0  & -    & 0  & -    & 54  & 275 \\ \hline
All & 322 & 56  & 254 & 210 & 173 & 335 & 83 & 581    & 39 & 833  & 32 & 336  & 903 & 244\\ \hline
\end{tabular*}
\end{table}

\rev{
\section{Asymptotic analysis}\label{sec:asymptotics}
In this section, we complement our algorithmic analysis of the VRPSD with an asymptotic study of its optimal value. We consider customers drawn as i.i.d.\ (location, demand) pairs with locations in a compact region of the Euclidean plane. Building on classical asymptotic results of \cite{haimovich1985bounds} for the capacitated vehicle routing problem (CVRP) and the split-delivery vehicle routing problem (SDVRP), as well as on the Beardwood–Halton–Hammersley (BHH) theorem of \cite{beardwood1959shortest} for the TSP, we establish three results.
\begin{enumerate}
    \item Under both the DTD and OR policies, the asymptotic per-customer cost of the basic-VRPSD matches the SDVRP asymptotic rate (Theorem~\ref{thm:asymptotic}).
    \item Under the ECCs, this rate is multiplied by a factor in $[\alpha, \alpha+1]$, where $\alpha \in [1, 2)$ is a bin-packing constant determined by the vehicle capacity and the expected customer demands (Theorem~\ref{thm:ECC_rate}). Both endpoints of this band are approached arbitrarily closely by suitable distributions (Proposition~\ref{prop:ECC_tightness}).
    \item As a corollary, we resolve, in the asymptotic regime under study, an open question of \cite{hoogendoorn2025evaluation}: imposing the ECCs multiplies the optimal value of the basic-VRPSD by at most 3, and this bound is tight (Corollary~\ref{cor:ECC_worst_case}).
\end{enumerate}

Among the variants of the VRPSD discussed in Section~\ref{subsec:VRPSDform}, we do not consider those imposing the FRC, as fixing the number of routes a priori is not naturally compatible with the i.i.d.\ asymptotic regime under study.

The remainder of this section is structured as follows. Section~\ref{sec:asymptotics:setting} formalizes the setup. Section~\ref{sec:asymptotics:sdvrp} reviews the CVRP and SDVRP rates that anchor our analysis. Section~\ref{sec:asymptotics:accounting} establishes preliminary bounds on the DTD recourse function for i.i.d.\ demands. Sections~\ref{sec:asymptotics:main} and~\ref{sec:asymptotics:ECC} then present the asymptotic analyses of the basic-VRPSD and the ECC-VRPSD, respectively.

\subsection{Setting and assumptions}\label{sec:asymptotics:setting}
We consider the Euclidean space $\mathbb{R}^2$ equipped with the natural Euclidean distance, where the depot location $X_0$ is the origin. Let $(X_i, \xi_i)_{i \geq 1}$ be a sequence of i.i.d.\ random variables on $\mathcal{R} \times \mathbb{R}_{\geq 0}$, where $\mathcal{R} \subset \mathbb{R}^2$ is a compact set, and the demand $\xi_i$ of customer $i$ is independent of its location~$X_i$. Denote by $Q\in \mathbb{R}_{>0}$ the vehicle capacity, $\mu := \mathbb{E}[\xi_i]\in \mathbb{R}_{>0}$ the expected customer demand, and $\bar{d} := \mathbb{E}[\|X_i\|]\in \mathbb{R}_{>0}$ the expected distance from a customer to the depot. Let $L(p) := \sum_{j=0}^{t} \|X_{i_j} - X_{i_{j+1}}\|$ be the length of the route $(i_0{=}0, p, 0{=}i_{t+1})$ associated with the path $p = (i_1, \ldots, i_t)$. We assume recourse costs to be purely distance-based ($b^F = b^P = 0$); in particular, $c^F_i=2\|X_i\|$. 

\subsection{Asymptotic rates for the CVRP and the SDVRP}\label{sec:asymptotics:sdvrp}

In the CVRP, a fleet of identical vehicles must serve a set of customers with known demands, each assigned to a single vehicle, so as to minimize the total traveled distance. In a seminal paper, \cite{haimovich1985bounds} established asymptotic bounds for the CVRP with identical demands and unlimited fleet. Assume each demand equals $\mu$, with $\mu \leq Q$, so that a vehicle can serve at most $\kappa := \lfloor Q/\mu \rfloor$ customers. This integer rounding introduces a gap between the vehicle's nominal capacity $Q$ and its usable capacity $\mu\kappa$. Define the \emph{bin-packing constant}:
\begin{equation}\label{eq:alpha_def}
    \alpha := \frac{Q}{\mu\kappa} \in [1, 2),
\end{equation}
as the ratio of capacity to maximum usable capacity when packing items of size $\mu$ into bins of size $Q$. We have $\alpha = 1$ when $Q/\mu \in \mathbb{Z}$ (capacity is fully usable), and $\alpha < (\kappa + 1)/\kappa \leq 2$ otherwise. For the first $n$ points of the location sequence $(X_i)_{i \geq 1}$, denote by $V_n^{\text{CVRP}}(X)$ the optimal value of this CVRP. Writing the customer-count capacity of each vehicle as $\kappa = Q/(\mu\alpha)$, we obtain the following asymptotic rate from \cite{haimovich1985bounds}.
\begin{theorem}[\cite{haimovich1985bounds} Theorem~4(i)]\label{thm:haimovich_cvrp}
    Under the assumptions of Section~\ref{sec:asymptotics:setting} with $\mu \leq Q$,
    \begin{equation}\label{eq:unsplit_rate}
        \lim_{n \to \infty} \frac{V_n^{\text{CVRP}}(X)}{n} = \alpha \cdot \frac{2\mu\bar d}{Q} \qquad \text{a.s.}
    \end{equation}
\end{theorem}

In the same paper, \cite{haimovich1985bounds} extended their results to customers with heterogeneous demands, provided that each customer's demand may be split across multiple vehicles. This split-delivery variant of the CVRP was later formalized as the SDVRP by~\cite{dror1989savings} and has since been extensively studied~\cite{archetti2012vehicle}. Let $N = \{1, \dots, n\}$ be the first $n$ customers of the sequence $(X_i, \xi_i)_{i \geq 1}$. A solution of the SDVRP for these realized locations and demands consists of a set $\{(0, p^k, 0)\}_{k=1}^K$ of $K \in \mathbb{N}$ routes covering $N$, i.e., $\bigcup_{k=1}^K N(p^k) = N$, together with delivery quantities $\{y_i^k \geq 0 : i \in N(p^k)\}_{k=1}^K$ that jointly fulfill the demands and respect each vehicle's capacity:
\begingroup
\allowdisplaybreaks
\begin{alignat}{2}
    & \sum_{k : i \in N(p^k)} y_i^k = \xi_i, \qquad & \forall i \in N, \label{SDVRP:demands} \\
    & \sum_{i \in N(p^k)} y_i^k \leq Q, \qquad & \forall k \in \{1, \dots, K\}. \label{SDVRP:capacity}
\end{alignat}
\endgroup
The objective is to minimize the total traveled distance $\sum_{k=1}^K L(p^k)$. We denote by $V_n^{\text{SDVRP}}(X, \xi)$ the optimal value of this SDVRP. 
\begin{theorem}[\cite{haimovich1985bounds} Section~7, extension of Theorem~4(i)]\label{thm:haimovich_thm4}
    Under the assumptions of Section~\ref{sec:asymptotics:setting},
    \begin{equation}\label{eq:SDVRP_rate}
        \lim_{n \to \infty} \frac{V_n^{\text{SDVRP}}(X, \xi)}{n} = \frac{2\mu\bar d}{Q} \qquad \text{a.s.}
    \end{equation}
\end{theorem}

The SDVRP rate $2\mu\bar d/Q$ is the expected round-trip distance $2\bar d$ between the depot and a customer, multiplied by the ratio $\mu/Q$ of expected demand to vehicle capacity. The CVRP rate multiplies the SDVRP rate by the bin-packing constant $\alpha$, reflecting the capacity waste from integer rounding when demand cannot be split across multiple vehicles.

\subsection{Preliminary lemmas: DTD recourse function for i.i.d.\ demands}\label{sec:asymptotics:accounting}
Before establishing our main results, we develop two preliminary lemmas on the DTD recourse function for i.i.d.\ demands. These will support the upper-bound arguments of Sections \ref{sec:asymptotics:main} and \ref{sec:asymptotics:ECC}.

Consider the route formed by path $p = (i_1, \ldots, i_t)$. Under the DTD policy, a failure at position $j$ occurs whenever the cumulative demand $\sum_{k=1}^{j} \xi_{i_k}$ crosses a multiple of the vehicle capacity $Q$. When demands are i.i.d.\ and independent of locations, the expected number of failures at position $j$ is independent of the order of visit of the customers; we denote this quantity by the constant:
\begin{equation}\label{eq:phi_def}
    \phi_j := \sum_{l \geq 1} \mathbb{P}\!\left[\sum_{k=1}^{j-1} \xi_k \leq lQ < \sum_{k=1}^{j} \xi_k\right].
\end{equation}

\begin{lemma}\label{lem:accounting} Under the assumptions of Section~\ref{sec:asymptotics:setting}, the expected number of failures $\sum_{k=1}^j \phi_k$ for the first $j$ customers of a route executed under the DTD policy satisfies:
\begin{equation}\label{ineq:accounting}
    j\mu/Q - 1 \leq \sum_{k=1}^j \phi_k \leq j\mu/Q.
\end{equation}
\end{lemma}
\begin{proof}
    Let $R_j$ denote the total number of failures over the first $j \leq t$ customers of a route $(0, i_1, \ldots, i_t,0)$ executed under the DTD policy. The vehicle loads $Q$ units upon departure from the depot and an additional $Q$ units at each of these $R_j$ restocking trips, for a cumulative load of $(R_j+1)Q$. Since every loaded unit is either delivered or remains as the residual capacity $q_{j+1} \in [0, Q]$ after serving the $j$-th customer, $(R_j + 1) Q = \sum_{k=1}^j \xi_{i_k} + q_{j+1}$. Taking expectations gives $(\mathbb{E}[R_j] + 1) Q = j\mu + \mathbb{E}[q_{j+1}] \in [j\mu, j\mu + Q]$. Since $\mathbb{E}[R_j]=\sum_{k=1}^j \phi_k$ holds by definition, reorganizing yields \eqref{ineq:accounting}.
\end{proof}

In this section, we thus write the DTD expected recourse \eqref{rec:DTD} as:
\begin{equation}\label{eq:recourse_DTD}
    \bar{\mathcal{Q}}^{\text{DTD}}_{p} = \sum_{j=1}^t 2 \phi_j \, \| X_{i_j}\|.
\end{equation}

We now use the failure-count bound of Lemma~\ref{lem:accounting} to bound the DTD expected recourse of a route as a function of its length.

\begin{lemma}\label{lem:recourse_bound}
    Under the assumptions of Section~\ref{sec:asymptotics:setting}, the DTD expected recourse of the route $(0, p, 0)$ formed by the path $p = (i_1, \dots, i_t)$ satisfies:
    \begin{equation}\label{ineq:recourse_bound}
        \bar{\mathcal{Q}}^{\text{DTD}}_p \leq L(p) \cdot \frac{t\mu}{Q}.
    \end{equation}
\end{lemma}

\begin{proof}
    By the triangle inequality, the length $L(p)$ of $(0,p,0)$ is at least $2\|X_{i_j}\|$ for each visited customer $j \in \{1, \dots, t\}$.  Combining with~\eqref{eq:recourse_DTD} and Lemma~\ref{lem:accounting} yields the result:
    \begin{equation*}
\bar{\mathcal{Q}}^{\text{DTD}}_p = \sum_{j=1}^t 2\phi_j \|X_{i_j}\| \leq L(p) \sum_{j=1}^t \phi_j \leq L(p) \cdot \frac{t\mu}{Q}.
\end{equation*}\end{proof}

\subsection{Asymptotic rate of the basic-VRPSD}\label{sec:asymptotics:main}

In this section, we show that the per-customer cost of the basic-VRPSD converges almost surely to $2\mu\bar d/Q$, matching the SDVRP rate. The proof constructs matching lower and upper bounds. In Lemma~\ref{lem:asymptotic_lb}, we show that the realized trajectory of an optimal basic-VRPSD solution, after recourse, forms a feasible SDVRP solution, yielding a lower bound via the SDVRP rate. The upper bound is obtained in Lemma~\ref{lem:asymptotic_ub} by constructing suboptimal basic-VRPSD solutions from optimal TSP tours; their per-customer first-stage cost is asymptotically negligible by the BHH theorem, and we show that their expected recourse cost under the DTD policy asymptotically matches the SDVRP rate.

We introduce notation, then state the result. Let $N = \{1,\dots,n\}$ be the first $n$ customers of the sequence $(X_i,\xi_i)_{i\ge1}$, and let $\mathcal P_n$ denote the set of Hamiltonian paths on $N$. Let $C^{\pi}(p, \xi)$ denote the realized recourse cost for a path $p \in \mathcal{P}_n$, a policy $\pi \in \{\text{DTD}, \text{OR}\}$, and a demand realization $\xi = (\xi_1, \dots, \xi_n)$. Under policy $\pi$, the realized travel distance when executing route $(0, p, 0)$ is $D^{\pi}(p, \xi) := L(p) + C^{\pi}(p, \xi)$, and the expected cost of the route, conditional on the locations, is $V^{\pi}(p; X) := \mathbb{E}_{\xi}\!\left[D^{\pi}(p, \xi) \mid X\right] = L(p) + \bar{\mathcal{Q}}^{\pi}_p$, with the expected recourse cost $\bar{\mathcal{Q}}^{\pi}_p = \mathbb{E}_\xi[C^{\pi}(p, \xi) \mid X]$ as defined in Section~\ref{subsec:ExpRecCost}. We denote by $V^{\pi}_n(X)$ the optimal value of the basic-VRPSD over these $n$ customers under policy $\pi$.

\begin{theorem}\label{thm:asymptotic}
    Under the assumptions of Section~\ref{sec:asymptotics:setting}, each policy $\pi \in \{\text{DTD}, \text{OR}\}$ satisfies:
    \begin{equation}\label{eq:asymptotic_limit}
        \lim_{n \to \infty} \frac{V^{\pi}_n(X)}{n} = \frac{2\mu\bar{d}}{Q} 
        \qquad \text{a.s.}
    \end{equation}
\end{theorem}

\begin{proof} 
The result follows from the sandwich:
\begin{equation}
     \frac{2\mu\bar d}{Q} \leq \liminf_{n \to \infty} \frac{V^{\text{OR}}_n(X)}{n} \leq \limsup_{n \to \infty} \frac{V^{\text{DTD}}_n(X)}{n} \leq \frac{2\mu\bar d}{Q}  \quad \text{a.s.}
\end{equation}
The first and third inequalities are established in Lemmas \ref{lem:asymptotic_lb} and \ref{lem:asymptotic_ub}, respectively. The second inequality holds since $V^{\text{OR}}_n(X) \leq V^{\text{DTD}}_n(X)$ for any realized locations $(X_{i})_{i=1}^n$, as the feasible first-stage solutions are the same regardless of the recourse policy and $\bar{\mathcal{Q}}_p^{\text{OR}} \leq \bar{\mathcal{Q}}_p^{\text{DTD}}$ holds for every path $p$ by Remark~\ref{rem:DTD_restriction_of_OR}.
\end{proof}

\begin{lemma}\label{lem:asymptotic_lb}
Under the assumptions of Section~\ref{sec:asymptotics:setting},
\begin{equation}\label{eq:lower_bound}
    \liminf_{n \to \infty} \frac{V^{\text{OR}}_n(X)}{n} \geq \frac{2\mu\bar d}{Q} \quad \text{a.s.}
\end{equation}
\end{lemma}
\begin{proof}
By \cite[Theorem~2]{yang2000stochastic}, the basic-VRPSD under the OR policy always admits an optimal first-stage solution consisting of a single route. For any $n\geq 1$, we can thus write $V^{\text{OR}}_n(X) = V^{\text{OR}}(p^*_n; X)$ for some path $p^*_n \in \mathcal{P}_n$. Fix a demand realization $\xi = (\xi_i)_{i=1}^n$. Executing the OR policy on $(0, p^*_n, 0)$ produces a realized trajectory that decomposes into depot-to-depot trips $\{(0, p^k, 0)\}_{k=1}^K$, for some $K \in \mathbb{N}$. Setting $y_i^k$ to the amount delivered to customer $i \in N(p^k)$ during trip $k$ yields delivery quantities $\{y_i^k \geq 0 : i \in N(p^k)\}_{k=1}^K$ that fulfill the realized demand of every customer and respect the vehicle capacity on each trip, hence jointly satisfy~\eqref{SDVRP:demands}--\eqref{SDVRP:capacity}. Since the realized VRPSD trajectory thus constitutes a feasible SDVRP solution, its total distance $D^{\text{OR}}(p^*_n, \xi)$ is no less than the SDVRP optimum $V^{\text{SDVRP}}_n(X, \xi)$. From there, applying the lower bound from \cite[Section 7, Lemma~1]{haimovich1985bounds} on the SDVRP optimum yields:
\begin{equation}\label{eq:radial}
D^{\text{OR}}(p^*_n, \xi) \geq V^{\text{SDVRP}}_n(X, \xi) \geq \frac{2}{Q}\sum_{i=1}^n \xi_i \|X_i\|.
\end{equation}
Since~\eqref{eq:radial} holds for every realization~$\xi$, taking 
conditional expectation given~$X$ and using $\mathbb{E}[\xi_i \mid X] = \mu$, which holds by independence of locations and demands, yields, for every $n \geq 1$,
\begin{equation}\label{eq:lb_V^OR_n}
    V^{\text{OR}}_n(X) \geq \frac{2\mu}{Q}\sum_{i=1}^n \|X_i\|
    \qquad \text{a.s.}
\end{equation}

By the strong law of large numbers, $\frac{1}{n}\sum_{i=1}^n \|X_i\| \to \bar{d}$ a.s., where $\bar{d} < \infty$ by boundedness of $\mathcal{R}$. Dividing \eqref{eq:lb_V^OR_n} by $n$ and passing to the $\liminf$ therefore gives \eqref{eq:lower_bound}.
\end{proof}

\begin{lemma}\label{lem:asymptotic_ub}
Under the assumptions of Section~\ref{sec:asymptotics:setting},
\begin{equation}\label{eq:upper_bound}
    \limsup_{n \to \infty} \frac{V^{\text{DTD}}_n(X)}{n} \leq \frac{2\mu\bar d}{Q} \quad \text{a.s.}
\end{equation}
\end{lemma}
\begin{proof}
For each $n \geq 1$, let $p^{\text{TSP}}_n = (i_1, \dots, i_n) \in 
\mathcal{P}_n$ be a Hamiltonian path on $N$ whose associated cycle 
$(0, p^{\text{TSP}}_n, 0)$ is a shortest TSP tour through $N \cup \{0\}$. 
Since the single-route solution $\{(0, p^{\text{TSP}}_n, 0)\}$ is feasible 
for the basic-VRPSD, $V^{\text{DTD}}_n(X) \leq V^{\text{DTD}}(p^{\text{TSP}}_n; X) = L(p^{\text{TSP}}_n) + \bar{\mathcal{Q}}^{\text{DTD}}_{p^{\text{TSP}}_n}$, 
so it suffices to show that:
\begin{equation}\label{eq:upper_goal}
    \lim_{n \to \infty} \frac{ L(p^{\text{TSP}}_n) + \bar{\mathcal{Q}}^{\text{DTD}}_{p^{\text{TSP}}_n} }{n} 
     =  \frac{2\mu\bar d}{Q} \quad \text{a.s.}
\end{equation}
By \eqref{eq:recourse_DTD}, the expected recourse cost can be expressed as $\bar{\mathcal{Q}}^{\text{DTD}}_{p^{\text{TSP}}_n} = \sum_{j=1}^n 2\phi_j \|X_{i_j}\|$. Writing $\phi_j = \mu/Q + (\phi_j - \mu/Q)$, we decompose $\bar{\mathcal{Q}}^{\text{DTD}}_{p^{\text{TSP}}_n} = A_n + B_n$, where:
\begin{equation}\label{eq:DTD_TSP_decomp}
    A_n := \frac{2\mu}{Q} \sum_{j=1}^n \|X_{i_j}\|
    \qquad \text{and} \qquad 
    B_n := 2 \sum_{j=1}^n (\phi_j - \mu/Q) \|X_{i_j}\|.
\end{equation}

We now study the sequences of random variables $(A_n/n)_{n \geq 1}$ and $(B_n/n)_{n \geq 1}$.

\emph{Leading term.} Since $(X_{i_1}, \dots, X_{i_n})$ is a permutation 
of $(X_1, \dots, X_n)$, $A_n$ is invariant under the choice of tour 
ordering, and the strong law of large numbers applied to the i.i.d.\ sequence $(\|X_i\|)_{i \geq 1}$ 
gives
\[
    \frac{A_n}{n} = \frac{2\mu}{Q} \cdot \frac{1}{n} \sum_{k=1}^n \|X_k\| 
    \longrightarrow \frac{2\mu\bar d}{Q} \quad \text{a.s.}
\]

\emph{Remainder.} Fix $n \geq 1$. Summation by parts on the sequences $(\phi_j - \mu/Q)_{j \geq 1}$ 
and $(\|X_{i_j}\|)_{j \geq 1}$, the former with partial sums $S_j := \sum_{k=1}^j \phi_k - j\mu/Q$, gives:
\[
    \sum_{j=1}^n (\phi_j - \mu/Q) \|X_{i_j}\| 
    = S_n \|X_{i_n}\| - \sum_{j=1}^{n-1} S_j \bigl(\|X_{i_{j+1}}\| - \|X_{i_j}\|\bigr).
\]
By Lemma~\ref{lem:accounting}, $|S_j| \leq 1$ uniformly in $j$, so the triangle inequality gives:
\[
    |B_n| = \Big| 2 S_n \|X_{i_n}\| - 2 \sum_{j=1}^{n-1} S_j \bigl(\|X_{i_{j+1}}\| - \|X_{i_j}\|\bigr) \Big| \leq 2\|X_{i_n}\| + 2\sum_{j=1}^{n-1} \|X_{i_{j+1}} - X_{i_j}\|.
\]
Since $X_{i_n} \in \mathcal{R}$ and $\sum_{j=1}^{n-1} \|X_{i_{j+1}} - X_{i_j}\|$ is bounded by the length $L(p^{\text{TSP}}_n)$ of the TSP tour $(0, p^{\text{TSP}}_n, 0)$, we obtain $|B_n| \leq 2\sup_{x \in \mathcal{R}}\|x\| + 2 L(p^{\text{TSP}}_n)$. The first term is finite by boundedness of $\mathcal{R}$, and $L(p^{\text{TSP}}_n) = O(\sqrt{n})$ a.s.\ by the BHH theorem~\cite{beardwood1959shortest}, so $L(p^{\text{TSP}}_n)/n \to 0$ a.s. We conclude that $B_n/n \to 0$ a.s.

\emph{Conclusion.} Combining $A_n/n \to 2\mu\bar d/Q$ a.s.\ and $B_n/n \to 0$ a.s.\ gives $\bar{\mathcal{Q}}^{\text{DTD}}_{p^{\text{TSP}}_n}/n \to 2\mu\bar d/Q$ a.s.\ Together with $L(p^{\text{TSP}}_n)/n \to 0$ a.s., this establishes~\eqref{eq:upper_goal}.
\end{proof}

\subsection{Asymptotic rate of the VRPSD with expected capacity constraints}\label{sec:asymptotics:ECC}

Under the ECCs, every route $(0,p,0)$ can serve at most $\kappa = \lfloor Q/\mu \rfloor$ customers. For feasibility, assume that $\mu \leq Q$, so that each vehicle can serve at least one customer, and denote by $V^{E, \pi}_n(X)$ the optimal value of the ECC-VRPSD for the first $n$ customers of the sequence $(X_i, \xi_i)_{i \geq 1}$ under policy $\pi \in \{\text{DTD}, \text{OR}\}$. In Theorem~\ref{thm:ECC_rate}, we show that the per-customer cost of the ECC-VRPSD lies asymptotically between $\alpha$ and $\alpha + 1$ times the SDVRP rate $2\mu\bar{d}/Q$. The proof parallels that of Theorem~\ref{thm:asymptotic} in structure, but invokes the CVRP rate of Theorem~\ref{thm:haimovich_cvrp} in place of the SDVRP rate for the lower bound, and constructs upper-bounding ECC-feasible solutions from optimal CVRP solutions rather than from TSP tours.

\begin{theorem}\label{thm:ECC_rate}
    Under the assumptions of Section~\ref{sec:asymptotics:setting} with $\mu \leq Q$, each policy $\pi \in \{\text{DTD}, \text{OR}\}$ satisfies:
    \begin{equation}\label{eq:ECC_rate}
        \alpha \cdot \frac{2\mu\bar d}{Q} \leq \liminf_{n \to \infty} \frac{V^{E, \pi}_n(X)}{n} \leq \limsup_{n \to \infty} \frac{V^{E, \pi}_n(X)}{n} \leq (\alpha + 1) \cdot \frac{2\mu\bar d}{Q} \quad \text{a.s.}
    \end{equation}
\end{theorem}

\begin{proof}
    The lower bound uses only non-negativity of recourse costs and is thus policy-independent. For the upper bound, it suffices to establish the result under DTD, since the feasible first-stage solutions are the same regardless of the recourse policy and $\bar{\mathcal{Q}}_p^{\text{OR}} \leq \bar{\mathcal{Q}}_p^{\text{DTD}}$ holds for every path $p$ by Remark~\ref{rem:DTD_restriction_of_OR}.

    \emph{Lower bound.} Let $N = \{1, \dots, n\}$ be the first $n$ customers of the sequence $(X_i, \xi_i)_{i \geq 1}$. Any first-stage solution under the ECCs partitions $N$ into routes of at most $\kappa$ customers each, and is thus feasible for the CVRP of Section \ref{sec:asymptotics:sdvrp} for locations $(X_i)_{i = 1}^n$. Together with the non-negativity of the recourse costs under either policy $\pi \in \{\text{DTD}, \text{OR}\}$, this gives $V^{E, \pi}_n(X) \geq V^{\text{CVRP}}_n(X)$. The first inequality of \eqref{eq:ECC_rate} then follows from Theorem~\ref{thm:haimovich_cvrp}.

    \emph{Upper bound.} Consider again the first $n$ customers of the sequence $(X_i, \xi_i)_{i \geq 1}$, and let $\{(0, p^k, 0)\}_{k=1}^K$ be an optimal CVRP solution for locations $(X_i)_{i=1}^n$, with $p^k = (i^k_1, \dots, i^k_{t_k})$ and $t_k \leq \kappa$. Since this solution satisfies the ECCs, we have:
    \begin{equation}\label{eq:CVRP-DTD}
        V^{E, \text{DTD}}_n(X) \leq V^{\text{CVRP}}_n(X) + \sum_{k=1}^K \bar{\mathcal{Q}}^{\text{DTD}}_{p^k}.
    \end{equation}
    Applying Lemma~\ref{lem:recourse_bound} to each route $p^k$ and using $t_k\mu/Q \leq \kappa\mu/Q$ gives:
    \[
        \sum_{k=1}^K \bar{\mathcal{Q}}^{\text{DTD}}_{p^k} \leq \frac{\kappa\mu}{Q} \sum_{k=1}^K L(p^k) = \frac{\kappa\mu}{Q} \, V^{\text{CVRP}}_n(X),
    \]
    so $V^{E, \text{DTD}}_n(X) \leq V^{\text{CVRP}}_n(X) \cdot (1 + \kappa\mu/Q)$. Applying Theorem~\ref{thm:haimovich_cvrp} and using $\kappa\mu/Q = 1/\alpha$ yields the last inequality of \eqref{eq:ECC_rate} for $\pi = \text{DTD}$.
\end{proof}

In Proposition~\ref{prop:ECC_tightness}, we exhibit families of distributions of locations and demands that approach each bound of Theorem~\ref{thm:ECC_rate} arbitrarily closely, for any bin-packing constant $\alpha \in [1,2)$.

\begin{proposition}\label{prop:ECC_tightness}
    Consider the setup of Section~\ref{sec:asymptotics:setting}, and fix $\mu \leq Q$ with associated per-route customer cap $\kappa = \lfloor Q/\mu \rfloor$ and bin-packing constant $\alpha = Q/(\mu\kappa)$. Let customers be colocated at a point $x^* \in \mathbb{R}^2$ with $\|x^*\| > 0$, so $X_i = x^*$ with probability one, and $\bar d = \|x^*\|$. Under each policy $\pi \in \{\text{DTD}, \text{OR}\}$:
    \begin{enumerate}
        \item[(i)] Under deterministic demands $\xi_i = \mu$,
        \begin{equation}\label{eq:tightness_lb}
            \lim_{n \to \infty} \frac{V^{E, \pi}_n(X)}{n} = \alpha \cdot \frac{2\mu\bar d}{Q} \qquad \text{a.s.}
        \end{equation}
        \item[(ii)] For every $\varepsilon \in (0, Q/\kappa]$, under i.i.d.\ demands $\xi_i \sim (Q+\varepsilon) \cdot \mathrm{Bern}\bigl(\mu/(Q+\varepsilon)\bigr)$,
        \begin{equation}\label{eq:tightness_ub}
            \lim_{n \to \infty} \frac{V^{E, \pi}_n(X)}{n} = \alpha \cdot \frac{2\mu\bar d}{Q} + \frac{2\mu\bar d}{Q + \varepsilon} \qquad \text{a.s.},
        \end{equation}
        approaching $(\alpha + 1) \cdot 2\mu\bar d/Q$ as $\varepsilon \downarrow 0$.
    \end{enumerate}
\end{proposition}

\begin{proof}
    With customers colocated at $x^*$, any route has length $2\|x^*\|$ and any restocking trip costs $2\|x^*\|$.

    \paragraph{(i)} Cumulative demand on any ECC-feasible route of $t \leq \kappa$ customers is $t\mu \leq Q$, so no restocking is required. As the feasible solutions of the ECC-VRPSD and CVRP coincide, $V^{E, \pi}_n(X) = V^{\text{CVRP}}_n(X)$, and~\eqref{eq:tightness_lb} follows from Theorem~\ref{thm:haimovich_cvrp}.

    \paragraph{(ii)} Consider any ECC-feasible first-stage solution $\{(0, p^k, 0)\}_{k=1}^{K_n}$ with $p^k = (i^k_1, \dots, i^k_{t_k})$, $t_k \leq \kappa$. For each route $p^k$, let $M_k$ denote the number of successes ($\xi_{i^k_j} = Q+\varepsilon$) along $p^k$, and let $R^\pi_k$ denote the number of restocking trips along $p^k$ under policy $\pi \in \{\text{DTD}, \text{OR}\}$. Since each success exceeds the vehicle's capacity $Q$, at least one restock is required per success, giving $R^\pi_k \geq M_k$ pathwise under any policy. Conversely, $\varepsilon \leq Q/\kappa$ ensures that before the $m$-th success along a route, $m \in \{1,\dots, \kappa\}$, the residual capacity is $Q - (m-1)\varepsilon \geq Q - (\kappa-1)\varepsilon \geq \varepsilon$, so each success can be served with exactly one restock, hence $R^{\text{DTD}}_k = M_k$. Since all restocks cost the same and the OR policy is cost-minimizing, $\mathbb{E}[R^{\text{OR}}_k] \leq \mathbb{E}[R^{\text{DTD}}_k] = t_k \mu/(Q+\varepsilon)$; combined with $\mathbb{E}[R^\pi_k] \geq \mathbb{E}[M_k] = t_k \mu/(Q+\varepsilon)$, both policies satisfy $\mathbb{E}[R^\pi_k] = t_k \mu/(Q+\varepsilon)$. The expected total cost of a solution comprising $K_n$ routes is thus:
\[
    \sum_{k=1}^{K_n} \bigl(2\|x^*\| + \mathbb{E}[R^\pi_k] \cdot 2\|x^*\|\bigr) = 2 K_n \|x^*\| + \frac{2\mu \|x^*\| n}{Q+\varepsilon},
\]
and is minimized at the smallest ECC-feasible number of routes, which is $K_n = \lceil n/\kappa \rceil$. Dividing by $n$ and using $\lceil n/\kappa \rceil/n \to 1/\kappa$ establishes \eqref{eq:tightness_ub}:
\[
    \lim_{n \to \infty} \frac{V^{E, \pi}_n(X)}{n} = \frac{2\|x^*\|}{\kappa} + \frac{2\mu\|x^*\|}{Q+\varepsilon} = \alpha \cdot \frac{2\mu\bar d}{Q} + \frac{2\mu\bar d}{Q+\varepsilon}.
\]
\end{proof}

Theorems~\ref{thm:asymptotic} and~\ref{thm:ECC_rate} together characterize how imposing the ECCs inflates the per-customer cost in the asymptotic regime. Combined with Proposition~\ref{prop:ECC_tightness}, this yields a tight value for the worst-case asymptotic ratio over admissible joint distributions of locations and demands.

\begin{corollary}\label{cor:ECC_worst_case}
Let $\mathcal{D}$ denote the set of joint distributions of $(X,\xi)$ satisfying the assumptions of Section~\ref{sec:asymptotics:setting} with $\mu \leq Q$. Each policy $\pi \in \{\text{DTD}, \text{OR}\}$ satisfies:
\begin{equation}\label{eq:ECC_worst_case}
    \sup_{P \in \mathcal{D}} \, \limsup_{n \to \infty} \frac{V^{E,\pi}_n(X)}{V^\pi_n(X)} = 3,
\end{equation}
where, for each $P \in \mathcal{D}$, the inner limit supremum is taken in the a.s. sense along an i.i.d.\ sequence $(X_i,\xi_i)_{i\geq1}$ with law $P$.
\end{corollary}

\begin{proof}
Fix a distribution $P \in \mathcal{D}$, and let $\alpha \in [1,2)$ be the resulting bin-packing constant. By Theorems~\ref{thm:asymptotic} and~\ref{thm:ECC_rate},
\[
    \limsup_{n \to \infty} \frac{V^{E,\pi}_n(X)}{V^\pi_n(X)} \le \alpha + 1 < 3
    \qquad \text{a.s.}
\]
Hence the supremum in~\eqref{eq:ECC_worst_case} is at most $3$. This bound is approached by the colocated locations and Bernoulli demands of Proposition~\ref{prop:ECC_tightness}(ii) with parameters $(\mu,\varepsilon)$: combining the limit therein with Theorem~\ref{thm:asymptotic} gives:
\[
\lim_{n\to\infty}\frac{V^{E,\pi}_n(X)}{V^\pi_n(X)}
=
\alpha + \frac{Q}{Q+\varepsilon}
\qquad \text{a.s.},
\]
where $\alpha = Q/(\mu \lfloor Q/\mu \rfloor)$. Taking $\mu \downarrow Q/2$ yields $\alpha \uparrow 2$, and then letting $\varepsilon \downarrow 0$ yields $\alpha + Q/(Q+\varepsilon) \uparrow 3$.
\end{proof}

With Corollary~\ref{cor:ECC_worst_case}, we partially resolve an open question posed by \cite{hoogendoorn2025evaluation}. Following the notation and assumptions of Section \ref{sec:ProbForm}, call an \emph{instance} of the VRPSD a choice of graph $(N_0,E)$, traveling costs $\{c_e\}_{e \in E}$, vehicle capacity $Q$, and demand distributions $\{\xi_i\}_{i \in N}$. Denote by $\mathcal{I}^E$ the set of all such instances where each customer's expected demand does not exceed the vehicle capacity, i.e., $\max_{i \in N}\{\mu_i\} \leq Q$, so that the ECC-VRPSD admits a feasible solution. For an instance $I \in \mathcal{I}^E$, denote by $V^{E,\pi}(I)$ and $V^{\pi}(I)$ the optimal values of the ECC and basic variants under policy $\pi$, and let $r^{\pi}(I) := V^{E,\pi}(I) / V^{\pi}(I)$ be the ratio between these optimal values. \cite{hoogendoorn2025evaluation} investigate the worst-case ratio $\sup_{I \in \mathcal{I}^E} r^{\pi}(I)$ and, in their Theorem 5, exhibit a family of instances $(I_n)_{n \geq 1}$ with $n$ colocated customers for which $\lim_{n \to \infty} r^{\pi}(I_n) = 3$ for both recourse policies $\pi \in \{\text{DTD}, \text{OR}\}$, establishing the lower bound $\sup_{I \in \mathcal{I}^E} r^{\pi}(I) \geq 3$. They establish no finite upper bound on this supremum, leaving open the possibility for it to be strictly greater than 3 or even infinite, and identify the determination of the worst-case ratio as an open question.

Corollary~\ref{cor:ECC_worst_case} resolves this question in our asymptotic regime: for customers drawn i.i.d.\ in a compact region of the Euclidean plane, with each customer's demand independent of its location, the worst-case limiting ratio is exactly~$3$. The question of whether $\sup_{I \in \mathcal{I}^E} r^{\pi}(I)=3$ holds for arbitrary instances outside the asymptotic i.i.d.\ regime remains open.
}

\section{Conclusion} \label{sec:Conc}

In this paper, we developed a comprehensive analysis of the DL-shaped method for the VRPSD, showing that superadditivity of the recourse function under path concatenation is the structural property that characterizes its validity. We established that this property holds under suitable conditions for the DTD policy, and without further assumptions for the OR policy. We then introduced the edge-set cuts, a new family of valid inequalities that generalize the original DL-shaped cuts \rev{and provide structural advantages over existing valid inequalities for the VRPSD}. Our implementation of the DL-shaped method under the OR policy \rev{achieves state-of-the-art results on instances comprising a small number of long routes, the regime in which B\&C is the leading exact approach}. \rev{We complemented these algorithmic contributions with an asymptotic analysis of the optimal value of the VRPSD. We established that the VRPSD and the SDVRP have the same asymptotic per-customer cost, and characterized the worst-case impact of imposing the expected capacity constraints.}

Future work on the DL-shaped method could focus on developing \rev{other edge-selection strategies for the E-cuts, possibly building on the relation to partial route-split inequalities established in Theorem~\ref{thm:ecut-prs}. A more ambitious direction is the development of hybrid decomposition strategies leveraging the complementarity of the B\&C and BP\&C methods}. Our results for the single-route variant of the VRPSD could also motivate the development of DL-shaped methods for stochastic variants of the TSP. \rev{Finally, relaxing the structural assumptions of our framework, by investigating the extent to which superadditivity persists under demand correlation, or by extending the superadditivity property to non-route-based recourse paradigms, defines a natural direction for broadening the applicability of the DL-shaped methodology.}

\section*{Acknowledgement} The authors gratefully acknowledge the support of the Natural Sciences and Engineering Research Council of Canada (NSERC) [grants 2021-04037 and 2019-00094]. We thank the Digital Research Alliance of Canada for providing high-performance parallel computing facilities.

\bibliographystyle{informs2014}
\bibliography{refs_abbv}

\end{document}